\numberwithin{equation}{section}
\newtheorem{theorem}{Theorem}[section]
\newtheorem{lemma}[theorem]{Lemma}
\newtheorem{corollary}[theorem]{Corollary}
\newtheorem{remark}[theorem]{Remark}
\newtheorem{definition}[theorem]{Definition}
\newtheorem{proposition}[theorem]{Proposition}
\newtheorem{example}[theorem]{Example}
\begin{document}
\title[Zeta-function residues]{Extended zeta-function residues on principal ideals}

\author[Tian]{Yongqiang Tian}
\address{School of Mathematics and Statistics, Central South University, Changsha 410075, China}
\email{tianyongqiang@csu.edu.cn}

\author[Usachev]{Alexandr Usachev}
\address{School of Mathematics and Statistics, Central South University, Changsha 410075, China}
\email{dr.alex.usachev@gmail.com}

\subjclass[2010]{Primary: 46L53; Secondary: 60G42}

\keywords{Dixmier trace, zeta-function residue, measurability, Lidskii formula}

\thanks{Y. T. was supported by the Fundamental Research Funds for the Central University of Central South University (No.2022ZZTS0605). A. U. was supported by the Theoretical Physics and Mathematics Advancement Foundation ``BASIS''}

\begin{abstract}
We study extended zeta-function residues on principal ideals of compact operators and their connections with Dixmier traces. We establish a Lidskii-type formula for continuous singular traces on these ideals. Using this formula, we obtain a necessary and sufficient conditions for an arbitrary operator being Dixmier measurable. These conditions are expressed in terms of eigenvalues of an operator and an asymptotic of its zeta-function. 
\end{abstract}

\maketitle

\section{Introduction}
Let $\mathcal{L}_\infty(H)$ (respectively, $\mathcal{C}_0(H)$) denote the algebra of all bounded (respectively, compact) linear operators  on a complex separable Hilbert
space $H$. By $\mu(T):=\{\mu(n,T)\}_{n=0}^{\infty}$ we denote the sequence of singular values of an operator $T\in\mathcal{C}_0(H).$
Define the weak trace class ideal $\mathcal{L}_{1,\infty}$ and the Dixmier-Macaev ideal $\mathcal{M}_{1,\infty}$, respectively, by setting
\begin{equation*}
  \mathcal{L}_{1,\infty}:=\Big\{ T\in \mathcal{C}_0(H):~\mu(n,T)=O\left((n+1)^{-1}\right)\Big\}
\end{equation*}
and
\begin{equation*}
  \mathcal{M}_{1,\infty}:=\left\{ T\in \mathcal{C}_0(H):~\sum_{k=0}^n\mu(k,T)=O\big(\log(n+2)\big)\right\}.
\end{equation*}
In 1966, J. Dixmier constructed the first example of singular (that is non-normal) trace \cite{Dixmier1966}. For a certain state $\omega$ on the algebra $\ell_\infty$ of all bounded sequences, he proved that the weight
\begin{equation*}
 \mathrm{Tr}_{\omega}(T):=\omega\left(n\mapsto\frac{1}{\log(n+2)}\sum_{k=0}^n\mu(k,T)\right),\quad 0\leq T\in \mathcal{M}_{1,\infty}
\end{equation*}
extends by linearity to a trace on the whole ideal $\mathcal{M}_{1,\infty}$ (in fact, he proved this in more general case of Lorentz ideals (see Definition \ref{lorentz_def} below)). Nowadays theses traces are called Dixmier traces. Being an analogue of classical integration, Dixmier traces play the key role in Connes' noncommutative calculus and geometry, see \cite{Connes1994}, \cite{Connes1988},\cite{LSZ2013} and \cite{G01}.

The computation of Dixmier traces can be an involved task. One reason for this is the dependence of Dixmier traces on the generating state $\omega$. The classical result of A. Connes states that all Dixmier traces coincide on classical pseudo-differential operators of certain order \cite{Connes1988}. More recently, it turned out that this is not generally true without the classicality condition (see, e.g. \cite[Corollary 2.5.7]{LMSZ_book}). Now, it is also known that there are large classes of operators on which Dixmier traces do not coincide (e.g. H\"ormander-Weyl pseudo-differential operators \cite[Corollary 5.7]{GS2014}, Hankel operators on Hardy spaces \cite{GU2020_2}, commutators with non-smooth functions \cite{Gimperlein_Goffeng, GU2021}, etc.) The other (and more substantial) reason is that for a generic operator appearing from geometrical or physical problems one generally does not have an access to its singular values. There were several techniques developed to overcome this obstacle. They aimed to produce formulae which express Dixmier (and more generally, singular) traces in terms of:

- eigenvalues of an operator (Lidskii-type formulae);

- asymptotic of operator zeta-function;

- asymptotic of partition function;

- expectation values.

In this paper we deal with the first two techniques. We discuss them in more details.

Let $\mathcal{L}_{1}$ be the set of trace class operators on the Hilbert space $H$. In 1959, V. Lidskii established a formula expressing the classical trace in terms of the eigenvalues of the operator \cite{Lidskii1959} (see also \cite{Power1983}):
\begin{equation*}
	\mathrm{Tr}(T)=\sum_{j=0}^\infty\lambda(j,T),\quad  T\in\mathcal{L}_1.
\end{equation*}
Formulating traces in terms of eigenvalues has been shown achievable on more general ideals. For an ideal $\mathcal{I}$ in $\mathcal{L}_\infty(H)$ and $T\in\mathcal{I}$, regarding $\lambda(T)$ as a diagonal operator with respect to some fixed orthonormal basis in $H,$ a remarkable result in \cite{spectral} states that: the equality $\varphi(T)=\varphi(\lambda(T)),~T\in\mathcal{I}$ holds for all traces $\varphi$ on $\mathcal{I}$ if and only if $\mathcal{I}$ is \emph{logarithmically closed} (see Definition \ref{log}).
Although all ideals which appear in this paper are logarithmically closed, a trace, say Dixmier trace, is usually realized by a concrete formula involving singular values. So it is still worth rewriting the trace formula by using eigenvalues. There were several attempts recorded in the literature (see, e.g. \cite{AS2005, SSZ2010, SUZ2014}).

Some more recent advances in the theory of singular traces were based on a dyadic approach to traces discovered by A. Pietsch \cite{Pietsch1} for ideals in $\mathcal L_\infty$ and later extended to the setting of semifinite von Neumann algebras \cite{LU2023}. A modification of this approach established a bijection between all traces on $\mathcal L_{1,\infty}$ and all shift-invariant functionals on the space $\ell_\infty$ of all bounded sequences \cite{SSUZ2015}. This result allowed to resolve some open problems in the theory. Using this bijection the following Lidskii-type formula was established in \cite{SSUZ2015}:
\begin{equation}\label{dyadic}
	\varphi(T)=\theta\left(\frac{1}{\log 2}\left\{\sum_{j=2^n-1}^{2^{n+1}-2} \lambda(j, T)\right\}_{n \geq 0}\right),\quad T\in\mathcal{L}_{1,\infty},
\end{equation}
where $\varphi$ is an arbitrary trace on $\mathcal{L}_{1,\infty}$ and $\theta$ is the shift-invariant functional corresponding to it. In Section 3 we prove the first main result of this paper (Theorem \ref{Lid} below). It extends formula \eqref{dyadic} to cover more general eigenvalue asymptotics. Besides its general interest, we use this analogue of Lidskii's formula in Section 5 to describe operators on which various classes of singular traces coincide.

In Sections 4 and 5 we study the relation between Dixmier traces and the operator zeta-functions. For a positive operator $T\in \mathcal{M}_{1,\infty}$ the function
\begin{equation}\label{zzeta}
	t\mapsto \mathrm{Tr}(T^{1+1/t})
\end{equation}
is well-defined for $t>0$ and is called the operator zeta-function.
 Although it is not evident that there is any connection between the behaviour of zeta-function and Dixmier traces, the following surprising result was established by A. Connes and H. Moscovici \cite[Proposition~A.4]{CM1995}.
\begin{theorem}\rm{}\label{equi}
 Let $0\leq T\in\mathcal{L}_{1,\infty},$ the equality $\mathrm{Tr}_\omega(T)=c$ holds for all Dixmier traces if and only if
 $$ \lim_{t\to +\infty}\frac{1}{t}\mathrm{Tr}(T^{1+1/t}) = c.$$
 \end{theorem}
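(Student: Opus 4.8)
The plan is to interpose between the two conditions a single ordinary-limit statement, namely
\begin{equation*}
  \lim_{n\to\infty}\frac{1}{\log(n+2)}\sum_{k=0}^{n}\mu(k,T)=c,
  \tag{$\ast$}
\end{equation*}
and to establish the two resulting equivalences separately. Throughout, put $s=1+1/t$, so that $s-1=1/t\to 0^{+}$ as $t\to+\infty$ and $\tfrac{1}{t}\,\mathrm{Tr}(T^{1+1/t})=(s-1)\,\mathrm{Tr}(T^{s})$; since $T\in\mathcal{L}_{1,\infty}$ we have $\mu(n,T^{s})=\mu(n,T)^{s}=O\big((n+1)^{-s}\big)$, hence $T^{s}\in\mathcal{L}_{1}$ and the zeta-function $s\mapsto\mathrm{Tr}(T^{s})$ is finite for $s>1$.

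First I would show that the statement ``$\mathrm{Tr}_{\omega}(T)=c$ for every Dixmier trace'' is equivalent to $(\ast)$. By definition $\mathrm{Tr}_{\omega}(T)=\omega\big(n\mapsto a_{n}\big)$ with $a_{n}:=\frac{1}{\log(n+2)}\sum_{k=0}^{n}\mu(k,T)$, and since any admissible state $\omega$ extends the ordinary limit on convergent sequences, $(\ast)$ immediately forces $\mathrm{Tr}_{\omega}(T)=c$ for all of them. For the converse I would argue by contraposition: if $(a_{n})$ does not converge to $c$, then either it converges to some $c'\neq c$ (and then all Dixmier traces equal $c'$), or $\liminf_{n}a_{n}<\limsup_{n}a_{n}$; in the latter case one exploits that $(a_{n})$ is slowly oscillating --- a short computation using $\mu(n,T)=O((n+1)^{-1})$ gives $a_{n+1}-a_{n}=O(1/n)$ --- to produce two dilation-invariant states realizing $\liminf_{n}a_{n}$ and $\limsup_{n}a_{n}$, hence two Dixmier traces with distinct values. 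This construction, together with the fact that on $\mathcal{L}_{1,\infty}$ the standard variants of Dixmier measurability coincide, is by now classical; see the references in the Introduction.

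Next I would prove that $(\ast)$ is equivalent to $\lim_{t\to+\infty}\tfrac{1}{t}\mathrm{Tr}(T^{1+1/t})=c$; this Tauberian step is the core of the theorem and the one place where $T\in\mathcal{L}_{1,\infty}$ is genuinely used. Regard $x\mapsto\mu(x,T)$ as a nonincreasing step function on $[0,\infty)$ and set $g(u):=e^{u}\mu(e^{u},T)$ for $u\in\mathbb{R}$; then $T\in\mathcal{L}_{1,\infty}$ is exactly the boundedness of $g$, say $0\le g\le M$. The substitution $x=e^{u}$ yields
\begin{equation*}
  \mathrm{Tr}(T^{s})=\int_{0}^{\infty}\mu(x,T)^{s}\,dx=\int_{-\infty}^{\infty}g(u)^{s}e^{-(s-1)u}\,du
\end{equation*}
and
\begin{equation*}
  \sum_{k=0}^{n}\mu(k,T)=\int_{0}^{n+1}\mu(x,T)\,dx=\mu(0,T)+\int_{0}^{\log(n+1)}g(u)\,du .
\end{equation*}
Because $g$ is bounded, $g^{s}\to g$ uniformly on $[0,M]$ as $s\to 1^{+}$, while the contribution of the range $u<0$ to the first integral is $O(1)$; hence $(s-1)\big|\mathrm{Tr}(T^{s})-\int_{0}^{\infty}g(u)e^{-(s-1)u}\,du\big|\le\|g^{s}-g\|_{\infty}+o(1)\to 0$, so $\tfrac{1}{t}\mathrm{Tr}(T^{1+1/t})$ has the same limiting behaviour as $(s-1)\int_{0}^{\infty}g(u)e^{-(s-1)u}\,du$. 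Now $G(U):=\int_{0}^{U}g(u)\,du$ is nondecreasing with Laplace transform finite at every positive argument, so Karamata's Tauberian theorem gives
\begin{equation*}
  (s-1)\int_{0}^{\infty}e^{-(s-1)u}\,dG(u)\xrightarrow[\;s\to 1^{+}\;]{}c
  \qquad\Longleftrightarrow\qquad
  G(U)\sim cU\ \ (U\to\infty),
\end{equation*}
and, since $G$ is monotone and $\log(n+2)/\log(n+1)\to 1$, the right-hand side unwinds to precisely $(\ast)$. Combining the two equivalences proves the theorem.

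The step I expect to be the real obstacle is the Tauberian one: one must justify replacing the analytic quantity $\mathrm{Tr}(T^{s})$ by the ``linearized'' partial sums $\sum_{k\le n}\mu(k,T)$, and this is exactly where the hypothesis $\mu(n,T)=O\big((n+1)^{-1}\big)$ cannot be dropped --- without it the passage from $g^{s}$ to $g$ fails, and the equivalence already breaks down on $\mathcal{M}_{1,\infty}$. The remaining points (the behaviour of $g$ near $u=-\infty$, i.e.\ the role of the largest singular values, and the $\log(n+2)$ versus $\log(n+1)$ normalization) affect only lower-order terms and leave all the limits unchanged; dealing with them is routine.
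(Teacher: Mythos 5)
Your argument is essentially correct. Note first that the paper does not actually prove Theorem \ref{equi} --- it is cited from Connes--Moscovici \cite[Proposition A.4]{CM1995}, and the paper's own machinery (Theorem \ref{biject}, the Pietsch operator and dyadic block decomposition, the Banach-limit representation of $\zeta_\gamma$ in Lemma \ref{blmt0} and Theorem \ref{blmt}, the Abel-summation criterion of Theorem \ref{summation}, and the complex-sequence Tauberian Lemma \ref{tauber}) is built to prove the far more general Theorem \ref{main}, which recovers Theorem \ref{equi} as the special case $V=I$, $k=0$. So the right comparison is with that route.

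Your Tauberian step is the same basic mechanism in a more elementary packaging: you apply Karamata directly to the Laplace--Stieltjes transform of $G(U)=\int_0^U e^u\mu(e^u,T)\,du$, replacing $\mathrm{Tr}(T^s)$ by $\int_0^\infty g(u)e^{-(s-1)u}\,du$ via the uniform convergence $a^s\to a$ on $[0,M]$, which is exactly where $\mu(n,T)=O((n+1)^{-1})$ is used. The paper instead passes through dyadic blocks $a_n=\sum_{j=2^n-1}^{2^{n+1}-2}\lambda(j,T)$, rewrites zeta residues as Banach limits acting on these blocks, reduces measurability to existence of an Abel (power-series) limit (Theorem \ref{summation}), and then invokes a discrete Tauberian theorem for complex sequences (Lemma \ref{tauber}). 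Your route is shorter and self-contained for positive $T$ and $k=0$; the paper's route is what makes the extension to $VT$ and $\mathcal I_k$ go through, since the dyadic/Lidskii step is what lets one handle non-positive eigenvalue sequences.

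Two minor remarks on your ``step 2.'' First, the converse direction (Dixmier-measurability $\Rightarrow$ convergence of $a_n$) is cleaner via the fact, used in the paper after Definition \ref{dix} and due to \cite[Theorem~17]{SS2013}, that on a principal ideal $\mathcal I_g$ \emph{every} extended limit $\omega\in EL(\mathbb N)$ --- not only dilation-invariant ones --- already defines a Dixmier trace; then the equivalence with $(\ast)$ is immediate from the elementary fact that extended limits attain every value in $[\liminf a_n,\limsup a_n]$, and no slow-oscillation argument is needed. Your slow-oscillation route is correct but invokes a harder construction. Second, the statement you call ``classical'' --- that Dixmier measurability is equivalent to ordinary convergence of the log-averaged partial sums --- is precisely the paper's Theorem \ref{Dixmier measurable thm}; citing it directly would make your write-up tighter.
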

 That is, the common value of Dixmier traces on a operator $T$ can be read from the asymptotic of its zeta-function at the leading singularity.
 This fundamental result has been extended and generalised by various authors. A priori,  the limit in Theorem \ref{equi} need not to exist. In this case formulae relating Dixmier traces and asymptotic of zeta-function were established for positive operators in ideal $\mathcal{M}_{1,\infty}$ and a ``small'' subclass of Dixmier traces in \cite{CPS2003} and in \cite{CRSS2007} for a larger subclass of Dixmier traces. To cover the case of non-positive operators it is natural to consider operators of the form $VT$ with an arbitrary bounded $V$ and a positive $T$ from the ideal. This was achieved for operators in the ideal $\mathcal M_{1,\infty}$ and a subclass of Dixmier traces (generated by exponentiation invariant $\omega$'s, to be precise) in \cite{SZ2011}. It was further extended to more general Lorentz ideals in \cite{GS2014}.

 The modified dyadic approach mention above, enabled the extension of the Connes-Moscovici result for the class of \textit{all} Dixmier traces and operators of the form $VT$ with $V\in\mathcal{L}_\infty,0\leq T\in\mathcal{L}_{1,\infty}$ \cite{SUZ2017}.

More recently, G. Levitina and the second author extended the modified dyadic approach to more general  ideals \cite{principal}. For a decreasing function $g:[0,\infty)\rightarrow(0,\infty)$ satisfying
\begin{equation}\label{cond1}
\lim_{t\rightarrow\infty}\frac{g(t)}{g(2t)}=\frac12,
\end{equation}
they considered the following principal ideal
$$\mathcal{I}_g:=\{T\in\mathcal{C}_0(H):\mu(n,T)=O(g(n)) \}.$$
They established a bijective correspondence between continuous singular traces on $\mathcal{I}_g$ and continuous shift-invariant functional on $\ell_\infty$. Such correspondence allows to reformulate positive normalised traces  in terms of well-known and fairly well-studied Banach limits.

In Sections 4 and 5 we deal with extensions of Theorem \ref{equi} to more general ideals.
The initial question in this direction is how to define extended zeta-function residue on $\mathcal{I}_g$.
Motivated by \cite{CRSS2007} and \cite{GS2014}, we consider the following function
\begin{equation}\label{zzzeta}
t\mapsto\frac{1}{G(e^t)}\mathrm{Tr}(T^{1+1/t}),\quad 0\leq T\in \mathcal{I}_g,
\end{equation}
where $G$ is a primitive of $g$.
Although this function is bounded for $t>0$, its convergence as $t\to\infty$ is not guaranteed. However, after an application of some extended limit $\gamma$ on $L_\infty$ and making some appropriate restrictions on $g$, the resulting functional extends by linearity from the positive cone to a singular trace on the whole ideal. This extended functional is called an extended zeta-function residue on $\mathcal{I}_g$. We show that this functional is a scalar multiple of a Dixmier trace. Thus, in view of the bijection between continuous singular traces on $\mathcal{I}_g$ and continuous shift-invariant functional on $\ell_\infty$, this functional correspond to some Banach limit. Under several assumptions on the function $g$ we establish the form of this Banach limit (Lemma \ref{blmt0} and Theorem \ref{blmt}). Blending these results with the Lidskii formula from Section 3, we prove a criterion for coincidence of all extended zeta-function residues on an arbitrary operator from principal ideal $\mathcal{I}_g$.

In Section 5 we limit our considerations to principal ideals $\mathcal{I}_k,~k\geq0$ generated by an operator $T\in \mathcal L_\infty$ such that $$\mu(n,T)\sim\frac{\log^kn}{n},\quad n\to\infty.$$
These ideals accommodate several interesting classes of operators. For instance, operators from scattering calculus (of order $(-n, -n)$ on $\mathbb R^n$) belongs to $\mathcal I_1$ \cite[Example 2.7]{GU2020}, \cite{Nicola2003}, the (inverse of) Dirichlet Laplacian for a certain region in $\mathbb R^n$, $n\ge2$ of infinite volume belongs to $\mathcal I_{n-1}$ \cite{Camus2015} and log-polyhomogeneous operators \cite{Lesch_log_PDO} are found in all $\mathcal{I}_k,~k\geq0$.
In this section we extend Theorem \ref{equi} to the case of non-positive operators from $\mathcal{I}_k,~k\geq0$. Using the Lidskii formula, proved in Section 3, we are able to state the result in terms of eigenvalues. This is the second main result of this paper (Theorem \ref{main} below):
\begin{theorem}\label{main_Intro}
	Let $V\in\mathcal{L}_{\infty},~0\leq T\in\mathcal{I}_k$ and $c\in\mathbb{C}.$ The following are equivalent:
	\begin{enumerate}[(i)]
		\item All Dixmier traces equal $c$ on $VT$, that is
		$$\lim_{n\rightarrow\infty}\frac{1}{ G_{k}(n+1)}\sum_{j=0}^n\lambda(j,VT)=c.$$
		\item All extended $\zeta$-function residues coincide on operator $VT.$
		\item
		\begin{equation*}
			\lim_{t\rightarrow\infty}\frac{1}{t^{k+1}}\mathrm{Tr}(VT^{1+1/t})=k!c.
		\end{equation*}
	\end{enumerate}
\end{theorem}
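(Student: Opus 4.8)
\emph{Plan.} Set $g_k(t)=\log^k(t)/t$, so that $\mathcal{I}_k=\mathcal{I}_{g_k}$; note $g_k$ satisfies \eqref{cond1}, its primitive obeys $G_k(e^t)=t^{k+1}/(k+1)$ and $G_k(n)\sim\frac1{k+1}\log^{k+1}n$, and $\mathcal{I}_k$ is logarithmically closed. I would prove the theorem by showing that each of (i), (ii), (iii) is equivalent to the single statement
\[
\lim_{n\to\infty}E_n=c,\qquad E_n:=\frac{1}{G_k(n+1)}\sum_{j=0}^n\lambda(j,VT).
\]
Logarithmic closedness of $\mathcal{I}_k$ (via \cite{spectral}) guarantees that the diagonal operator $\Lambda:=\operatorname{diag}\lambda(VT)$ lies in $\mathcal{I}_k$, that $\varphi(VT)=\varphi(\Lambda)$ for every continuous trace $\varphi$ on $\mathcal{I}_k$, and in particular that $\sum_{j\le n}|\lambda(j,VT)|=O(G_k(n))$.

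\emph{The "soft" equivalences: (i) $\Leftrightarrow\lim E_n=c$ and (ii) $\Leftrightarrow\lim E_n$ exists.} Dixmier traces on $\mathcal{I}_k$ are continuous singular traces, so "all Dixmier traces equal $c$ on $VT$" is the same as "all Dixmier traces equal $c$ on $\Lambda$". Applying the Lidskii-type formula of Section~3 (Theorem~\ref{Lid}) to the diagonal operator $\Lambda$ rewrites each Dixmier trace of $\Lambda$ as its generating state $\omega$ applied to a Ces\`aro-type rearrangement of $\{\lambda(j,VT)\}$; letting $\omega$ range over all generating states, the resulting values fill the interval between the lower and upper Ces\`aro limits of $\{E_n\}_n$, so they all equal $c$ exactly when $\lim_nE_n=c$. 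This settles (i). For (ii) I would invoke Section~4: Lemma~\ref{blmt0} and Theorem~\ref{blmt} identify, through the bijection of \cite{principal} between continuous singular traces on $\mathcal{I}_k$ and continuous shift-invariant functionals on $\ell_\infty$, the Banach limit underlying each extended $\zeta$-function residue, and show that the extended $\zeta$-residues on $\mathcal{I}_k$ are precisely the Dixmier traces rescaled by $k!$. Hence "all extended $\zeta$-residues coincide on $VT$" is equivalent to "all Dixmier traces coincide on $VT$", i.e.\ by the previous point to the existence of $\lim_nE_n$.

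\emph{The zeta-function reduction: (iii) $\Leftrightarrow\lim E_n=c$.} First the positive case: for $0\le S\in\mathcal{I}_k$ one has
\[
\lim_{n\to\infty}\frac{1}{G_k(n+1)}\sum_{j=0}^n\mu(j,S)=c\iff\lim_{t\to\infty}\frac{1}{t^{k+1}}\operatorname{Tr}(S^{1+1/t})=k!\,c,
\]
the $\mathcal{I}_k$-analogue of Connes--Moscovici (Theorem~\ref{equi}): it follows from the Abelian/Tauberian machinery of \cite{CRSS2007,GS2014} together with $\operatorname{Tr}(S^{1+1/t})=\sum_j\mu(j,S)^{1+1/t}$ and the asymptotics $\sum_j(\log^kj/j)^{1+\varepsilon}\sim k!\,\varepsilon^{-(k+1)}$ as $\varepsilon\downarrow0$, the constant $k!$ being the modulus of the $k$-th $\varepsilon$-derivative of $\varepsilon^{-1}$. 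To reach the non-positive operator $VT$ the crux is the estimate
\[
\operatorname{Tr}(VT^{1+1/t})=\sum_{j\ge0}\lambda(j,VT)\,\mu(j,T)^{1/t}+o(t^{k+1}),\qquad t\to\infty,
\]
which I would prove via a Ringrose triangular decomposition $VT=\Delta+Q$ in a Schur basis: $\Delta$ is diagonal with entries $\lambda(j,VT)$ and contributes $\sum_j\lambda(j,VT)\mu(j,T)^{1/t}$ up to a controllable change-of-basis term, while $Q$ is strictly triangular, hence quasinilpotent, and satisfies $\operatorname{Tr}(QT^{1/t})=o(t^{k+1})$. Dividing by $t^{k+1}$ and applying the displayed Tauberian equivalence to the sequence $\{\lambda(j,VT)\}$ (whose partial sums are $O(G_k(n))$) in place of $\{\mu(j,S)\}$ turns the right-hand side into $k!\lim_nE_n$; this gives (iii) $\Leftrightarrow\lim_nE_n=c$, completes the circle of equivalences, and matches the constants, with common value $c$ in (i) and $k!\,c$ in (iii).

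\emph{Main obstacle.} The hard part is the bound $\operatorname{Tr}(QT^{1/t})=o(t^{k+1})$: because $VT$ is not positive the relevant quantities are no longer read off from singular values, and one must control the interplay of triangular truncation with the weight $T^{1/t}$; moreover, since the normalisation here is $t^{k+1}$ rather than the linear normalisation $t$ of \cite{SZ2011,SUZ2017}, the classical off-diagonal estimates have to be sharpened using the precise asymptotics $\mu(n,T)\sim\log^kn/n$ and $G_k(n)\sim\frac1{k+1}\log^{k+1}n$. A secondary, routine, point is bookkeeping the factor $k!$ consistently through the Mellin/Tauberian computation so that it surfaces simultaneously in (i) and (iii).
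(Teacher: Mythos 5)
Your overall architecture (show each of (i)--(iii) equivalent to the existence of $\lim_n E_n$) is reasonable, and the Tauberian element you anticipate really is the engine of the proof. But two of your pivots do not survive scrutiny, and they are the places where the paper does something structurally different.

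\textbf{(a) The $\zeta$-residues are not ``precisely the Dixmier traces rescaled.''} Proposition~\ref{scalar} (together with Lemma~\ref{blmt0}) shows that each $\tfrac{1}{(k+1)!}\zeta_\gamma$ is a Dixmier trace, but the paper explicitly remarks after Proposition~\ref{scalar} that the converse fails already on $\mathcal{L}_{1,\infty}$ (\cite[Theorem~3.10]{SUZ2017}): extended $\zeta$-residues form a \emph{proper} subclass of Dixmier traces. Hence ``all $\zeta$-residues coincide on $VT$'' does not reduce to ``all Dixmier traces coincide on $VT$'' by inclusion alone, and your implication (ii)\,$\Rightarrow$\,(i) has a genuine hole. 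In the paper that implication is exactly where the Tauberian work happens: Theorem~\ref{summation} translates $\zeta$-measurability into Abel-type summability of $a_n := \sum_{j=2^n-1}^{2^{n+1}-2}\lambda(j,VT)$ (via the explicit Banach limit form $B_{\gamma,g}$ of Theorem~\ref{blmt}), and Lemma~\ref{tauber} --- a complex-valued version of the classical Tauberian theorem, applicable because $|a_n|=O(n^k)$ by Weyl's inequality and logarithmic closedness --- upgrades the Abel limit to the Ces\`aro limit in~(i). Your argument never supplies this Tauberian step from summability to convergence.

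\textbf{(b) The Ringrose/pointwise expansion of $\mathrm{Tr}(VT^{1+1/t})$ is not how the paper --- or, as far as I can see, how anyone --- gets at (iii), and it is the critical missing step.} You propose
$\mathrm{Tr}(VT^{1+1/t})=\sum_j\lambda(j,VT)\mu(j,T)^{1/t}+o(t^{k+1})$ via a Schur decomposition $VT=\Delta+Q$, but $\Delta,Q$ live in a Schur basis for $VT$ whereas $T^{1/t}$ lives in the eigenbasis of $T$; the ``controllable change-of-basis term'' you invoke is precisely the content that makes this sort of pointwise estimate intractable, and neither quasinilpotency of $Q$ alone nor $\mu(n,T)\sim\log^k n/n$ rescues $\mathrm{Tr}(QT^{1/t})=o(t^{k+1})$. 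The paper avoids any such pointwise identity: it works at the level of the functional $\zeta_\gamma$, which has already been shown to be a continuous singular \emph{trace}. Then (ii)\,$\Leftrightarrow$\,(iii) is immediate from the identity $\zeta_{\gamma,V}(T)=\zeta_\gamma(VT)$ (equation~\eqref{Vzeta}, borrowed from \cite[Theorem~8.6.5]{LSZ2013}) combined with $G_k(e^t)=t^{k+1}/(k+1)$ and the fact that $\{\gamma(f)\}_{\gamma}$ ranges over $[\liminf f,\limsup f]$. The eigenvalue formula then enters \emph{after} this reduction, through the Lidskii-type Theorem~\ref{Lid} applied to the trace $\zeta_\gamma$, not through a direct decomposition of $\mathrm{Tr}(VT^{1+1/t})$. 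In short: the paper never asserts the identity you put at the centre of your argument, and deriving \eqref{Vzeta} from scratch by triangular truncation is not a ``secondary, routine'' point but the deepest part of the whole story.
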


The organization of the paper is the following. In Section 2 we gather some necessary terminology and auxiliary results. In Section 3 we employ a dyadic approach to traces to prove the Lidskii-type formula for all continuous singular traces on the principal ideals $\mathcal{I}_g$ (Theorem \ref{Lid}). In Section 4 we establish a particular form of the shift-invariant functional on $\ell_\infty$ corresponding to extended $\zeta$-function residues on $\mathcal{I}_g$ (Theorem \ref{blmt}). This allows to prove a criterion of measurability with respect to extended zeta-function residues on $\mathcal{I}_g$ in terms of the power series summation (generalization of the classical Abel summation). In Section 5 we prove Theorem \ref{main_Intro}.
As an application, we derive a sufficient condition for a tensor product (consisting of operators in $\mathcal{I}_0=\mathcal{L}_{1,\infty}$) being  Dixmier-measurable. We also study two examples of Dirichlet Laplacians and prove that they both belong to $\mathcal I_k$ and Dixmier-measurable.

\section{Preliminaries}
\subsection{Notations}
Set $\mathbb{N}:=\{ 0,1,2,\cdots\}$. We denote by $\ell_{\infty}$ the space of of all real-valued bounded sequences on $\mathbb{N}$ equipped with the norm
$$ \|x\|_{\infty}:=
\sup_{n\geq0}|x_n|.$$
We denote by $L_{\infty}$ the space of all (equivalence classes of) real-valued
essentially bounded Lebesgue measurable functions on $(0, \infty)$ equipped with the norm
$$ \|f\|_{\infty}:=\underset{{t>0}}{\mathrm{ess~sup}}~|f(t)|.$$
For $1\leq p<\infty$, we denote by $L_p$ the usual Lebesgue spaces on $(0,\infty)$.

For two measurable functions $f$ and $g$ defined on some neighborhood of $t_0$, the relation $f(t)\sim g(t),~t\rightarrow t_0$ means that
$$f(t)=g(t)+o(g(t)),\quad t\to t_0.$$
Recall that by $\mathcal{L}_\infty(H)$ ($\mathcal{C}_0(H),$ respectively) we denote the algebra of all bounded (compact, respectively) linear operators  on a separable Hilbert
space $H$. For brevity, we also write $\mathcal{L}_{\infty}:=\mathcal{L}_{\infty}(H)$. For an operator $T\in\mathcal{L}_\infty$, we denote by $\|T\|_\infty$ its operator norm. And we denote by $\Re T=\frac{1}{2}(T+T^*)$ and $\Im T=\frac{1}{2i}(T-T^*)$ its real and imaginary parts, respectively. For a self-adjoint operator $T\in\mathcal{L}_\infty$, we denote by $T_+$ and $T_-$ its positive and negative parts, and denote by $|T|=(T^*T)^{\frac12}$ the modulus of $T$.

Throughout the paper, we fix an orthonormal basis $\{e_n\}_{n\geq0}$ in $H.$ Using this basis, we define the diagonal homomorphism $\mathrm{diag}:\ell_{\infty}\rightarrow\mathcal{L}_{\infty}$  by setting
$$\mathrm{diag}(x):=\sum_{i=0}^{\infty}x_{n}e_{n}\otimes e_{n},~x\in \ell_\infty,$$
where, $e_{n}\otimes e_n$ is the rank one projection onto $\mathbb{C}e_n$.

For an operator $T\in\mathcal{C}_0(H),$ we denote by $\lambda(T):=\{\lambda(n,T)\}_{n\geq0}$ the sequence of eigenvalues (repeated according to multiplicity) of $T$ arranged such that the sequence of absolute values is non-increasing. The sequence $\mu(n,T):=\lambda(n,|T|),$ $n\geq0$ is referred to as the singular value sequence of the operator $T.$  Sometimes,
we also consider the singular value function $\mu(T):=\mu(\cdot,T)$ of $T$ defined by setting
$$\mu(t,T):=\sum_{n=0}^\infty\mu(n,T)\chi_{[n,n+1)}(t),\quad t\geq0.$$
Let $\mathrm{Tr}$ denote the standard trace on $\mathcal{L}_\infty.$ For $1\leq p<\infty,$ we denote by $\mathcal{L}_p$ the Schatten ideals of compact operators equipped with the norm $$\|T\|_p:=(\mathrm{Tr}(|T|^p))^{\frac1p}=\big(\sum_{n=0}^\infty(\mu(n,T)^p\big)^{\frac1p}.$$ 

\subsection{Functions of regular variation}
A concept of regularly varying functions play a crucial role in this paper, we list some relevant definitions and key properties.
We refer the reader to the book \cite[Section 1.4.2]{regular} for further details.
\begin{definition}\label{definition of rv function}
 A positive measurable function $g$ defined on some interval $(a,\infty)$ is said to be \emph{regular varying of index $\alpha$}($\in\mathbb{R}$) if
$$ \lim _{t \rightarrow \infty} \frac{g(\lambda t)}{g(t)}=\lambda^\alpha, \quad\forall~ \lambda>0.$$
The classes of \emph{regularly varying} fucntions is denoted by $\mathcal{R}_\alpha$. The class $\mathcal{R}_0$ is usually called that of \emph{slowly varying} functions.
\end{definition}

\begin{proposition}\label{testing}(\cite[Proposition~2.15]{GU2020})
Let $\alpha\in\mathbb{R}.$
For a monotone function $g:(a,\infty)\rightarrow(0,\infty),$ we have  $g\in \mathcal{R}_\alpha$ if and only if
$$\lim _{t \rightarrow \infty} \frac{g\left(\lambda_0 t\right)}{g(t)}=\lambda_0^\alpha,\quad ~for~some~\lambda_0>0.$$
\end{proposition}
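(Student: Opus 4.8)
One direction is immediate: if $g\in\mathcal R_\alpha$ then $\lim_{t\to\infty}g(\lambda_0 t)/g(t)=\lambda_0^\alpha$ holds for \emph{every} $\lambda_0>0$ by the very definition of $\mathcal R_\alpha$. So the plan concentrates on the converse (a monotone-function version of Karamata's characterisation theorem, cf.\ \cite[Section~1.4.2]{regular}): assuming $g$ monotone and $\lim_{t\to\infty}g(\lambda_0 t)/g(t)=\lambda_0^\alpha$ for some $\lambda_0>0$ (necessarily $\lambda_0\ne1$, otherwise the hypothesis is vacuous), I want $g\in\mathcal R_\alpha$. First I would normalise. If $g$ is non-increasing, replace it by $1/g$: this is non-decreasing, satisfies the same hypothesis with $\alpha$ replaced by $-\alpha$, and $1/g\in\mathcal R_{-\alpha}$ exactly when $g\in\mathcal R_\alpha$. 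If $\lambda_0<1$, the substitution $s=\lambda_0 t$ rewrites the hypothesis as $\lim_{s\to\infty}g(\lambda_0^{-1}s)/g(s)=\lambda_0^{-\alpha}$, so I may assume $g$ non-decreasing and $\lambda_0>1$; then $\alpha\ge0$, since $g(\lambda_0 t)\ge g(t)$ forces $\lambda_0^\alpha\ge1$.

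Next I would bootstrap the hypothesis from $\lambda_0$ to all its integer powers: from
\[
\frac{g(\lambda_0^{n}t)}{g(t)}=\prod_{j=0}^{n-1}\frac{g(\lambda_0^{\,j+1}t)}{g(\lambda_0^{\,j}t)},\qquad \lambda_0^{\,j}t\to\infty ,
\]
one obtains $\lim_{t\to\infty}g(\lambda_0^{n}t)/g(t)=\lambda_0^{n\alpha}$ for every $n\in\mathbb Z$. For an arbitrary $\lambda>0$ I then pick $n\in\mathbb Z$ with $\lambda_0^{n}\le\lambda<\lambda_0^{n+1}$ and use monotonicity, $g(\lambda_0^{n}t)\le g(\lambda t)\le g(\lambda_0^{n+1}t)$, to squeeze
\[
\lambda_0^{n\alpha}\ \le\ \liminf_{t\to\infty}\frac{g(\lambda t)}{g(t)}\ \le\ \limsup_{t\to\infty}\frac{g(\lambda t)}{g(t)}\ \le\ \lambda_0^{(n+1)\alpha}.
\]
This already shows $g$ is ``$O$-regularly varying'': the quantities $L_*(\lambda):=\liminf_{t\to\infty}g(\lambda t)/g(t)$ and $L^*(\lambda):=\limsup_{t\to\infty}g(\lambda t)/g(t)$ are finite and positive and lie in an interval of multiplicative width $\lambda_0^{\alpha}$ that contains $\lambda^{\alpha}$.

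The remaining step — which I expect to be the main obstacle — is to collapse that interval to the single value $\lambda^{\alpha}$, i.e.\ to upgrade $O$-regular variation to genuine regular variation. The plan is to combine the submultiplicativity of $L^*$ and supermultiplicativity of $L_*$ (from $g(\lambda\mu t)/g(t)=(g(\lambda\mu t)/g(\mu t))(g(\mu t)/g(t))$ together with $\mu t\to\infty$), the exact identities $L^*(\lambda_0^{n})=L_*(\lambda_0^{n})=\lambda_0^{n\alpha}$, and the fact that the integers $n(m)$ defined by $\lambda_0^{n(m)}\le\lambda^{m}<\lambda_0^{n(m)+1}$ satisfy $n(m)/m\to\log\lambda/\log\lambda_0$: applying the displayed squeeze to $\lambda^{m}$ in place of $\lambda$ and then taking $m$-th roots should force $L^*(\lambda)$ and $L_*(\lambda)$ down to $\lambda^{\alpha}$. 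An equivalent packaging reduces to the slowly varying case by passing to $h(t):=g(t)/t^{\alpha}$ — which satisfies $h(\lambda_0 t)/h(t)\to1$ — where for a \emph{monotone} function the squeeze between $h(\lambda_0^{n}t)$ and $h(\lambda_0^{n+1}t)$ would close at once, both ends tending to $1$; the subtlety is that $h$ itself need not be monotone, so one must first replace it by an asymptotically equivalent monotone function, which is exactly where the monotonicity of the original $g$ is used decisively. Once this propagation from the single scale $\lambda_0$ to all scales is carried out, the cases $0<\lambda<1$ follow by applying the conclusion to $\lambda^{-1}$ using $L^*(\lambda^{-1})=1/L_*(\lambda)$ and $L_*(\lambda^{-1})=1/L^*(\lambda)$, and $\lambda=1$ is trivial.
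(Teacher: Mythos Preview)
The paper does not give its own proof of this proposition; it simply cites \cite[Proposition~2.15]{GU2020}. So there is nothing in the paper to compare your argument against directly.

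That said, your argument has a genuine gap at exactly the step you flagged as ``the main obstacle,'' and the gap cannot be closed because the proposition as stated is \emph{false} for $\alpha\ne 0$. Your $m$-th-root step goes in the wrong direction: submultiplicativity gives $L^{*}(\lambda^{m})\le L^{*}(\lambda)^{m}$, so combining with the squeeze $L^{*}(\lambda^{m})\ge\lambda_{0}^{n(m)\alpha}$ only yields $L^{*}(\lambda)\ge\lambda^{\alpha}$; dually $L_{*}(\lambda)\le\lambda^{\alpha}$. You end up with $L_{*}(\lambda)\le\lambda^{\alpha}\le L^{*}(\lambda)$, the opposite of what is needed, and your alternate route via $h(t)=g(t)/t^{\alpha}$ fails for precisely the reason you note --- $h$ need not be monotone, and replacing it by an asymptotically equivalent monotone function is tantamount to assuming regular variation. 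Here is why no argument can work: for $\lambda_{0}=2$ and $\varepsilon>0$ small, the function
\[
g(t)=t^{\alpha}\bigl(2+\varepsilon\sin(2\pi\log_{2}t)\bigr)
\]
is monotone (a short derivative computation), satisfies $g(2t)/g(t)=2^{\alpha}$ \emph{identically}, yet $g(\sqrt{2}\,t)/g(t)$ oscillates and has no limit, so $g\notin\mathcal{R}_{\alpha}$. The classical correct statement (see the remarks following Theorem~1.4.3 in \cite{regular}) is that for monotone $g$ one needs the limit for \emph{two} values $\lambda_{1},\lambda_{2}$ with $\log\lambda_{1}/\log\lambda_{2}$ irrational. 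The sole exception is $\alpha=0$: there your squeeze closes immediately, since $1\le g(\lambda t)/g(t)\le g(\lambda_{0}t)/g(t)\to 1$ for $1\le\lambda\le\lambda_{0}$, and this covers the application to $\Omega\subset\mathcal{R}_{0}$ that the paper makes.
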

In particular, if $g$ is a positive decreasing function satisfying \ref{cond1}, then $g\in\mathcal{R}_{-1}.$
\begin{definition}\label{definition of function class G}
Let $\mathcal{G}$ be the set of all decreasing and vanishing at $+\infty$ functions $g:[0,\infty)\rightarrow(0,\infty)$ such that $g\notin L_1$ and condition \eqref{cond1} holds.
Let $\Omega$ be the set of all increasing, concave, diverging at infinity functions  $G:[0,\infty)\rightarrow[0,\infty)$ such that $G(t)=O(t),$ $t\downarrow0$ and
\begin{equation}\label{cond02}
\lim_{t\rightarrow\infty}\frac{G(2t)}{G(t)}=1.
\end{equation}
\end{definition}
Note that functions in $\mathcal{G}$ or $\Omega$ are always monotone, taking $\lambda_0=2$ in Proposition \ref{testing}, we conclude that $\mathcal{G}\subset\mathcal{R}_{-1}$ and $\Omega\subset\mathcal{R}_0.$
It is straightforward that for $g\in\mathcal{G}$ its primitive $G(t):=\int_0^t g(s)ds,$ $t\geq0$ belongs to $\Omega.$
\subsection{Ideals and traces}
In this paper, we consider several partial orders on $\mathcal{L}_\infty$ \cite{AH1994}.
\begin{definition}\label{log}
Let $S,T\in\mathcal{L}_{\infty},$ then
\begin{enumerate}[(i)]
  \item  the operator $S$ is said to be \emph{submajorized} by $T$ (in the sense of Hardy-Littlewood-$P\acute{o}lya$) if and only if
$$\sum_{j=0}^{n}\mu(j,S)\leq\sum_{j=0}^{n}\mu(j,T),\quad n\geq0.$$
In this case, we write $S\prec\prec T$.
  \item  the operator $S$ is said to be \emph{logarithmically~submajorized} by $T$  if and only if
$$\prod_{j=0}^{n}\mu(j,S)\leq\prod_{j=0}^{n}\mu(j,T),\quad n\geq0.$$
In this case, we write $S\prec\prec_{\log} T$.
\end{enumerate}
\end{definition}
For every $T\in\mathcal{C}_0(H),$ we have the following Weyl inequality (see, e.g. \cite{WEYL1949})
\begin{equation}\label{weyl}
\prod_{k=0}^n|\lambda(k,T)|\leq\prod_{k=0}^n\mu(k,T),\quad n\geq0.
\end{equation}
In other words, $\mathrm{diag}(\lambda(T))\prec\prec_{\log} T.$
\begin{definition}\label{partial orders in ideals}
We say an ideal $\mathcal{I}$ in $\mathcal{L}_\infty$ is \begin{enumerate}[(i)]
\item \emph{fully symmetric}, if for $S\in\mathcal{L}_\infty,~T\in\mathcal{I}$ and $S\prec\prec T$, we have $S\in\mathcal{I}$.
\item \emph{logarithmically closed}, if for $S\in\mathcal{L}_\infty,~T\in\mathcal{I}$ and $S\prec\prec_{\log} T$, we have $S\in\mathcal{I}$.
\item \emph{geometrically stable}, if $$\mathrm{diag}\Big(\Big\{\Big(\prod_{j=0}^n\mu(j,T)\Big)^{\frac{1}{n+1}}\Big\}_{n=0}^{\infty}\Big)\in\mathcal{I},\quad \forall ~T\in\mathcal{I}.$$
\end{enumerate}
\end{definition}

\begin{remark}
It is well-known that for $p\geq1,$
\begin{equation}\label{fully symmetric}
\mathrm{Tr}(S^p)\leq \mathrm{Tr}(T^p),\quad S,T\geq0, \mathrm{~and~}S\prec\prec T.
\end{equation}
This implies that $\mathcal{L}_p,$ $p\geq1$ are fully symmetric ideals. Besides, the class of logarithmically ideals is strictly wider than that of gemetrically stable ideals, see \cite{spectral}.
\end{remark}

For every $g\in\mathcal{G},$ consider the principal ideal
$$\mathcal{I}_g:=\{T\in\mathcal{C}_0(H):\mu(n,T)=O(g(n))\}.$$
Condition \eqref{cond1} guarantees that the mapping
\begin{equation}\label{quasinorm}
\|T\|_{\mathcal{I}_g}:=\sup_{n\geq0}\frac{\mu(n,T)}{g(n)}
\end{equation}
is a quasi-norm (\cite[Theorem~1.2]{sparr}).

\begin{remark}\label{rmk1}
In definition of $\mathcal{I}_g$ we supposed that $g$ is decreasing on $[0,\infty).$ This assumption can be relaxed by that
$g$ is eventually decreasing, i.e., for some $a>0,$ the function
$$\tilde{g}:t\mapsto g(t+a),\quad t\geq0$$
is decreasing. In this case, we have $\mathcal{I}_g=\mathcal{I}_{\tilde{g}}$ with equivalent quasi-norms. The particular choice of $a>0$ is inessential.
\end{remark}

\begin{example}
For $k\in\mathbb{N}$, consider the principal ideals $\mathcal{I}_k:=\mathcal{I}_{g_k}$ with $g_k(t)=\frac{\log^{k}(t+2)}{t+2}.$
When $k=0$, it is easy to see that $\mathcal{I}_{0}$ coincides with the weak trace class ideal $\mathcal{L}_{1,\infty}.$
\end{example}

For every $G\in\Omega$, the Lorentz ideal $\mathcal{M}_G$ is defined by setting
\begin{equation}\label{lorentz_def}
	\mathcal{M}_G:=\left\{T\in \mathcal{C}_0(H):\|T\|_{\mathcal{M}_G}:=\sup_{n\geq0}\frac{1}{G(n+1)}\sum_{k=0}^n\mu(k,T)<\infty\right\}.
\end{equation}
Note that, if $G(t)=\log(t+1)$, then $\mathcal{M}_G$ is exactly the Dixmier-Macaev ideal $\mathcal{M}_{1,\infty}$ from Introduction. And, if $G\in\Omega$ is the primitive function for some $g\in\mathcal{G},$ then it is clear that $\mathcal{I}_g\subset\mathcal{M}_G.$ Moreover,
\begin{equation}\label{second definition for lorentz ideal}
\mathcal{M}_G=\left\{T\in \mathcal{C}_0(H):\mu(T)\prec\prec c\cdot g \mathrm{~for~some~c>0}\right\}.
\end{equation}

\begin{definition}
Let $\mathcal{I}$ be an ideal in $\mathcal{L}_{\infty}$. A linear functional $\varphi:\mathcal{I}\rightarrow\mathbb{C}$ is called a \emph{trace} if
$$ \varphi(ST)=\varphi(TS),\quad T\in\mathcal{I},~S\in\mathcal{L}_{\infty}.$$
This is equivalent to say that $\varphi$ is \emph{unitarily invariant}, i.e., $\varphi(U^*TU)=\varphi(T)$ holds whenever $T\in\mathcal{I}$ and $U$ is unitary (\cite[Lemma 1.2.1]{LSZ2013}).
\end{definition}
By Calkin theorem \cite{Calkin}, for any non-trivial ideal $\mathcal{I}$ in $\mathcal{L}_{\infty}$, the following inclusions hold
$$ \mathcal{F}(H)\subset\mathcal{I}\subset \mathcal{C}_0(H),$$
where $\mathcal{F}(H)$ is the set of finite rank operators on $H.$
\begin{definition}
Let $\mathcal{I}$ be an ideal in $\mathcal{L}_\infty,$ a linear functional $\varphi:\mathcal{I}\rightarrow\mathbb{C}$ is called
\begin{enumerate}[(i)]
\item \emph{positive} if $\varphi(T)\geq0,$ for all $0\leq T\in\mathcal{I}.$
\item \emph{singular} if $\varphi$ vanishes on $\mathcal{F}(H).$
\end{enumerate}
\end{definition}

Below, we recall an equivalent description of positive traces (see, e.g. \cite[Lemma 14]{spectral}).
\begin{lemma}\label{positive trace}
Let $\mathcal{I}$ be an ideal in $\mathcal{L}_\infty.$ A linear functional $\varphi$ on $\mathcal{I}$ is a positive trace if and only if $\varphi(S)\leq \varphi(T)$ for all $0\leq S,T\in\mathcal{I}$ such that $\mu(S)\leq\mu(T).$
\end{lemma}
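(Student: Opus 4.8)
The plan is to prove both implications directly from the definitions, using the characterization of traces as unitarily invariant functionals and the standard fact that the relation $\mu(S)\le\mu(T)$ (coordinate-wise domination of singular value sequences) for positive operators can be realized, up to passing to unitarily equivalent copies, by an honest operator inequality. First I would recall that if $0\le S,T\in\mathcal I$ with $\mu(S)\le\mu(T)$, then by the spectral theorem one may write $S=\mathrm{diag}(\mu(S))$ and $T=\mathrm{diag}(\mu(T))$ with respect to some orthonormal bases; conjugating $S$ by a unitary that aligns its eigenbasis with that of $T$, one obtains an operator $S'$ unitarily equivalent to $S$ with $0\le S'\le T$ (since the diagonal entries satisfy $\mu(n,S)\le\mu(n,T)$ in the common basis). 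Then $T-S'\ge 0$ lies in $\mathcal I$ (ideals are linear subspaces, and $S',T\in\mathcal I$), so it is a legitimate argument of $\varphi$.

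For the ``only if'' direction, assume $\varphi$ is a positive trace. Given $0\le S,T\in\mathcal I$ with $\mu(S)\le\mu(T)$, form $S'$ and $T-S'\ge0$ as above. Positivity gives $\varphi(T-S')\ge0$, hence $\varphi(S')\le\varphi(T)$; unitary invariance (equivalently, the trace property, via \cite[Lemma 1.2.1]{LSZ2013}) gives $\varphi(S)=\varphi(S')$, so $\varphi(S)\le\varphi(T)$, as required.

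For the ``if'' direction, assume $\varphi(S)\le\varphi(T)$ whenever $0\le S,T\in\mathcal I$ with $\mu(S)\le\mu(T)$. Positivity is immediate by taking $S=0$: for any $0\le T\in\mathcal I$ we have $\mu(0)=0\le\mu(T)$, so $\varphi(0)=0\le\varphi(T)$. It remains to show $\varphi$ is a trace, i.e.\ $\varphi(U^*TU)=\varphi(T)$ for all $T\in\mathcal I$ and all unitaries $U$. For $0\le T\in\mathcal I$ this follows because $U^*TU\ge0$ and $\mu(U^*TU)=\mu(T)$, so the hypothesis applied in both directions gives $\varphi(U^*TU)=\varphi(T)$. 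The general case reduces to the positive case: decompose an arbitrary $T\in\mathcal I$ into real and imaginary parts $T=\Re T+i\,\Im T$, and each self-adjoint part into positive and negative parts $\Re T=(\Re T)_+-(\Re T)_-$, etc.; since $\varphi$ is linear and each summand is a positive element of $\mathcal I$, unitary invariance on positive operators propagates to all of $\mathcal I$. Invoking \cite[Lemma 1.2.1]{LSZ2013} once more, unitary invariance is equivalent to the trace property.

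The only genuinely delicate point is the realization step in the first paragraph: one must be careful that the operator $S'$ produced by conjugation is majorized by $T$ \emph{as operators}, not merely in singular values, which is where the shared orthonormal basis is essential. Everything else is a routine unwinding of definitions, so I do not expect any real obstacle there.
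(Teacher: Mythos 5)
Your proof is correct, and it takes the natural route. The paper itself does not reproduce a proof — it simply cites \cite[Lemma 14]{spectral} — so there is nothing in the text to compare against, but your argument is exactly the standard one: (1) for $0\le S,T\in\mathcal C_0(H)$ with $\mu(S)\le\mu(T)$, diagonalize both in orthonormal eigenbases $\{\phi_n\}$, $\{\psi_n\}$, let $U$ be the unitary with $U\psi_n=\phi_n$, and observe that $U^*SU=\sum_n\mu(n,S)\psi_n\otimes\psi_n\le T$ with $T-U^*SU\in\mathcal I$; (2) positivity plus unitary invariance of a trace then gives $\varphi(S)=\varphi(U^*SU)\le\varphi(T)$; (3) conversely, monotonicity with $S=0$ gives positivity, monotonicity applied twice with $\mu(U^*TU)=\mu(T)$ gives unitary invariance on the positive cone, and linearity together with the decomposition $T=(\Re T)_+-(\Re T)_-+i(\Im T)_+-i(\Im T)_-$ (each piece in $\mathcal I$ since two-sided ideals of $\mathcal L_\infty$ are $*$-closed and absorb moduli via polar decomposition) propagates unitary invariance to all of $\mathcal I$, which by \cite[Lemma 1.2.1]{LSZ2013} is the trace property. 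The one point you flagged — that $S'\le T$ must hold as an operator inequality, not merely in singular values — is indeed the crux, and your appeal to a shared eigenbasis handles it correctly; the only detail worth making explicit is that when $\ker S$ or $\ker T$ is nontrivial the eigenbases are completed by (possibly empty) orthonormal bases of the respective kernels so that both $\{\phi_n\}_{n\ge0}$ and $\{\psi_n\}_{n\ge0}$ index full orthonormal bases of $H$, which is always possible since $H$ is separable.
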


The following lemma shows, that how the asymptotic behavior (at infinity) of singular values for a positive operator effect its trace.
\begin{lemma}
Let $\mathcal{I}\nsubseteq\mathcal{L}_1$ be an ideal in $\mathcal{L}_{\infty}$, and let $\varphi$ be a positive trace on $\mathcal{I}.$ If $0\leq S,T\in \mathcal{I}$ are such that $\mu(n,S)\sim\mu(n,T)$ as $n\rightarrow\infty$, then $\varphi(S)=\varphi(T).$
\end{lemma}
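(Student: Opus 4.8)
The plan is to deduce the statement from the preceding Lemma~\ref{positive trace} together with the hypothesis $\mathcal{I}\nsubseteq\mathcal{L}_1$. First I would reduce to comparing $S$ and $T$ via submajorization after discarding a finite-rank piece. Since $\mu(n,S)\sim\mu(n,T)$, for every $\varepsilon\in(0,1)$ there is $N$ with $(1-\varepsilon)\mu(n,T)\le\mu(n,S)\le(1+\varepsilon)\mu(n,T)$ for all $n\ge N$. Writing $S=S_0+S_1$ and $T=T_0+T_1$ where $S_0,T_0$ are the truncations to the first $N$ singular values (finite rank, hence in $\mathcal{I}$) and $S_1,T_1$ are the tails, I would note $\mu(n,S_1)\le(1+\varepsilon)\mu(n,T_1)$ and $\mu(n,T_1)\le(1-\varepsilon)^{-1}\mu(n,S_1)$ for all $n$, so by Lemma~\ref{positive trace} (applied to these positive operators whose singular value functions are pointwise comparable) $\varphi(S_1)\le(1+\varepsilon)\varphi(T_1)$ and symmetrically $\varphi(T_1)\le(1-\varepsilon)^{-1}\varphi(S_1)$. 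Here I use positivity of $\varphi$ to scale: $\mu(c\cdot R)=c\,\mu(R)$ for $c>0$, so $\mu(S_1)\le\mu((1+\varepsilon)T_1)$ gives $\varphi(S_1)\le\varphi((1+\varepsilon)T_1)=(1+\varepsilon)\varphi(T_1)$.

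Next I would handle the finite-rank corrections. A positive trace on an ideal $\mathcal{I}$ that is \emph{not} contained in $\mathcal{L}_1$ necessarily vanishes on $\mathcal{F}(H)$: indeed, any nonzero positive trace that is nonzero on some rank-one projection would, by unitary invariance and additivity over orthogonal projections, force $\varphi$ to be an unbounded multiple of the standard trace, and a standard argument (the trace of a rank-one projection is a fixed constant $\tau$; if $\tau\ne0$ then $\varphi=\tau\,\mathrm{Tr}$ on $\mathcal{F}(H)$ and this does not extend to a trace on any ideal strictly larger than $\mathcal{L}_1$ — one picks a positive $R\in\mathcal{I}\setminus\mathcal{L}_1$ and bounds $\varphi(R)$ below by $\tau\sum_{k=0}^n\mu(k,R)\to\infty$, contradicting finiteness). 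Hence $\varphi(S_0)=\varphi(T_0)=0$, and by additivity $\varphi(S)=\varphi(S_1)$, $\varphi(T)=\varphi(T_1)$.

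Combining, for every $\varepsilon\in(0,1)$ we get $(1-\varepsilon)\varphi(T)\le\varphi(S)\le(1+\varepsilon)\varphi(T)$, and letting $\varepsilon\to0$ yields $\varphi(S)=\varphi(T)$.

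The main obstacle is the claim that a positive trace on an ideal not contained in $\mathcal{L}_1$ is automatically singular; this is where the hypothesis $\mathcal{I}\nsubseteq\mathcal{L}_1$ is essential, and it must be argued carefully rather than quoted, since Lemma~\ref{positive trace} alone does not give it. Everything else is a routine $\varepsilon$-argument using monotonicity of $\varphi$ under $\le$ of singular value functions and positive homogeneity. A minor technical point to check is that after truncation the pointwise inequality $\mu(S_1)\le(1+\varepsilon)\mu(T_1)$ genuinely holds for \emph{all} $n\ge0$ (for $n<N$ both sides are zero), so Lemma~\ref{positive trace} applies directly without a further decomposition.
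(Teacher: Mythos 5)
Your proof is correct and follows essentially the same line as the paper's: compare tails via a pointwise $(1\pm\varepsilon)$-bound on singular values, apply the monotonicity criterion of Lemma~\ref{positive trace}, and dispose of the finite-rank remainder using singularity of $\varphi$. The one genuine difference is that the paper simply cites \cite[Lemma 15(a)]{spectral} for the fact that a positive trace on an ideal $\mathcal I\nsubseteq\mathcal L_1$ is automatically singular, whereas you supply a self-contained argument (unitary invariance gives $\varphi=\tau\,\mathrm{Tr}$ on $\mathcal F(H)$, and comparing with truncations of a positive $R\in\mathcal I\setminus\mathcal L_1$ forces $\tau=0$); that sketch is sound and is indeed the standard proof of the cited fact. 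The paper also avoids the explicit decomposition $S=S_0+S_1$, $T=T_0+T_1$ by writing a single singular-value inequality $\mu(S)\le(1+\varepsilon)\mu(T)+\|S\|_\infty\chi_{[0,N]}$ and applying monotonicity once, which is a bit cleaner but not a different idea.

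One small slip to fix: in your final remark you say that for $n<N$ "both sides are zero." That is not how the tail behaves. If $S_1$ is the spectral truncation removing the top $N$ eigenvalues, then its singular value sequence is the \emph{shifted} sequence $\mu(n,S_1)=\mu(n+N,S)$, which is nonzero for small $n$; the pointwise inequality $\mu(n,S_1)\le(1+\varepsilon)\mu(n,T_1)$ holds for all $n\ge0$ precisely because $n+N\ge N$ puts you in the regime where the asymptotic comparison applies, not because the initial values vanish. The conclusion is unaffected, but the justification as written is wrong.
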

\begin{proof}\rm{}
By assumption, for every $\varepsilon>0,$ there exists $N>0$ such that $$\mu(S)\leq(1+\varepsilon)\mu(T)+\|S\|_\infty \chi_{[0,N]}.$$
Since $\varphi$ is positive and $\mathcal{I\nsubseteq}\mathcal{L}_1,$ it follows from \cite[Lemma 15(a)]{spectral} that $\varphi$ is singular. This together with Lemma \ref{positive trace} implies that
$$\varphi(S)\leq (1+\varepsilon)\varphi(T).$$
Since $\varepsilon$ is arbitrary, we have $\varphi(S)\leq\varphi(T).$ Interchanging $S$ and $T,$ we have $\varphi(S)\geq\varphi(T).$ It follows that $\varphi(S)=\varphi(T).$
\end{proof}

\begin{definition}
A positive trace $\varphi$ on some Lorentz ideal $\mathcal{M}_G$ is said to be \emph{fully symmetric} if $\varphi(S)\leq\varphi(T)$ whenever $0\leq S,T\in \mathcal{M}_{G}$ and $S\prec\prec T.$
\end{definition}
\subsection{Extended limits}
In this subsection, we collection some necessary definitions and results about extended limits on sequences and functions.
\begin{definition}
The left and right shift operators $S_l, S_r:\ell_\infty\rightarrow\ell_\infty$ are defined respectively by
$$S_{l}(x_0,x_1,x_2,\cdots)=(x_1,x_2,x_3\cdots),$$
$$S_{r}(x_0,x_{1},x_{2},\cdots)=(0,x_0,x_{1},x_{2},\cdots).$$
Clearly, we have $S_l\circ S_r=I.$
\end{definition}
\begin{definition}
 A linear functional $\theta$ on $\ell_\infty$ is said to be \emph{shift-invariant} if $\theta\circ S_r=\theta$.
\end{definition}
It is easy to see that a linear functional $\theta$ on $\ell_\infty$ is shift-invariant if and only if $\theta\circ S_l=\theta$ (see, e.g. \cite[Lemma~8.1]{Pietsch2015}).

Let us recall a notion of extended limit on $\ell_\infty$, i.e., the Hahn-Banach extension of the classical limit functional from the subspace of convergent sequences to the whole space $\ell_\infty$.
\begin{definition}
Let $\theta\in\ell_{\infty}^*,$ then $\theta$ is called an \emph{extended limit} on $\ell_\infty$ if
\begin{enumerate}[(i)]
\item $\theta(x)\geq0,$ provided that $x\geq0;$
\item $\theta(x)=\lim_{n\rightarrow\infty}x_n,$ provided $x$ is convergent.
\end{enumerate}
\end{definition}
It is well-known that
$$ \{\theta(x):\theta\in EL(\mathbb{N})\}=[\liminf_{n\rightarrow\infty}x_{n},\hspace{0.2em}\limsup_{n\rightarrow\infty}x_{n}].$$

Let $\sigma_2:\ell_\infty\rightarrow\ell_\infty$ be the dilation operator defined by
$$\sigma_2x=(x_0,x_0,x_1,x_1,x_2,x_2,\cdots).$$
An extended limit $\omega\in EL(\mathbb{N})$ is called \emph{dilation invariant} if $\omega\circ \sigma_2=\omega.$ There is an extensive theory of dilation invariant extended limits and their connection with Dixmier traces (see, e.g. \cite{SUZ2013, CPS2003, SSUZ2019, ASSU2016, ASSU2018}).

Below we introduce a notion of Banach limit, which will be frequently used in this paper. This definition appeared in works of S. Mazur and S. Banach \cite{Mazur, Banach1932}. For more details and recent advances we refer to \cite{SSU_Notices, Sofi_Survey, Das_Nanda_book, SSU2019, SSU2011}.
\begin{definition}\label{Blimit}
An extended limit $B:\ell_{\infty}\rightarrow\mathbb{R}$ is called a Banach limit if it is shift-invariant.
\end{definition}
Extended limits on $L_\infty$ are defined in a similar fashion.
\begin{definition}
A functional $\gamma\in L_\infty^*$ is said to be an \emph{extended limit} (at $+\infty$) if:
\begin{enumerate}[(i)]
 \item $\gamma(f)\geq0,$ provided that $f\geq0;$
\item $\gamma(f)=ess\lim_{t\rightarrow\infty}f(t),$ provided $f$ is convergent at $+\infty$.
\end{enumerate}
\end{definition}
The sets of all extended limits on $\ell_{\infty}$ and $L_{\infty}$ are denoted by $EL(\mathbb{N})$ and $EL(0,\infty)$, respectively.

The following lemma will be frequently used, see e.g. \cite[Lemma 9.3.6, 6.3.5, 6.3.1]{LSZ2013}.
\begin{lemma}\label{triple}
Let $\gamma\in EL(0,\infty)$. We have
\begin{enumerate}[(i)]
  \item\label{ela} for every $f\in L_\infty,$
  \begin{equation*}
  \{\gamma(f):\gamma\in EL(0,\infty)\}=[ess\liminf_{t\rightarrow\infty}f(t),\hspace{0.2em}ess\limsup_{t\rightarrow\infty}f(t)].
\end{equation*}
  \item\label{elb} for every $f\in L_\infty$ such that $\gamma(f)=a$ and $f\geq a,$
  \begin{equation*}
    \gamma(fg)=a\gamma(g),\quad \forall~g\in L_\infty.
  \end{equation*}
  \item\label{elc} if $\tilde{\gamma}$ is the additive extension of $\gamma$ on $ L_0^+ (0,\infty)$ the set of positive measurable functions (see \cite[Lemma 6.5.1]{LSZ2013}) given by
 $$\tilde{\gamma}(g):=\sup\{\gamma(f):0\leq f\leq g,~f\in L_\infty\},~g\in L_0^+(0,\infty).$$
 And if $g$ is bounded on $(t,\infty)$ for some $t>0$, then $\tilde{\gamma}(g)=\gamma(g\chi_{(t,\infty)}).$
\end{enumerate}
\end{lemma}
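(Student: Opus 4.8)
The plan is to prove the three parts of Lemma~\ref{triple} separately, in each case reducing the statement about $L_\infty$ on $(0,\infty)$ to a Hahn--Banach / order argument. For part \eqref{ela}, I would first note that $ess\liminf$ and $ess\limsup$ are the inf and sup of the limit values of all extended limits. The inclusion $\subseteq$ is immediate: if $a:=ess\liminf_{t\to\infty}f(t)$ and $b:=ess\limsup_{t\to\infty}f(t)$, then for every $\varepsilon>0$ the function $(f-a+\varepsilon)\chi_{(N,\infty)}$ is eventually nonnegative for suitable $N$, and positivity of $\gamma$ together with $\gamma(\chi_{(N,\infty)})=1$ (since $\chi_{(N,\infty)}\to1$) forces $\gamma(f)\ge a-\varepsilon$; similarly $\gamma(f)\le b+\varepsilon$, so $\gamma(f)\in[a,b]$. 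For the reverse inclusion $\supseteq$, given any $c\in[a,b]$ I would exhibit an extended limit $\gamma$ with $\gamma(f)=c$: consider the subspace $\mathrm{span}\{f,\mathbf 1\}\oplus\{g\in L_\infty: ess\lim_{t\to\infty}g(t)\text{ exists}\}$, define $\gamma$ on it by sending $f\mapsto c$, $\mathbf1\mapsto1$, and $g\mapsto ess\lim g$, check that this is dominated by the sublinear functional $p(h):=ess\limsup_{t\to\infty}h(t)$ (which requires exactly $a\le c\le b$), and extend by Hahn--Banach; positivity follows from domination by $p$ and the fact that $p(-h)\le 0$ when $h\ge0$.

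For part \eqref{elb}, suppose $\gamma(f)=a$ and $f\ge a$ a.e.; I want $\gamma(fg)=a\gamma(g)$ for all $g\in L_\infty$. Replacing $f$ by $f-a\ge0$ and $g$ by $g+\|g\|_\infty\ge0$ reduces matters to showing: if $h\ge0$ and $\gamma(h)=0$, then $\gamma(hk)=0$ for all $0\le k\le M$. But then $0\le hk\le Mh$ pointwise, so positivity of $\gamma$ gives $0\le\gamma(hk)\le M\gamma(h)=0$. Unwinding the substitutions gives $\gamma((f-a)(g+\|g\|_\infty))=0$, i.e.\ $\gamma(fg)+\|g\|_\infty\gamma(f)-a\gamma(g)-a\|g\|_\infty=0$, and using $\gamma(f)=a$ and $\gamma(\mathbf1)=1$ this collapses to $\gamma(fg)=a\gamma(g)$. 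The only subtlety is bookkeeping with the constants, which is routine.

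For part \eqref{elc}, I would take the definition of the additive extension $\tilde\gamma$ on $L_0^+(0,\infty)$ as given (citing \cite[Lemma~6.5.1]{LSZ2013} as the excerpt does, so the additivity and the defining supremum formula may be assumed) and just verify the final identity $\tilde\gamma(g)=\gamma(g\chi_{(t,\infty)})$ when $g$ is bounded on $(t,\infty)$. One direction is trivial: $g\chi_{(t,\infty)}\le g$ and $g\chi_{(t,\infty)}\in L_\infty$ (it is bounded on $(t,\infty)$ and its restriction to $(0,t)$ is zero, hence it lies in $L_\infty$), so $\gamma(g\chi_{(t,\infty)})\le\tilde\gamma(g)$. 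For the reverse, take any $0\le f\le g$ with $f\in L_\infty$; then $f\chi_{(t,\infty)}\le g\chi_{(t,\infty)}$ and $f\chi_{(0,t]}$ vanishes at $+\infty$ so $\gamma(f\chi_{(0,t]})=0$, whence $\gamma(f)=\gamma(f\chi_{(t,\infty)})\le\gamma(g\chi_{(t,\infty)})$; taking the supremum over such $f$ yields $\tilde\gamma(g)\le\gamma(g\chi_{(t,\infty)})$.

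The main obstacle is the $\supseteq$ direction in part \eqref{ela}: one must carefully set up the finite-dimensional extension and check the domination by $ess\limsup$, paying attention to the case when $f$ and $\mathbf1$ are linearly dependent modulo the space of convergent functions (e.g.\ when $f$ itself is convergent), so that the formula is consistent. Parts \eqref{elb} and \eqref{elc} are short order-theoretic computations once positivity of $\gamma$ is invoked. Throughout, the only inputs used are positivity and the normalization $\gamma(g)=ess\lim g$ on convergent $g$ (in particular $\gamma(\mathbf1)=1$), plus the Hahn--Banach theorem.
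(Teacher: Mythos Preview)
Your proof of part \eqref{elc} is essentially identical to the paper's: both observe that $g\chi_{(t,\infty)}\in L_\infty$ gives the inequality $\tilde\gamma(g)\ge\gamma(g\chi_{(t,\infty)})$, and for the converse split an arbitrary competitor $0\le f\le g$ into a piece supported on $(0,t]$ (which $\gamma$ kills) and a piece dominated by $g\chi_{(t,\infty)}$. The paper writes $f\le f\chi_{(0,t]}+g\chi_{(t,\infty)}$ and applies $\gamma$; you instead write $\gamma(f)=\gamma(f\chi_{(t,\infty)})\le\gamma(g\chi_{(t,\infty)})$. Same argument.

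For parts \eqref{ela} and \eqref{elb} the paper does not give a proof at all but simply refers to \cite[Lemmas 9.3.6, 6.3.5, 6.3.1]{LSZ2013}. Your Hahn--Banach construction for the $\supseteq$ direction of \eqref{ela} and the positivity squeeze $0\le(f-a)(g+\|g\|_\infty)\le M(f-a)$ for \eqref{elb} are the standard arguments one finds in that reference, and they are correct as stated (including your acknowledgement of the degenerate case where $f$ is already convergent, in which $a=b$ and there is nothing to extend). So you have supplied self-contained proofs where the paper only cites, but the content matches.
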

\begin{proof}
We only need to prove \eqref{elc}. By assumption, $g\chi_{(t,\infty)}\in L_\infty,$ it is clear that $\tilde{\gamma}(g)\geq \gamma(g\chi_{(t,\infty)}).$ On the other hand,
for every $0\leq f\leq g$ with $f\in L_\infty,$ we have
$$ f\leq f\chi_{(0,t]}+g\chi_{(t,\infty)}\leq g.$$ Note that $\gamma$ is an extended limit, we have $\gamma(f\chi_{(0,t]})=0.$ So $\gamma(f)\leq\gamma(g\chi_{(t,\infty)}).$ Since $f$ is arbitrary, we deduce that $ \tilde{\gamma}(g)\leq \gamma(g\chi_{(t,\infty)}).$ In conclusion,
$\tilde{\gamma}(g)=\gamma(g\chi_{(t,\infty)}).$
\end{proof}
In particular, $\tilde{\gamma}(t\mapsto2^{1+1/t})=2.$ For simplicity of notation, we shall write $\gamma(g):=\tilde{\gamma}(g)$ whenever $g\in L_0^+(0,\infty).$

\subsection{Dixmier traces and zeta-function residues}
\begin{definition}\label{dix}
Let $G\in\Omega.$ A functional $\mathrm{Tr}_\omega$ on $\mathcal{M}_G$ is called a \emph{Dixmier trace} if it is a linear extension of a weight
$$\mathrm{Tr}_{\omega}(T):=\omega\left( n\mapsto\frac{1}{ G (n+1) }\sum_{j=0}^{n}\mu(j,T)\right),~~0\leq T\in \mathcal{M}_G$$
for some dilation-invariant extended limit $\omega\in EL(\mathbb{N}).$
\end{definition}

Let $g\in\mathcal{G},$ and let $ G\in\Omega$ be its primitive function. Dixmier traces on the principal ideals $ \mathcal{I}_g$ are defined as restrictions of  Dimxier traces from $\mathcal{M}_G$. However, on principal ideals $\mathcal{I}_g,$ dilation invariance of $\omega$ is redundant. Indeed, by \cite[Theorem~17]{SS2013}, for every $\omega\in EL(\mathbb{N})$, the functional
$$\mathrm{Tr}_{\omega}(T):=\omega\left( n\mapsto\frac{1}{ G (n+1) }\sum_{j=0}^{n}\mu(j,T)\right),~~0\leq T\in \mathcal{I}_g$$
extends to a Dixmier trace on $\mathcal{M}_G$.
\subsection{Extended zeta-function residues}
\begin{proposition}\label{close to L_1}
Let $g\in\mathcal{G}.$ For every $p>1,$ the inclusion $\mathcal{I}_g\subset\mathcal{L}_p$ holds. Moreover, we have $\mathcal{I}_g\nsubseteq\mathcal{L}_1.$
\end{proposition}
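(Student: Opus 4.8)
The plan is to prove the two inclusions separately, both exploiting the fact that $g\in\mathcal{G}\subset\mathcal{R}_{-1}$, i.e. that $g$ is regularly varying of index $-1$.

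\emph{Step 1: $\mathcal{I}_g\subset\mathcal{L}_p$ for every $p>1$.} Fix $T\in\mathcal{I}_g$; then $\mu(n,T)=O(g(n))$, so it suffices to show $\sum_{n\geq0}g(n)^p<\infty$ for $p>1$. Since $g\in\mathcal{R}_{-1}$, one has $g(t)=t^{-1}\ell(t)$ with $\ell\in\mathcal{R}_0$ slowly varying. The key input is Karamata's bound: for any slowly varying $\ell$ and any $\delta>0$, $\ell(t)=O(t^\delta)$ as $t\to\infty$ (this is standard, see \cite[Section~1.4.2]{regular}, and is the regularly varying analogue of the statement already quoted implicitly elsewhere in the excerpt). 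Hence for $p>1$, picking $\delta$ with $0<\delta<\tfrac{p-1}{p}$ gives $g(t)^p=t^{-p}\ell(t)^p=O\!\big(t^{-p+p\delta}\big)$ with exponent $p-p\delta>1$, so $\sum_n g(n)^p<\infty$ by comparison with a convergent $p$-series. Since $g$ is (eventually) decreasing, the sum is comparable to $\int_1^\infty g(t)^p\,dt$; either way $\|T\|_p^p=\sum_n\mu(n,T)^p\leq C^p\sum_n g(n)^p<\infty$, so $T\in\mathcal{L}_p$.

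\emph{Step 2: $\mathcal{I}_g\not\subset\mathcal{L}_1$.} Here I exhibit an operator in $\mathcal{I}_g\setminus\mathcal{L}_1$. Take $T:=\mathrm{diag}(\{g(n)\}_{n\geq0})$, which is a positive compact operator with $\mu(n,T)=g(n)$ (using that $g$ is decreasing, after possibly replacing $g$ by the shifted $\tilde g$ of Remark~\ref{rmk1}, which changes neither $\mathcal{I}_g$ nor the conclusion). Then $T\in\mathcal{I}_g$ trivially. On the other hand, $\mathrm{Tr}(|T|)=\sum_{n\geq0}g(n)$, and since $g$ is positive and decreasing this series diverges if and only if $\int_0^\infty g(t)\,dt=\infty$; but the definition of $\mathcal{G}$ (Definition~\ref{definition of function class G}) requires precisely $g\notin L_1$, so $\sum_n g(n)=+\infty$ and $T\notin\mathcal{L}_1$. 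Equivalently, the primitive $G$ of $g$ lies in $\Omega$ and diverges at infinity, which is the same statement.

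\emph{Main obstacle.} The routine part is Step~2, which is immediate from the definition $g\notin L_1$. The only point requiring care is Step~1: one must invoke the correct uniform polynomial bound for slowly varying functions (that $\ell(t)=O(t^\delta)$ for every $\delta>0$, equivalently $t^{-\varepsilon}\ell(t)\to0$), rather than merely the defining limit $g(\lambda t)/g(t)\to\lambda^{-1}$, since a pointwise limit alone does not control $g$ well enough near individual integers to sum a series. Given that this bound is part of the standard theory of regular variation cited in the paper (\cite{regular}), the proof is short; alternatively, one can bypass Karamata entirely by noting that for $p>1$ the function $t\mapsto g(t)^{p}$ is regularly varying of index $-p<-1$, hence integrable at infinity by the elementary fact that a regularly varying function of index strictly less than $-1$ lies in $L_1(a,\infty)$, and then comparing the sum with the integral via monotonicity of $g$.
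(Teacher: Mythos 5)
Your proposal is correct. The paper disposes of the first inclusion by simply citing \cite[Proposition~2.21]{GU2020}, whereas you supply a self-contained argument: write $g(t)=t^{-1}\ell(t)$ with $\ell$ slowly varying (which follows from $g\in\mathcal{G}\subset\mathcal{R}_{-1}$, via Proposition~\ref{testing}), invoke the Potter/Karamata bound $\ell(t)=O(t^\delta)$ for every $\delta>0$, and choose $\delta<(p-1)/p$ so that $g(\cdot)^p$ decays faster than a summable power. This is the standard argument behind such inclusions and is likely close to what \cite{GU2020} does, but your version has the advantage of being explicit; you also correctly flag that one genuinely needs the uniform bound from regular-variation theory rather than just the defining limit $g(\lambda t)/g(t)\to\lambda^{-1}$, and your alternative via ``$g^p\in\mathcal{R}_{-p}$ is integrable at infinity when $p>1$'' is equally valid. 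For the non-inclusion in $\mathcal{L}_1$, your proof and the paper's are essentially identical: the diagonal operator $D_g 1 = \mathrm{diag}(g(n))_{n\geq0}$ (which is in fact the Pietsch operator applied to the constant sequence) lies in $\mathcal{I}_g$ but has $\mathrm{Tr}|D_g1|=\sum_n g(n)=\infty$ precisely because the definition of $\mathcal{G}$ stipulates $g\notin L_1$ and $g$ is decreasing. No gaps.
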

\begin{proof}
The inclusion $\mathcal{I}_g\subset\mathcal{L}_p $ was established in \cite[Proposition~2.21]{GU2020}. The assertion $\mathcal{I}_g\nsubseteq\mathcal{L}_1$ follows from that $g\notin L_1$ since $g\in\mathcal{G}.$
\end{proof}
Let $G\in\Omega.$ If $G^{\prime}\in\mathcal{G},$ then the Lorentz ideal $\mathcal{M}_G$ is \emph{closed to $\mathcal{L}_1,$} that is, $\mathcal{M}_G$ is continuously embedded in $\mathcal{L}_p$ for all $p>1,$ and is not contained in $\mathcal{L}_1.$  Indeed, this assertion follows from a combination of Proposition \ref{close to L_1} and \cite[Lemma 2.2]{GS2014}.

Consider the following fully symmetric subspaces of $\mathcal{M}_G:$
$$\mathcal{L}_G:=\Big\{ T\in\mathcal{C}_0(H):\|T\|_{\mathcal{L}_G}:=\mathrm{ess}\sup_{t>0}\frac{\|T\|_{1+1/t}}{G(e^t)}<\infty\Big\}$$
and
$$\mathcal{Z}_G:=\Big\{ T\in\mathcal{C}_0(H):\|T\|_{\mathcal{Z}_G}:=\limsup_{t\to\infty}\frac{\mathrm{Tr}(|T|^{1+1/t})}{G(e^t)}<\infty\Big\}.$$
We infer that
$$\mathcal{L}_G\subset\mathcal{Z}_G\subset\mathcal{M}_G.$$
To see the inclusion $\mathcal{L}_G\subset\mathcal{Z}_G,$ we note that for a given operator $T\in\mathcal{L}_G,$ the following inequality $$\frac{\mathrm{Tr}(T^{1+1/t})}{G(e^t)}\leq G(e^t)^{1/t}\|T\|^{1+1/t}_{\mathcal{L}_G}$$
holds almost everywhere and is therefore, valid for all $t>0$ since both sides above are continuous functions of $t.$ By \cite[Lemma 4.1]{CRSS2007}, for every $\varepsilon>0,$ there is $C_{\varepsilon}>0$ such that $$G(t)<C_{\varepsilon}t^{\varepsilon},\quad t>0.$$
It follows that $G(e^t)^{1/t}\leq C_{\varepsilon}^{1/t}e^{\varepsilon},$ $t>0.$
Hence,
$$\limsup_{t\to\infty}\frac{\mathrm{Tr}(|T|^{1+1/t})}{G(e^t)}\leq  \lim_{t\to\infty}C_{\varepsilon}^{1/t}e^{\varepsilon}\|T\|_{\mathcal{L}_G}^{1+1/t}=e^{\varepsilon}\|T\|_{\mathcal{L}_G}<\infty.$$
The proof of inclusion $\mathcal{Z}_G\subset\mathcal{M}_G$ is similar to that of \cite[Theorem 4.5(i)]{CRSS2007}.

The extended zeta-function residues on ideals $\mathcal{L}_G$ were defined and studied by Gayral and Sukochev in \cite{GS2014} for a smaller class of $G,$ and the extended limits they used there possess certain invariance property. In the following, we extend this notion to $\mathcal{Z}_G$ in full generality.

\begin{definition}\label{zeta}
Let $G\in\Omega$ and let $\gamma\in EL(0,\infty).$ The following functional $\zeta_\gamma$ is said to be an \emph{extended zeta-function residue} on $\mathcal{Z}_G:$
\begin{equation*}
\zeta_\gamma(T):=\gamma\Big( t\mapsto\frac{1}{G(e^t)}\mathrm{Tr}\big(T^{1+1/t}\big)\Big),\quad 0\leq T\in\mathcal{Z}_G.
\end{equation*}
\end{definition}
\begin{lemma}
 $\zeta_\gamma$ is finite, positive homogeneous and additive on the positive cone of $\mathcal{Z}_G$.
\end{lemma}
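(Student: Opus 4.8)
I need to show three things about $\zeta_\gamma$ on the positive cone of $\mathcal{Z}_G$: finiteness, positive homogeneity, and additivity. Finiteness is essentially built into the definition of $\mathcal{Z}_G$; positive homogeneity is a routine consequence of the scaling behaviour of $\mathrm{Tr}(|\cdot|^{1+1/t})$; and additivity is the only point requiring genuine work, since $\mathrm{Tr}((S+T)^{1+1/t}) \ne \mathrm{Tr}(S^{1+1/t}) + \mathrm{Tr}(T^{1+1/t})$ in general. The strategy for additivity is to replace the zeta-function of the sum by a sum of zeta-functions up to an error that is $o(G(e^t))$ as $t\to\infty$, and then kill that error with $\gamma$ (since $\gamma$ is an extended limit, it annihilates functions tending to $0$).

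\begin{proof}
\textbf{Finiteness.} Let $0\le T\in\mathcal Z_G$. By definition of $\mathcal Z_G$, the function $t\mapsto \mathrm{Tr}(T^{1+1/t})/G(e^t)$ is bounded on a neighbourhood of $+\infty$; and on any finite interval $(0,b]$ it is bounded because $\|T^{1+1/t}\|_1 \le \|T\|_\infty^{1/t}\,\|T\|_{1+1/b}\cdot(\text{uniform factor})$ while $1/G(e^t)$ stays bounded away from $0$ and $\infty$ on $(0,b]$ (recall $G$ is increasing, concave, diverging, and $G(t)=O(t)$ at $0$, so $G(e^t)$ is bounded below by a positive constant for $t$ bounded away from $0$). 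Hence this function lies in $L_0^+(0,\infty)$ and is bounded near $\infty$, so by Lemma \ref{triple}(iii) $\zeta_\gamma(T)=\gamma(t\mapsto G(e^t)^{-1}\mathrm{Tr}(T^{1+1/t}))$ is well-defined and finite, with $\zeta_\gamma(T)\le \|T\|_{\mathcal Z_G}$ (and $\zeta_\gamma(T)\ge 0$ since the integrand is nonnegative and $\gamma$ is positive).

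\textbf{Positive homogeneity.} For $\lambda>0$ and $0\le T\in\mathcal Z_G$ we have $(\lambda T)^{1+1/t}=\lambda^{1+1/t}T^{1+1/t}$, so
\[
\frac{\mathrm{Tr}((\lambda T)^{1+1/t})}{G(e^t)}=\lambda^{1+1/t}\cdot\frac{\mathrm{Tr}(T^{1+1/t})}{G(e^t)}=\lambda\cdot\lambda^{1/t}\cdot\frac{\mathrm{Tr}(T^{1+1/t})}{G(e^t)}.
\]
Since $\lambda^{1/t}\to 1$ as $t\to\infty$ and the factor $G(e^t)^{-1}\mathrm{Tr}(T^{1+1/t})$ is bounded near $\infty$, the analogue of Lemma \ref{triple}(ii) (the computation "$\tilde\gamma(t\mapsto 2^{1+1/t})=2$'' indicated after Lemma \ref{triple}, applied with $2$ replaced by $\lambda$, or directly: a bounded function times something converging to $1$ differs from the original by a function tending to $0$, which $\gamma$ annihilates) gives $\zeta_\gamma(\lambda T)=\lambda\,\zeta_\gamma(T)$.

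\textbf{Additivity.} Fix $0\le S,T\in\mathcal Z_G$. Put $p(t):=1+1/t>1$. The key estimate is the following: there is a constant $c>0$ (depending on $S,T$) and, for each $t>1$, a bound
\[
\bigl|\mathrm{Tr}((S+T)^{p(t)})-\mathrm{Tr}(S^{p(t)})-\mathrm{Tr}(T^{p(t)})\bigr|\;\le\; c\cdot(p(t)-1)\cdot\mathrm{Tr}\bigl((S+T)^{p(t)}\bigr).
\]
Such an estimate follows from the elementary scalar inequality $0\le (a+b)^p-a^p-b^p\le (2^p-2)(a+b)^p$ for $a,b\ge0$, $p\ge 1$, combined with Hardy--Littlewood--P\'olya submajorization: since $\mu(S+T)\prec\prec\mu(S)+\mu(T)$ (triangle inequality for singular values) one controls $\mathrm{Tr}(S^{p}),\mathrm{Tr}(T^{p})$ and $\mathrm{Tr}((S+T)^p)$ by $\mathrm{Tr}((\mu(S)+\mu(T))\text{-diagonal}^p)$, and then applies the scalar inequality coordinatewise; here $2^{p(t)}-2\to 0$ like $(p(t)-1)\log 2=\frac{\log 2}{t}$ as $t\to\infty$. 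Dividing by $G(e^t)$ and using that $\mathrm{Tr}((S+T)^{p(t)})/G(e^t)$ is bounded near $\infty$ (because $S+T\in\mathcal Z_G$, which is a linear space), the right-hand side is $O(1/t)=o(1)$ as $t\to\infty$. Therefore
\[
\frac{\mathrm{Tr}((S+T)^{1+1/t})}{G(e^t)}=\frac{\mathrm{Tr}(S^{1+1/t})}{G(e^t)}+\frac{\mathrm{Tr}(T^{1+1/t})}{G(e^t)}+o(1),\quad t\to\infty,
\]
and applying the extended limit $\gamma$ (which is linear and annihilates $o(1)$ functions) yields $\zeta_\gamma(S+T)=\zeta_\gamma(S)+\zeta_\gamma(T)$.
\end{proof}

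\textbf{Main obstacle.} The delicate point is the additivity estimate: one must upgrade the scalar inequality $(a+b)^p-a^p-b^p\le(2^p-2)(a+b)^p$ to the operator trace level, which is exactly where submajorization (rather than operator monotonicity, which fails for $x\mapsto x^p$, $p>1$) is the right tool, using that $\mathcal L_p$ is fully symmetric (inequality \eqref{fully symmetric}). Once one has $2^{p(t)}-2=O(1/t)$ and knows $S+T$ lies in the linear space $\mathcal Z_G$ so that the normalized trace of $(S+T)^{p(t)}$ stays bounded, the error is genuinely $o(1)$ and $\gamma$ does the rest.
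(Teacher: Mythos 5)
The finiteness and positive-homogeneity parts are fine and essentially follow the paper's route. The additivity part has a genuine gap, and it is located exactly where you flagged the "main obstacle."

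Your claimed estimate
\[
\bigl|\mathrm{Tr}((S+T)^{p})-\mathrm{Tr}(S^{p})-\mathrm{Tr}(T^{p})\bigr|\le c\,(p-1)\,\mathrm{Tr}\bigl((S+T)^{p}\bigr)
\]
is correct as a fact, but it does \emph{not} follow from the scalar inequality plus Hardy--Littlewood--P\'olya submajorization, as you assert. Submajorization of singular values gives you upper bounds only in one direction: $\mu(S+T)\prec\prec\mu(S)+\mu(T)$ together with convexity yields $\mathrm{Tr}((S+T)^p)\le\sum_n(\mu(n,S)+\mu(n,T))^p$, and the trivial pointwise bounds give $\mathrm{Tr}(S^p)+\mathrm{Tr}(T^p)\le\sum_n(\mu(n,S)+\mu(n,T))^p$. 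These are upper bounds on both quantities by the \emph{same} comparison term, which does not control their difference. The sharp two-sided comparison you actually need is the superadditivity
\[
\mathrm{Tr}(S^{p})+\mathrm{Tr}(T^{p})\;\le\;\mathrm{Tr}\bigl((S+T)^{p}\bigr),\qquad p\ge1,\ S,T\ge0,
\]
which is a genuinely operator-theoretic inequality (McCarthy/Rotfel'd type). The best you can extract from singular values alone in the other direction is $\mu(n,S),\mu(n,T)\le\mu(n,S+T)$, giving $\mathrm{Tr}(S^p)+\mathrm{Tr}(T^p)\le2^p\,\mathrm{Tr}((S+T)^p)$, and $2^p\to2\neq1$ as $p\to1^+$, so that bound is not $1+O(p-1)$ and does not produce the $o(G(e^t))$ error you need on this side.

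Your aside that "operator monotonicity fails for $x\mapsto x^p$, $p>1$" is true but misdirected: the superadditivity is proved precisely \emph{via} operator monotonicity, just of the function $x\mapsto x^{p-1}$, which for $p=1+1/t\in(1,2]$ has exponent in $(0,1]$ and hence is operator monotone. Concretely, $0\le S\le S+T$ gives $S^{p-1}\le(S+T)^{p-1}$, so
\[
\mathrm{Tr}(S^p)=\mathrm{Tr}\bigl(S^{1/2}S^{p-1}S^{1/2}\bigr)\le\mathrm{Tr}\bigl(S^{1/2}(S+T)^{p-1}S^{1/2}\bigr)=\mathrm{Tr}\bigl((S+T)^{p-1}S\bigr),
\]
and symmetrically for $T$; summing gives $\mathrm{Tr}(S^p)+\mathrm{Tr}(T^p)\le\mathrm{Tr}((S+T)^{p-1}(S+T))=\mathrm{Tr}((S+T)^p)$. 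This is exactly the lower inequality in \cite[Lemma~8.6.3]{LSZ2013}, which the paper invokes directly (together with the matching upper bound $\mathrm{Tr}((S+T)^p)\le2^{p-1}\mathrm{Tr}(S^p+T^p)$). Once you have that two-sided inequality, your scheme of dividing by $G(e^t)$ and killing the $o(1)$ error with $\gamma$ is fine and is equivalent to the paper's argument, but as written your derivation of the key operator inequality does not go through.
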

\begin{proof}
The finiteness follows from the definition of $\mathcal{Z}_G.$
Now let $T\geq0$ and $\lambda>0.$ Since $\lambda^{1+1/t}\geq \lambda$ and $\gamma(t\mapsto\lambda^{1+1/t})=\lambda,$ it follow from Lemma \ref{triple} \eqref{elb},\eqref{elc} that $$\zeta_\gamma(\lambda T)=\gamma\left( t\mapsto\lambda^{1+1/t}\cdot\frac{1}{G(e^t)}\mathrm{Tr}\left(T^{1+1/t}\right)\right)=\lambda \zeta_\gamma(T).$$
This proves the homogeneity. Next, we recall the inequalities from \cite[Lemma~8.6.3]{LSZ2013} (the restriction $S,T\in\mathcal{M}_{1,\infty}$ is inessential there),$$ \mathrm{Tr}(S^{1+1/t}+T^{1+1/t})\leq \mathrm{Tr}((S+T)^{1+1/t})\leq 2^{1/t}\mathrm{Tr}(S^{1+1/t}+T^{1+1/t}),\quad t>0.$$
By the first inequality and the definition of $\zeta_\gamma,$ we have
$$\zeta_\gamma(S)+\zeta_\gamma(T)\leq \zeta_\gamma(S+T).$$
Since $\gamma(t\mapsto 2^{1/t})=1,$ by Lemma \ref{triple} \eqref{elb},\eqref{elc} and the second inequality, we have
$$\zeta_\gamma(S+T)\leq \gamma\left(t\mapsto2^{1/t}\cdot \frac{1}{G(e^t)}\mathrm{Tr}(S^{1+1/t}+T^{1+1/t})\right)=\zeta_{\gamma}(S)+\zeta_{\gamma}(T).$$
Finally, $$\zeta_{\gamma}(S+T)=\zeta_{\gamma}(S)+\zeta_{\gamma}(T).$$ This proves the additivity.
\end{proof}
The linear extension of $\zeta_\gamma$ on $\mathcal{Z}_G$ should be understood in the following sense
$$ \zeta_\gamma(T):=\zeta_\gamma(\Re(T)_+)-\zeta_\gamma(\Re(T)_-)+i\zeta_\gamma(\Im(T)_+)-i\zeta_\gamma(\Im(T)_-),\quad T\in\mathcal{Z}_G.$$
Let $G\in\Omega$. For every $V\in\mathcal{L}_{\infty},$ we define
\begin{equation}\label{vzeta}
 \zeta_{\gamma,V}(T):=\gamma\left(t\mapsto \frac {1}{G(e^t)}\mathrm{Tr}\left(VT^{1+1/t}\right)\right),\quad0\leq T\in\mathcal{Z}_G.
\end{equation}
Similarly to \cite[Theorem 8.6.5]{LSZ2013}, it can be shown that
\begin{equation}\label{Vzeta}
\zeta_{\gamma,V}(T)=\zeta_{\gamma}(VT),\quad V\in\mathcal{L}_{\infty},\quad 0\leq T\in\mathcal{Z}_G.
\end{equation}

\begin{proposition}\label{scalar}
Let $g\in\mathcal{G}$ and let $G\in\Omega$ be its primitive. If $\mathcal{I}_g\subset\mathcal{Z}_G,$ then every extended zeta-function residue on $\mathcal{I}_g$ is proportional to a Dixmier trace.
\end{proposition}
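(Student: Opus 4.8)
The plan is to show that the restriction $\zeta_\gamma|_{\mathcal I_g}$ is a continuous, positive, singular trace which moreover is monotone with respect to Hardy--Littlewood submajorisation, and then to invoke the classification of traces on $\mathcal I_g$ to conclude that such a functional is a nonnegative scalar multiple of a Dixmier trace.

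\emph{Step 1: $\zeta_\gamma|_{\mathcal I_g}$ is a continuous positive singular trace.} For $0\le T\in\mathcal Z_G$ and a unitary $U$ one has $(U^*TU)^{1+1/t}=U^*T^{1+1/t}U$, hence $\mathrm{Tr}((U^*TU)^{1+1/t})=\mathrm{Tr}(T^{1+1/t})$ and $\zeta_\gamma(U^*TU)=\zeta_\gamma(T)$; combined with the additivity, homogeneity and the prescribed linear extension of $\zeta_\gamma$, this makes $\zeta_\gamma$ a trace on $\mathcal Z_G$, and in particular on $\mathcal I_g\subset\mathcal Z_G$. Positivity is immediate from $\gamma\in EL(0,\infty)$; since $\mathcal I_g\not\subseteq\mathcal L_1$ (Proposition~\ref{close to L_1}), \cite[Lemma 15(a)]{spectral} then shows $\zeta_\gamma|_{\mathcal I_g}$ is singular. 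For continuity, observe that $\mathrm{diag}(g)\in\mathcal I_g\subset\mathcal Z_G$, and for $0\le T\in\mathcal I_g$ the pointwise bound $\mu(T)\le\|T\|_{\mathcal I_g}\,\mu(\mathrm{diag}(g))$ gives $\mathrm{Tr}(T^{1+1/t})\le\|T\|_{\mathcal I_g}^{1+1/t}\,\mathrm{Tr}(\mathrm{diag}(g)^{1+1/t})$ for every $t>0$; dividing by $G(e^t)$ and applying Lemma~\ref{triple}(ii),(iii) (using $\|T\|_{\mathcal I_g}^{1+1/t}\ge\|T\|_{\mathcal I_g}$ and $\gamma(t\mapsto\|T\|_{\mathcal I_g}^{1+1/t})=\|T\|_{\mathcal I_g}$) yields $\zeta_\gamma(T)\le\|T\|_{\mathcal I_g}\,\zeta_\gamma(\mathrm{diag}(g))$. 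Splitting a general $T\in\mathcal I_g$ into the positive parts of $\Re T$ and $\Im T$ and using that their quasi-norms are controlled by $\|T\|_{\mathcal I_g}$ gives $|\zeta_\gamma(T)|\le C\,\zeta_\gamma(\mathrm{diag}(g))\,\|T\|_{\mathcal I_g}$.

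\emph{Step 2: $\zeta_\gamma|_{\mathcal I_g}$ respects submajorisation.} If $0\le S,T\in\mathcal I_g$ with $S\prec\prec T$, then by \eqref{fully symmetric} we have $\mathrm{Tr}(S^{1+1/t})\le\mathrm{Tr}(T^{1+1/t})$ for all $t>0$; applying the monotone functional $\gamma$ gives $\zeta_\gamma(S)\le\zeta_\gamma(T)$. Thus $\zeta_\gamma|_{\mathcal I_g}$ is a continuous positive trace that is fully symmetric (in the sense inherited from the fully symmetric space $\mathcal Z_G$).

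\emph{Step 3: identification with a Dixmier trace.} Appealing to the correspondence between continuous singular traces on $\mathcal I_g$ and continuous shift-invariant functionals on $\ell_\infty$ from \cite{principal} (together with the fact, via \cite{SS2013}, that functionals of the Dixmier form on $\mathcal I_g$ are exactly the restrictions of Dixmier traces from $\mathcal M_G$, and that these span the positive fully symmetric traces), Steps 1 and 2 force $\zeta_\gamma|_{\mathcal I_g}$ to be a nonnegative scalar multiple of a Dixmier trace. The scalar is $\zeta_\gamma(\mathrm{diag}(g))$: every Dixmier trace $\mathrm{Tr}_\omega$ satisfies $\mathrm{Tr}_\omega(\mathrm{diag}(g))=\omega\big(n\mapsto G(n+1)^{-1}\sum_{j=0}^n g(j)\big)=1$, since $\sum_{j=0}^n g(j)\sim G(n)\sim G(n+1)$; and if $\zeta_\gamma(\mathrm{diag}(g))=0$ then Step 2 makes $\zeta_\gamma$ vanish on the positive cone, hence $\zeta_\gamma\equiv0$, which is trivially proportional to a Dixmier trace. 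The main obstacle is Step~3: it rests on the structural fact that, among continuous singular traces on the \emph{principal} ideal $\mathcal I_g$, the positive fully symmetric ones are precisely the scalar multiples of Dixmier traces. Steps 1 and 2 are routine given the lemmas already at hand, and the finer quantitative (Tauberian) information — the actual value of the constant and the Banach limit representing $\zeta_\gamma$ — is deferred to Section~4.
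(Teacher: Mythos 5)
Your Steps 1 and 2 are sound, and Step 2 in particular captures the same structural fact the paper exploits: monotonicity of $\zeta_\gamma$ under Hardy--Littlewood submajorization, i.e.\ full symmetry. But Step 3 punts precisely where the real work lies, and you concede as much. There is no theorem in the literature you cite classifying positive fully symmetric traces on the \emph{principal} ideal $\mathcal I_g$ as scalar multiples of Dixmier traces. The Levitina--Usachev correspondence in \cite{principal} pairs continuous singular traces on $\mathcal I_g$ with shift-invariant functionals on $\ell_\infty$, but shift-invariance does not by itself single out the Banach limits that produce Dixmier traces; moreover $\mathcal I_g$ is not fully symmetric as an ideal, so ``fully symmetric trace on $\mathcal I_g$'' is not a notion to which any off-the-shelf classification applies.

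The paper's proof supplies exactly the missing reduction. Since $\mathcal Z_G$ is a fully symmetric subspace of $\mathcal M_G$ containing $\mathcal I_g$, and since $\mathrm{diag}(g)\in\mathcal I_g$, the description \eqref{second definition for lorentz ideal} of $\mathcal M_G$ as $\{T:\mu(T)\prec\prec c\,g\}$ forces $\mathcal Z_G=\mathcal M_G$. The zeta-function residue is therefore a fully symmetric functional on the entire Lorentz (Marcinkiewicz) ideal $\mathcal M_G$ (your Step 2 gives full symmetry there), and the classification of fully symmetric functionals on Marcinkiewicz spaces as Dixmier traces, \cite[Theorem~11]{KSS2011}, finishes the argument. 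In short: you need to enlarge the domain of $\zeta_\gamma$ from $\mathcal I_g$ to $\mathcal M_G$ via the identity $\mathcal Z_G=\mathcal M_G$ before any known classification theorem applies; without that step the proof is incomplete.
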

\begin{proof}
As is was mentioned above, $\mathcal{Z}_G$ is fully symmetric subspace of $\mathcal{M}_G.$ By \eqref{second definition for lorentz ideal}, the assumption $\mathcal{I}_g\subset\mathcal{Z}_G$ implies that $\mathcal{M}_G=\mathcal{Z}_G.$  By \eqref{fully symmetric}, every extended zeta-function residue is a fully symmetric functional on $\mathcal{M}_G.$  Using \cite[Theorem 11]{KSS2011}, we deduce that it is a scalar multiple of some Dixmier trace.
\end{proof}

Every normalised zeta-function residue is a Dixmier trace, but the converse is not true even on the well-studied ideal $\mathcal{L}_{1,\infty}$, see, e.g. \cite[Theorem~3.10]{SUZ2017}.

In this paper, we need a notion of measurability.
\begin{definition}
Let $g\in\mathcal{G}.$ An operator $T\in\mathcal{I}_g$ is said to be \emph{zeta-measurable} (respectively, \emph{Dixmier-measurable}) if all zeta-function residues (respectively, Dixmier traces) take the same value on $T.$
\end{definition}

The following theorem was proved in \cite[Theorem~20]{SS2013}.
\begin{theorem}\label{Dixmier measurable thm}
 Let $g\in\mathcal{G}$ and let $G\in \Omega$ be its primitive. Then an operator $T$ in $\mathcal{I}_g$ is Dixmier-measurable if and only if there exists the limit
$$\lim_{n\rightarrow\infty}\frac{1}{ G(n+1)}\sum_{j=0}^{n} \lambda(j,T).$$
\end{theorem}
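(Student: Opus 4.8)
The plan is to diagonalise $T$, reduce the statement to a question about one bounded real sequence, and then use the slow variation of $G$ to show that the particular ordering of the eigenvalues is invisible to every extended limit. First I would pass to the diagonal operator $A:=\mathrm{diag}(\lambda(T))$. Since $\mathcal I_g$ is logarithmically closed and $\mathrm{diag}(\lambda(T))\prec\prec_{\log}T$ by \eqref{weyl}, the operator $A$ lies in $\mathcal I_g$; in particular $|\lambda(n,T)|=\mu(n,A)\le C\,g(n)$ for some $C>0$ by \eqref{quasinorm}. By the description of traces on logarithmically closed ideals in \cite{spectral}, every trace $\varphi$ on $\mathcal I_g$ satisfies $\varphi(T)=\varphi(A)$, so $\mathrm{Tr}_\omega(T)=\mathrm{Tr}_\omega(A)$ for all $\omega\in EL(\mathbb N)$. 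Hence $T$ is Dixmier-measurable if and only if $A$ is, with the same common value, and it suffices to treat $T=A=\mathrm{diag}(x)$ with $x:=\lambda(T)$, so that $|x_n|$ is non-increasing and $|x_n|\le Cg(n)$.

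The key step is to prove that for every $\omega\in EL(\mathbb N)$ and every sequence $y$ with $0\le y_j\le C\,g(j)$ one has
$$\mathrm{Tr}_\omega(\mathrm{diag}(y))=\omega\Big(n\mapsto\tfrac1{G(n+1)}\sum_{j=0}^n y_j\Big).$$
By Definition \ref{dix} the left-hand side equals $\omega\big(n\mapsto\frac1{G(n+1)}\sum_{j=0}^n y^\ast_j\big)$, where $y^\ast$ denotes the non-increasing rearrangement of $y$, so it is enough to show that $\sum_{j=0}^n(y^\ast_j-y_j)=o(G(n))$. If $M_n$ is a set of $n+1$ indices carrying the $n+1$ largest values of $y$, then $0\le\sum_{j=0}^n y^\ast_j-\sum_{j=0}^n y_j\le\sum_{j\in M_n,\,j>n}y_j\le(n+1)\sup_{j>n}y_j\le C(n+1)g(n+1)$, because $g$ is non-increasing; and $(n+1)g(n+1)=o(G(n))$, which follows from \eqref{cond02} via $t\,g(2t)\le G(2t)-G(t)=o(G(t))$ together with $G(n+1)\sim G(n)$. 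This is the heart of the argument — and the only place where the slow variation of $G$ is genuinely used: it says that reshuffling the eigenvalue order into the decreasing orders of the positive and negative parts of $\Re\lambda(T)$ and $\Im\lambda(T)$ changes the Cesàro-type averages by $o(1)$.

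Next I would decompose the normal operator $A=\mathrm{diag}\big((\Re x)_+\big)-\mathrm{diag}\big((\Re x)_-\big)+i\,\mathrm{diag}\big((\Im x)_+\big)-i\,\mathrm{diag}\big((\Im x)_-\big)$; each of the four sequences is non-negative and bounded by $|x_j|\le Cg(j)$. Applying the identity of the previous paragraph to each summand and using linearity of $\omega$ together with $(\Re x)_+-(\Re x)_-=\Re x$ and $(\Im x)_+-(\Im x)_-=\Im x$, I obtain
$$\mathrm{Tr}_\omega(T)=\mathrm{Tr}_\omega(A)=\omega(\{a_n\})+i\,\omega(\{b_n\}),\qquad a_n+ib_n=\tfrac1{G(n+1)}\sum_{j=0}^n\lambda(j,T),$$
where $a_n,b_n$ are real and $\{a_n\},\{b_n\}\in\ell_\infty$ because $T\in\mathcal M_G$.

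Finally, every Dixmier trace on $\mathcal I_g$ is of the form $\mathrm{Tr}_\omega$ for some $\omega\in EL(\mathbb N)$, and conversely every such $\mathrm{Tr}_\omega$ extends to a Dixmier trace (Subsection 2.5). Hence $T$ is Dixmier-measurable if and only if $\omega\mapsto\omega(\{a_n\})+i\,\omega(\{b_n\})$ is constant on $EL(\mathbb N)$, equivalently, taking real and imaginary parts, if and only if both $\omega\mapsto\omega(\{a_n\})$ and $\omega\mapsto\omega(\{b_n\})$ are constant. Since $\{\omega(z):\omega\in EL(\mathbb N)\}=[\liminf_n z_n,\limsup_n z_n]$, this happens exactly when $\lim_n a_n$ and $\lim_n b_n$ exist, i.e. when $\lim_n\frac1{G(n+1)}\sum_{j=0}^n\lambda(j,T)$ exists. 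I expect the rearrangement estimate in the second paragraph to be the only real obstacle; the reductions and the final bookkeeping are routine given the facts recalled in Section 2.
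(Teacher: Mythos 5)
The paper does not prove this theorem; it is imported directly from \cite[Theorem~20]{SS2013} with a citation. So there is no in-paper argument to compare against, and your proposal should be judged on its own merits.

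Your argument is correct and essentially reconstructs the standard route. The two load-bearing ingredients are both in place: (a) the reduction to the diagonal operator $A=\mathrm{diag}(\lambda(T))$ via Weyl's inequality \eqref{weyl}, logarithmic closure of $\mathcal I_g$, and the spectrality result of \cite{spectral}; and (b) the rearrangement estimate. In step (b) your bookkeeping is right: if $M_n$ is an index set realising the $n{+}1$ largest entries of $y$, then
$$\sum_{j=0}^n y^*_j-\sum_{j=0}^n y_j=\sum_{j\in M_n,\,j>n}y_j-\sum_{j\le n,\,j\notin M_n}y_j\le (n+1)\sup_{j>n}y_j\le C(n+1)g(n+1),$$
and $(n+1)g(n+1)=o(G(n))$. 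For the last estimate your chain via $t\,g(2t)\le G(2t)-G(t)$ works but is slightly indirect; the cleanest route is $\tfrac{t}{2}\,g(t)\le\int_{t/2}^t g=G(t)-G(t/2)=o(G(t/2))=o(G(t))$, using \eqref{cond02} and monotonicity of $g$, which immediately gives $(n+1)g(n+1)=o(G(n+1))\sim o(G(n))$ without invoking \eqref{cond1}. The decomposition into the four non-negative diagonal pieces, the use of linearity and of $\omega$ being real on $\ell_\infty$, the appeal to Subsection~2.5 (so that Dixmier-measurability is equivalent to constancy of $\omega\mapsto\mathrm{Tr}_\omega(T)$ over all of $EL(\mathbb N)$, not just the dilation-invariant ones), and the final application of $\{\omega(z):\omega\in EL(\mathbb N)\}=[\liminf z,\limsup z]$ to the real and imaginary parts separately are all handled correctly. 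One could streamline by applying the rearrangement estimate directly to the complex sequence $\lambda(T)$ rather than splitting into four parts, but your version has the small advantage of working only with genuinely non-negative sequences where $y^*=\mu(\mathrm{diag}(y))$ is immediate.
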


\section{Lidskii formulae}
In this section, we prove an analogue of the classical Lidskii formula for continuous traces on $\mathcal{I}_g$. This result extends and generalises \cite[Theorem~4.1]{SSUZ2015} where it was proved for weak-trace class ideal $\mathcal L_{1,\infty}$.

Following \cite{principal}, for every $g\in\mathcal{G}$, we define the associated Piestch operator $D_g:\ell_\infty\rightarrow\mathcal{I}_g$ by setting
\begin{equation}\label{pop}
D_g(x_{0},x_{1},\cdots,x_n\cdots)=\mbox{diag} (x_{0}g(2^0),\underbrace{x_{1}g(2^1),x_{1}g(2^1)}_{2 ~\mathrm{times}},\cdots ,\underbrace{x_{n}g(2^n),\cdots,x_{n} g(2^n)}_{2^n~\mathrm{times}},\cdots).
\end{equation}

The following ono-to-one correspondence was established in \cite{principal} (see Theorem 3.5 there).
\begin{theorem}\label{biject}
Let $g\in\mathcal{G}$. There is a bijection between continuous singular traces on $\mathcal{I}_g$ and continuous shift-invariant functionals on $\ell_\infty$, given by the formulae
$$ \varphi(T)=\theta\Big (n\mapsto\frac{1}{2^n g(2^n)}\sum_{j=2^n-1}^{2^{n+1}-2}\mu(j,T) \Big),\quad 0\leq T\in\mathcal{I}_g,$$
$$\theta(x)=\varphi\circ D_{g} (x),~x\in\ell_\infty.$$
\end{theorem}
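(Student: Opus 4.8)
The plan is to establish Theorem~\ref{biject} by transporting the known bijection on $\mathcal{L}_{1,\infty}$ (\cite[Theorem~4.1]{SSUZ2015}) through the Pietsch operator $D_g$, exploiting the fact that $D_g$ is a positive linear isomorphism between $\ell_\infty$ and a ``diagonal core'' of $\mathcal{I}_g$ which intertwines the relevant structures. First I would verify the basic properties of $D_g$: it maps $\ell_\infty$ into $\mathcal{I}_g$ (immediate from \eqref{cond1}, since $g(2^n)$ controls $\mu(j,\cdot)$ for $j$ in the $n$-th dyadic block), it is bounded for the quasi-norm \eqref{quasinorm}, it is positivity-preserving, and on the positive cone the composition $T\mapsto\bigl(n\mapsto \tfrac{1}{2^n g(2^n)}\sum_{j=2^n-1}^{2^{n+1}-2}\mu(j,T)\bigr)$ followed by $D_g$ recovers $T$ up to a ``dyadic averaging'' that any continuous singular trace cannot see. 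The key technical input here is a block-averaging lemma: for $0\le T\in\mathcal{I}_g$, the operator $T$ is equivalent, modulo the kernel of every continuous singular trace, to $D_g$ applied to its block-averaged singular value sequence; this rests on the regular-variation property $g\in\mathcal{R}_{-1}$ together with condition \eqref{cond1}, which guarantees that averaging $\mu(T)$ over dyadic blocks produces an operator submajorized by a constant multiple of the original and vice versa.

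Next I would check that the correspondence is well-defined in both directions. Given a continuous shift-invariant functional $\theta$ on $\ell_\infty$, the formula $\varphi:=\theta\circ(\text{block-average})$ defines a functional on the positive cone of $\mathcal{I}_g$; I must show it is additive there (using the standard fact that $\mu(S+T)$ and $\mu(S)+\mu(T)$ have the same dyadic block sums up to a shift, which shift-invariance of $\theta$ absorbs), positively homogeneous (clear), unitarily invariant (since it depends only on singular values), continuous for the quasi-norm, singular (block averages of finite-rank operators are eventually zero, hence in the domain where $\theta$ acts as the limit, giving $0$), and that its linear extension is a trace --- here I would cite the criterion that a continuous unitarily-invariant additive homogeneous functional extends to a trace, which is available for logarithmically closed ideals and in particular for $\mathcal{I}_g$. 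Conversely, given a continuous singular trace $\varphi$ on $\mathcal{I}_g$, setting $\theta:=\varphi\circ D_g$ yields a continuous linear functional on $\ell_\infty$; shift-invariance $\theta\circ S_r=\theta$ follows because $D_g\circ S_r$ produces a diagonal operator differing from $D_g$ only by a rank-one-block reshuffling that is a unitary conjugate of $D_g$ up to a finite-rank perturbation, which $\varphi$ (being a singular trace) does not detect. That the two assignments are mutually inverse is then a matter of composing $D_g$ with the block-average map and invoking the block-averaging lemma once more on the positive cone, then extending by linearity.

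The main obstacle I expect is the block-averaging lemma and, relatedly, pinning down exactly which diagonal reshufflings a continuous singular trace ignores. On $\mathcal{L}_{1,\infty}$ this is the content of the delicate arguments in \cite{SSUZ2015} (and of the Pietsch-correspondence machinery), and the subtlety is that continuity is essential: without it the correspondence fails, so every estimate must be quantitative in the quasi-norm \eqref{quasinorm}. Concretely, one needs that for $0\le T\in\mathcal{I}_g$ the difference between $T$ and $D_g$ of its block-averaged sequence is not merely in $\mathcal{I}_g$ but is \emph{small} in quasi-norm after discarding a head, and that the relevant submajorization estimates degrade controllably; the function-class hypotheses $g\in\mathcal{G}$ (decreasing, $g\notin L_1$, and \eqref{cond1}) are precisely what make $2^n g(2^n)\asymp G(2^n)$ comparable to $\sum_{j<2^n}g(j)$, which is the arithmetic backbone of the whole comparison. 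Since the excerpt explicitly attributes this theorem to \cite{principal}, in the write-up I would present the argument as an adaptation of \cite[Theorem~3.5]{principal} and \cite[Theorem~4.1]{SSUZ2015}, isolating the block-averaging lemma as the one place where the regular-variation properties of $g$ enter, and otherwise proceeding along the now-standard Pietsch-correspondence lines.
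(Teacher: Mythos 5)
The paper does not give its own proof of Theorem~\ref{biject}: it is imported wholesale from \cite[Theorem~3.5]{principal}, with the preceding sentence explicitly crediting that source. So there is no internal argument in the present paper to compare yours against, and your instinct to ``present the argument as an adaptation of \cite[Theorem~3.5]{principal} and \cite[Theorem~4.1]{SSUZ2015}'' is exactly what the authors do.

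That said, your sketch contains one concrete and non-repairable misstep that is worth pointing out, because it is the hinge of the whole correspondence. You claim that shift-invariance of $\theta=\varphi\circ D_g$ is immediate, on the grounds that $D_g(S_rx)$ and $D_g(x)$ are ``unitary conjugates up to a finite-rank perturbation,'' which a singular trace cannot detect. This is false, even for $\mathcal L_{1,\infty}$. Take $x=(1,0,1,0,\dots)$. Then $D_g(x)$ has nonzero eigenvalues $g(2^{2k})$ each with multiplicity $2^{2k}$, while $D_g(S_rx)$ has nonzero eigenvalues $g(2^{2k+1})$ each with multiplicity $2^{2k+1}$; these spectra are disjoint and no unitary conjugation, even modulo finite rank, relates the two operators. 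What actually makes $\varphi$ insensitive to the shift is not unitary equivalence but the asymptotic relation $2^{n+1}g(2^{n+1})\sim 2^n g(2^n)$ supplied by \eqref{cond1}, combined with the fact that a \emph{continuous} singular trace depends only, up to vanishing error, on the dyadic block sums of the singular value sequence --- i.e.\ exactly the block-averaging lemma you quarantine elsewhere as the ``main obstacle.'' In other words, shift-invariance of $\theta$ is not a cheap preliminary you can dispose of before invoking that lemma; it is itself a consequence of it (or of a parallel estimate of the same strength, such as the dilation-type comparison that underlies Lemma~\ref{shift lemma}). Your outline thus implicitly uses the block-averaging lemma twice --- once for the inverse map and once, unacknowledged, for the forward map --- and the finite-rank/unitary shortcut would collapse if one tried to make it precise.

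A smaller point: ``transporting the known bijection on $\mathcal L_{1,\infty}$ through $D_g$'' is loose wording, since $D_g$ maps $\ell_\infty$ to $\mathcal I_g$ rather than relating $\mathcal L_{1,\infty}$ to $\mathcal I_g$; what is being transported is the proof template, not the bijection itself. Apart from these issues, the overall shape of your plan (verify $D_g$ maps into $\mathcal I_g$ and is quasi-norm bounded; show the two maps are well-defined and mutually inverse; localize all the hard analysis in a single quantitative comparison between $T$ and $D_g$ of its block averages, controlled by regular variation of $g$) is consistent with the Pietsch-correspondence literature that the cited theorem belongs to.
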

The following auxiliary result is useful in the sequel (see \cite[Lemma 2.19]{GU2020} ).
\begin{lemma}\label{shift lemma}
Let $g\in\mathcal{G}.$  If $f\in L_{\infty}(0,\infty)$ is such that
$$\int_{0}^{2^n}f(s)\mathrm{~d}s=O(2^ng(2^n)),\quad n\geq 0,$$
then the sequence
$$\left\{ \frac{1}{2^ng(2^n)}\int_{2^n}^{2^{n+1}}f(s)\mathrm{~d}s\right\}_{n\geq0}$$
belongs to $\mathrm{Range}(I-S_r)+c_0(\mathbb{N}),$ where $c_0(\mathbb{N})$ is the set of bounded sequences vanishing at infinity.
\end{lemma}

First, we prove the analogue of Lisdskii's formula for self-adjoint operators.
\begin{theorem}\label{self}
Let $g\in\mathcal{G}.$ If $T=T^{\ast}\in\mathcal{I}_g,$ and if $\varphi$ is a continuous singular trace on $\mathcal{I}_g,$ then
\begin{equation*}
  \varphi(T)=\theta\left(\left\{\frac{1}{2^{n}g(2^n)}\sum_{j=2^{n}-1}^{2^{n+1}-2}\lambda(j,T)\right\}_{n\geq0}\right),
\end{equation*}
where $\theta:=\varphi\circ D_g$ is the shift-invariant functional from Theorem \ref{biject}.
\end{theorem}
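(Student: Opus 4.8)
The plan is to reduce the self-adjoint case to the already-established correspondence for positive operators (Theorem \ref{biject}) by writing $T = T_+ - T_-$ and controlling the error introduced when one replaces the eigenvalue sequence $\lambda(T)$ (which interleaves the positive eigenvalues of $T_+$ and the negative eigenvalues of $-T_-$ in order of decreasing absolute value) by the concatenation of $\lambda(T_+)$ and $-\lambda(T_-)$. Since $\varphi$ is linear and $T_\pm \in \mathcal{I}_g$ are positive, Theorem \ref{biject} gives
\begin{equation*}
\varphi(T) = \varphi(T_+) - \varphi(T_-) = \theta\Big(n\mapsto \frac{1}{2^n g(2^n)}\sum_{j=2^n-1}^{2^{n+1}-2}\big(\mu(j,T_+) - \mu(j,T_-)\big)\Big),
\end{equation*}
using that $\mu(j,T_\pm) = \lambda(j,T_\pm)$ for positive operators. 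So the task is to show that the sequence appearing here differs from the target sequence $\big\{\frac{1}{2^n g(2^n)}\sum_{j=2^n-1}^{2^{n+1}-2}\lambda(j,T)\big\}_{n\ge0}$ by something that $\theta$ annihilates, i.e.\ an element of $\mathrm{Range}(I-S_r) + c_0(\mathbb{N})$ (recall $\theta$ is continuous and shift-invariant, hence vanishes there by the defining correspondence).

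The key combinatorial point is that partial sums of the two eigenvalue enumerations agree up to a bounded shift in index. Precisely, if $T_+$ has $p$ eigenvalues among the first $n$ eigenvalues of $T$ (counted by absolute value) and $T_-$ has $q = n - p$ of them, then $\sum_{j=0}^{n-1}\lambda(j,T) = \sum_{j=0}^{p-1}\mu(j,T_+) - \sum_{j=0}^{q-1}\mu(j,T_-)$, and one estimates the difference of these Cesàro-type averages using the monotonicity of $\mu(T_\pm)$ together with the regularity of $g$. The cleanest route is to pass to the integral form: let $f_\pm$ be the singular value functions of $T_\pm$ (as step functions on $(0,\infty)$) and let $f$ be the function built from $\lambda(T)$. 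Then $\int_0^{2^n} f = \int_0^{a_n} f_+ - \int_0^{b_n} f_-$ for suitable $a_n + b_n = 2^n$, and since $f_\pm(s) = O(g(s))$ one has $\int_0^{a_n} f_+ - \int_0^{2^n}f_+\chi_{\{f_+ \text{ relevant}\}} = O(2^n g(2^n))$ by the slow variation \eqref{cond02} of $G$ (shifting the upper limit by a bounded multiplicative factor changes $G$ by a $1+o(1)$ factor). Applying Lemma \ref{shift lemma} to the error function then places the discrepancy in $\mathrm{Range}(I-S_r) + c_0(\mathbb{N})$, which $\theta$ kills.

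I expect the main obstacle to be the bookkeeping in the previous paragraph: making rigorous the claim that the index splitting $a_n, b_n$ (which depends on how the positive and negative spectra interleave, something we have no control over) only perturbs each block sum $\frac{1}{2^n g(2^n)}\sum_{j=2^n-1}^{2^{n+1}-2}(\cdots)$ by an amount whose sequence lies in $\mathrm{Range}(I-S_r)+c_0(\mathbb{N})$. The telescoping structure of $\mathrm{Range}(I-S_r)$ is what saves us — a bounded error in the \emph{cumulative} sum, i.e.\ $\sum_{k=0}^{n} (\text{block}_k^{\mathrm{target}} - \text{block}_k^{\mathrm{concat}}) = O(1)$ after normalization, is exactly the statement that the block-difference sequence is of the form $(I-S_r)(\text{bounded}) + c_0$. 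So the heart of the argument is verifying that $\frac{1}{2^n g(2^n)}\big(\int_0^{2^n} f - \int_0^{a_n} f_+ + \int_0^{b_n} f_-\big)$ is bounded in $n$ and, in fact, converges, which follows once one notes that $f, f_+, f_-$ are all dominated by $c\cdot g$ and that $\int_{b_n}^{2^n} g(s)\,ds = G(2^n) - G(b_n) = o(G(2^n)) = o(2^n g(2^n))$ whenever $b_n/2^n$ is bounded away from $0$ — and when it is not, the roles of $T_+$ and $T_-$ are swapped and the symmetric estimate applies. Interchanging $S$ and $T$ is not needed here; once the self-adjoint identity is proven the theorem is complete.
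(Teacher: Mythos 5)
Your high-level plan is exactly the paper's: split $T=T_+-T_-$, apply Theorem~\ref{biject} to each part, and show that replacing the ``merged'' eigenvalue sequence $\lambda(T)$ by the ``concatenated'' one $\mu(T_+)-\mu(T_-)$ produces a discrepancy killed by $\theta$ via Lemma~\ref{shift lemma}. Where you go astray is in the quantitative justification of that discrepancy estimate.

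The specific flaw is the chain ``$G(2^n)-G(b_n)=o(G(2^n))=o(2^ng(2^n))$''. The second equality is false for every $g\in\mathcal{G}$: since $g\in\mathcal{R}_{-1}$ and $g\notin L_1$, Karamata's theorem gives $G(t)/(tg(t))\to\infty$, so $o(G(2^n))$ is a strictly \emph{weaker} conclusion than $o(2^ng(2^n))$ (for instance, with $g_0(t)=1/(t+2)$ one has $G(2^n)\asymp n$ but $2^ng_0(2^n)\asymp 1$). Your dichotomy ``$b_n/2^n$ bounded away from $0$ or roles swapped'' also does not by itself control the bad tail $\int_{a_n}^{2^n}f_+$ when $a_n$ is small. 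What actually makes the bound work is not the slow variation of $G$ but a pointwise estimate on the singular values past the splitting index: if only $a_n$ of the first $2^n$ eigenvalues of $T$ come from $T_+$, then $\mu(j,T_+)\le\mu(2^n-1,T)$ for all $j\ge a_n$, whence $\int_{a_n}^{2^n}f_+\le 2^n\mu(2^n-1,T)=O(2^ng(2^n))$, and symmetrically for $T_-$. Packaged, this is precisely the inequality $\bigl|\sum_{j=0}^{n}\lambda(j,T)-(\mu(j,T_+)-\mu(j,T_-))\bigr|\le 2(n+1)\mu(n,T)=O(ng(n))$ which the paper quotes from \cite[Theorem 5.2.7]{LSZ2013}; feeding this into Lemma~\ref{shift lemma} finishes the argument. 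So your proposal is structurally right and easily repairable, but as written the key estimate rests on a false asymptotic comparison between $G$ and $tg(t)$.
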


\begin{proof}\rm{}
Since $\varphi$ and $\theta$ are linear, it follows from Theorem \ref{biject}, that
\begin{equation}\label{formu}
  \varphi(T)=\varphi(T_+)-\varphi(T_-)=\theta\left(\left\{\frac{1}{2^{n}g(2^n)}\sum_{j=2^{n}-1}^{2^{n+1}-2}\mu(j,T_{+})-\mu(j,T_{-})\right\}_{n\geq0}\right),
\end{equation}
By \cite[Theorem 5.2.7]{LSZ2013}, we have
\begin{equation*}
  \left|\sum_{j=0}^{n}\lambda(j,T)-\mu(j,T_{+})-\mu(j,T_{-})\right|\leq 2(n+1)\mu(n,T)=O(ng(n)).
\end{equation*}
Note that $\theta$ is shift invariant and continuous.
Using Lemma \ref{shift lemma}, we obtain
$$ \theta\left(\left\{\frac{1}{2^n g(2^n)}\sum_{j=2^{n}-1}^{2^{n+1}-2}\lambda(j,T)-\mu(j,T_{+})-\mu(j,T_{-})\right\}_{n\geq0}\right)=0.$$
By linearity, we deduce that
$$\theta\left(\left\{\frac{1}{2^n g(2^n)}\sum_{j=2^{n}-1}^{2^{n+1}-2}\lambda(j,T)\right\}_{n\geq0}\right)=\theta\left(\left\{\frac{1}{2^n g(2^n)}\sum_{j=2^{n}-1}^{2^{n+1}-2}\mu(j,T_{+})-\mu(j,T_{-})\right\}_{n\geq0}\right).$$
Combining this with \eqref{formu} proves the assertion.
\end{proof}
The following analogue of Lidskii's formula is the first main result of the paper.
\begin{theorem}\rm{}\label{Lid}
 Let $g\in\mathcal{G}.$ If $\varphi$ is a continuous singular trace on $\mathcal{I}_g$ and $\theta:=\varphi\circ D_g$, then
\begin{equation}\label{Lidf}
  \varphi(T)= \theta\left(\left\{\frac{1}{2^n g(2^n)}\sum_{j=2^{n}-1}^{2^{n+1}-2}\lambda(j,T)\right\}_{n\geq0}\right),\quad~T\in\mathcal{I}_g.
\end{equation}
\end{theorem}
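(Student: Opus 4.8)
The plan is to bootstrap from the self-adjoint case proved in Theorem~\ref{self}, following the scheme of \cite[Theorem~4.1]{SSUZ2015}. Given $T\in\mathcal{I}_g$, write $T=\Re T+i\,\Im T$, where $\Re T,\Im T$ are self-adjoint and, since $\mathcal{I}_g$ is an ideal, again lie in $\mathcal{I}_g$. As $\varphi$ is linear, $\varphi(T)=\varphi(\Re T)+i\,\varphi(\Im T)$, and applying Theorem~\ref{self} to the two self-adjoint summands (and then linearity of $\theta$, extended coordinatewise to complex sequences) gives
$$\varphi(T)=\theta\Big(\Big\{\tfrac{1}{2^n g(2^n)}\sum_{j=2^n-1}^{2^{n+1}-2}\big(\lambda(j,\Re T)+i\,\lambda(j,\Im T)\big)\Big\}_{n\ge0}\Big).$$
Comparing with \eqref{Lidf}, the theorem is therefore equivalent to the assertion that $\theta$ annihilates the sequence
$$\Big\{\tfrac{1}{2^n g(2^n)}\sum_{j=2^n-1}^{2^{n+1}-2}\big(\lambda(j,T)-\lambda(j,\Re T)-i\,\lambda(j,\Im T)\big)\Big\}_{n\ge0}.$$

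The key ingredient is a partial-sum estimate for eigenvalues: there is an absolute constant $C>0$ such that
$$\Big|\sum_{j=0}^{n}\lambda(j,T)-\sum_{j=0}^{n}\lambda(j,\Re T)-i\sum_{j=0}^{n}\lambda(j,\Im T)\Big|\le C(n+1)\mu(n,T),\qquad n\ge0.$$
I would establish this via the Ringrose--Schur upper-triangular representation of a compact operator, $T=\mathrm{diag}(\lambda(T))+Q$ relative to a maximal nest of subspaces (so that $Q$ is quasinilpotent and ``strictly upper triangular''); then $\Re T=\mathrm{diag}(\Re\lambda(T))+\Re Q$, and Ky~Fan--Weyl inequalities compare the eigenvalue sums of the self-adjoint operator $\Re T$ with those of its diagonal part, the remaining discrepancy — including the mismatch between the enumeration of $\lambda(T)$ by modulus and that of $\lambda(\Re T)$ by value — being absorbed into the error term $C(n+1)\mu(n,T)$. (Such an estimate is in any case available from the eigenvalue theory of \cite[Chapter~5]{LSZ2013}.) Since $T\in\mathcal{I}_g$ one has $\mu(n,T)=O(g(n))$, and because $g$ is decreasing and $g\in\mathcal{G}\subset\mathcal{R}_{-1}$ one gets $g(2^n-1)\le g(2^{n-1})=O(g(2^n))$; hence the partial sums above are $O(2^ng(2^n))$ at the dyadic points $n=2^m$.

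With this estimate the remainder of the argument runs exactly as in the proof of Theorem~\ref{self}: the sequence $b_j:=\lambda(j,T)-\lambda(j,\Re T)-i\,\lambda(j,\Im T)$ is bounded with $\big|\sum_{j=0}^{n}b_j\big|=O(ng(n))$, so, splitting into real and imaginary parts (Lemma~\ref{shift lemma} is phrased for real-valued $f$) and realising the partial sums as an integral of a bounded step function adapted to the dyadic blocks, Lemma~\ref{shift lemma} places the displayed block sequence in $\mathrm{Range}(I-S_r)+c_0(\mathbb{N})$. Finally $\theta$ vanishes on this subspace: shift-invariance gives $\theta\circ(I-S_r)=0$, while continuity together with shift-invariance forces $\theta|_{c_0(\mathbb{N})}=0$ (if $\delta=(1,0,0,\dots)$ then $\theta(S_r^k\delta)=\theta(\delta)$ for all $k$, yet $\tfrac1N\sum_{k=0}^{N-1}S_r^k\delta\to0$ in $\ell_\infty$, so $\theta(\delta)=0$, and $\theta$ is then zero on finitely supported sequences and, by continuity, on their closure). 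This proves \eqref{Lidf}. I expect the main obstacle to be the eigenvalue-sum estimate: because the map $T\mapsto\lambda(T)$ is not additive, passing from $\sum\lambda(j,T)$ to $\sum\lambda(j,\Re T)+i\sum\lambda(j,\Im T)$ with an $O((n+1)\mu(n,T))$ error is not a formal manipulation and genuinely needs the triangular decomposition (or an appeal to its known consequences); everything else closely parallels the self-adjoint case.
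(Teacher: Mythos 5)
Your reduction via Theorem~\ref{self} applied to $\Re T,\Im T$ is fine, and so is the final step showing $\theta$ kills $\mathrm{Range}(I-S_r)+c_0(\mathbb N)$. The gap is in the middle: the claimed estimate
\begin{equation*}
\Bigl|\sum_{j=0}^{n}\lambda(j,T)-\sum_{j=0}^{n}\lambda(j,\Re T)-i\sum_{j=0}^{n}\lambda(j,\Im T)\Bigr|\le C(n+1)\mu(n,T)
\end{equation*}
for \emph{arbitrary} compact $T$ is neither proved nor available from \cite[Chapter~5]{LSZ2013}. The lemma the paper actually invokes, \cite[Lemma~5.2.10]{LSZ2013}, is stated for \emph{normal} operators $N$, and its proof rests on the fact that for normal $N$ the multiset of eigenvalues of $\Re N$ (resp. $\Im N$) is exactly $\{\Re\lambda(j,N)\}$ (resp. $\{\Im\lambda(j,N)\}$), so the only issue is a rearrangement by modulus. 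For a non-normal $T$ the eigenvalues of $\Re T$ are \emph{not} the real parts of $\lambda(j,T)$; the Schur/Ringrose decomposition gives $\Re T=\mathrm{diag}(\Re\lambda(T))+\Re Q$ with $\Re Q$ self-adjoint and non-trivial, and Schur--Horn/Ky Fan only deliver one-sided majorization inequalities between eigenvalues and diagonal entries, not a two-sided discrepancy bound of size $O((n+1)\mu(n,T))$. A naive perturbation argument gives at best an error of order $\sum_{j\le n}\mu(j,\Re Q)$, which for $\mu(j,T)\sim g(j)$ is of order $G(n)$, much larger than the required $ng(n)$. So the key step of your proof is genuinely missing, and the hand-wave ``absorbed into the error term'' does not close it.

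The paper avoids this problem by a different mechanism, and you should note what it buys. It writes $T=N+Q$ (Ringrose, $N$ normal with $\lambda(N)=\lambda(T)$, $Q$ quasi-nilpotent), shows $N\prec\prec_{\log}T$ (Weyl's inequality), uses that $\mathcal I_g$ is geometrically stable and hence logarithmically closed (\cite[Proposition~3.2]{Kalton1998}) to conclude $N,Q\in\mathcal I_g$, and then invokes the \emph{trace}-theoretic fact that $\varphi(Q)=0$ because $Q$ has spectrum $\{0\}$ (\cite[Theorem~8]{spectral}). This gives $\varphi(T)=\varphi(N)=\varphi(\Re N)+i\varphi(\Im N)$, and one then applies Theorem~\ref{self} and the normal-operator estimate \cite[Lemma~5.2.10]{LSZ2013} to $N$, where $\mu(n,N)=|\lambda(n,T)|=O(g(n))$. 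In other words, the obstruction you identify (non-additivity of $T\mapsto\lambda(T)$) is dealt with not by a sharper linear-algebraic estimate but by a structural input about traces: quasi-nilpotent operators are killed. Your proposal omits this input, omits the verification that $N\in\mathcal I_g$, and replaces them by an eigenvalue estimate for general $T$ that, as written, you cannot justify.
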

\begin{proof}\rm{}
Recall that for every $T\in\mathcal{C}_0(H),$ there is a well-known Ringrose decomposition: $T=N+Q$, where $Q$ is quasi-nilpotent and $N$ is normal with $\lambda(N)=\lambda(T)$ (see \cite{Ringrose1962}). Hence, $\mu(N)=|\lambda(T)|.$
This combined with the Weyl inequality \eqref{weyl} yields that
\begin{equation*}
  \prod_{j=0}^{n}\mu(j,N)=\prod_{j=0}^{n}|\lambda(j,T)|\leq\prod_{j=0}^{n}\mu(j,T),\hspace{0.3em},\quad n\geq0.
\end{equation*}
that is, $N\prec\prec_{\log} T.$ Clearly, $\mathcal{I}_g$ is a quasi-Banach ideal, so it is geometrically stable by \cite[Proposition 3.2]{Kalton1998},  and is therefore, logarithmically closed. It follows that $N\in\mathcal{I}_g$ and $Q=T-N\in\mathcal{I}_g.$ Since the spectrum of $Q$ consists of only zero, then by \cite[Theorem 8]{spectral}, we conclude that $\varphi(Q)=0.$
From Theorem \ref{self}, we deduce that
\begin{equation}\label{eq03}
  \varphi(T)=\varphi(N)=\varphi(\Re N)+i\varphi(\Im N)=\theta\left(\left\{\frac{1}{2^n g(2^n)}\sum_{j=2^{n}-1}^{2^{n+1}-2}\lambda(j,\Re N)+i\lambda(j,\Im N)\right\}_{n\geq0}\right).
\end{equation}
By \cite[Lemma 5.2.10 ]{LSZ2013}, we have
\begin{equation*}
  \left|\sum_{j=0}^{n}\lambda(j,N)-\lambda(j,\Re N)-i\lambda(j,\Im N)\right|\leq 5(n+1)\mu(n,N)=O(n g(n)).
\end{equation*}
By Lemma \ref{shift lemma}, we get
\begin{equation}\label{eq04}
\theta\left(\left\{\frac{1}{2^n g(2^n)}\sum_{j=2^{n}-1}^{2^{n+1}-2}\lambda(j,N)\right\}_{n\geq0}\right)=\theta\left(\left\{\frac{1}{2^n g(2^n)}\sum_{j=2^{n}-1}^{2^{n+1}-2}\lambda(j,\Re N)+i\lambda(j,\Im N)\right\}_{n\geq0}\right).
\end{equation}
Combining \eqref{eq03} and \eqref{eq04}, we conclude the proof.
\end{proof}

\section{Zeta-measurability}
In this section we additionally assume that
a function $G\in\Omega$ is such that $G\circ \exp\in \mathcal{R}_\alpha.$
This is equivalent to
\begin{equation}\label{Gexp}
  \lim_{t\rightarrow\infty}\frac{G(t^{\lambda})}{G(t)}=\lambda^\alpha,\quad \forall~\lambda>0.
\end{equation}
Hence, $G$ satisfies condition (8) from \cite{GS2014}.
The following should be compared to \cite[Theorem 3.3]{GS2014}. The proof is similar to \cite[Theorem 3.3]{GS2014} and therefore omitted.
\begin{lemma}\label{limit lemma}
Let $G$ satisfy \eqref{Gexp} with $\alpha\geq0,$ and let $0\leq T\in\mathcal{M}_G.$ If for some positive constant $c,$ there exists the limit
$$
\lim_{n\rightarrow\infty}\frac{1}{G(n+1)}\sum_{k=0}^n \mu(k,T)=c,
$$
then
$$
\lim_{t\rightarrow\infty}\frac{1}{G(e^t)}\mathrm{Tr}(T^{1+1/t})=c\cdot\Gamma(\alpha+1).
$$
Here, $\Gamma$ stands for the Gamma function.
\end{lemma}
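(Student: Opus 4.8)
The assertion is a Hardy--Littlewood--Karamata Tauberian theorem: a Ces\`aro-type asymptotic for the singular values of $T$ is converted into an Abel/Laplace-type asymptotic for the regularised trace $\mathrm{Tr}(T^{1+1/t})$, with $\Gamma(\alpha+1)$ appearing as the ``gamma factor'' of Karamata's theorem. The plan is to follow the scheme of \cite[Theorem~3.3]{GS2014}. After the routine reductions --- both sides are homogeneous of degree one in $T$ (using $r^{1+1/t}\to r$), so one may assume $\mu(0,T)<1$, and since $c>0$ while $G$ diverges $T$ is not of finite rank, so $\mu(k,T)>0$ for all $k$ --- put $E(s):=\int_0^s\mu(u,T)\,du$; the hypothesis together with the monotonicity of $E$ and $G\in\mathcal R_0$ yields $E(s)\sim cG(s)$ as $s\to\infty$.

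The crux is to rewrite $\mathrm{Tr}(T^{1+1/t})=\int_0^\infty\mu(s,T)^{1+1/t}\,ds$ as a Laplace--Stieltjes transform. Substituting $s=e^x$ and putting $M(x):=E(e^x)$, so that $dM(x)=\mu(e^x,T)e^x\,dx$ and $M'(x)=\mu(e^x,T)e^x$ a.e., gives
\begin{equation*}
\mathrm{Tr}(T^{1+1/t})=\int_{-\infty}^{\infty}\big(M'(x)\big)^{1/t}\,e^{-x/t}\,dM(x),
\end{equation*}
and $M(x)\sim cG(e^x)$ is, by the standing assumption $G\circ\exp\in\mathcal R_\alpha$, regularly varying of index $\alpha\ge 0$ in $x$; write $G(e^x)=x^{\alpha}\ell(x)$ with $\ell\in\mathcal R_0$. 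The next step is to remove the factor $\big(M'(x)\big)^{1/t}$, i.e.\ to show $\mathrm{Tr}(T^{1+1/t})\sim\int_{-\infty}^{\infty}e^{-x/t}\,dM(x)$. The bound $M'(x)=\mu(e^x,T)e^x\le\|T\|_{\mathcal M_G}G(e^x)=Cx^{\alpha}\ell(x)$ gives, on the ``effective range'' $\{x\le At\}\cap\{M'(x)\ge e^{-\sqrt x}\}$, the uniform estimate $e^{-\sqrt x/t}\le\big(M'(x)\big)^{1/t}\le\big(Cx^\alpha\ell(x)\big)^{1/t}\to1$; on $\{M'(x)<e^{-\sqrt x}\}$ the measure $e^{-x/t}\,dM$ carries mass $O(1)$ (since $\int_0^\infty e^{-\sqrt x}\,dx<\infty$); and on the tail $\{x>At\}$ one uses $\big(M'(x)\big)^{1/t}e^{-x/t}\le e^{-x/(2t)}$ (valid for $x$ past a fixed, $t$-independent point) to dominate the contribution by $\int_{At}^\infty e^{-x/(2t)}\,dM(x)$, which is an $o_A(1)$-fraction of the total uniformly in $t$ --- all three pieces being negligible against $\int e^{-x/t}\,dM(x)\to\infty$.

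It then remains to apply Karamata's Tauberian theorem \cite[Theorem~1.7.1]{regular} (the Tauberian side condition $dM\ge0$ holds automatically): from $M(x)\sim c\,x^{\alpha}\ell(x)$ one obtains
\begin{equation*}
\int_0^{\infty}e^{-x/t}\,dM(x)\sim c\,\Gamma(\alpha+1)\,t^{\alpha}\ell(t)=c\,\Gamma(\alpha+1)\,G(e^t),\qquad t\to\infty,
\end{equation*}
which together with the previous paragraph is exactly the assertion (the contribution of $\{x<0\}$ to $\int_{-\infty}^{\infty}e^{-x/t}\,dM(x)$ being $O(1)$).

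The genuinely delicate point --- and the main obstacle --- is the removal of $\big(M'(x)\big)^{1/t}$: the hypothesis controls only the averages $\frac1{G(n+1)}\sum_{k\le n}\mu(k,T)$, not the individual singular values, so the kernel $\mu(e^x,T)^{1/t}$ need not be uniformly comparable to $e^{-x/t}$, and one must exploit that wherever they disagree substantially the measure $dM$ is thin. Making this precise is precisely the content of \cite[Theorem~3.3]{GS2014}, which is why the details are omitted.
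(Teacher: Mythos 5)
Your reconstruction is in line with the argument the paper points to: express $\mathrm{Tr}(T^{1+1/t})$ (after the substitution $s=e^x$) as a weighted Laplace--Stieltjes integral of $M(x)=E(e^x)$, strip off the $(M')^{1/t}$ weight by a region decomposition that uses the a priori Lorentz bound $M'(x)=O(G(e^x))$, and apply Karamata's theorem to $M(x)\sim cG(e^x)\in\mathcal{R}_\alpha$. The paper deliberately omits these details by deferring to \cite[Theorem~3.3]{GS2014}, which is precisely the scheme you follow, so this is essentially the intended argument.
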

This result enables us to compute the zeta-function residues of several special operators.
\begin{proposition}\label{special value proposition}
 Let $g\in\mathcal{G}$ an let $G\in\Omega$ be its primitive satisfying condition \eqref{Gexp} for some $\alpha\geq0.$ Then there exist the limits
\begin{equation}\label{special value}
\lim_{t\to\infty}\frac{1}{G(e^t)}\sum_{n=0}^{\infty}g(n)^{1+1/t}=\Gamma(\alpha+1),\quad\lim_{t\to\infty}\frac{1}{G(e^t)}\mathrm{Tr}((D_g1)^{1+1/t})=\frac{\Gamma(\alpha+1)}{\log2}.
\end{equation}
\end{proposition}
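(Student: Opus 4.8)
The plan is to reduce both limits to Lemma \ref{limit lemma}. For the first limit, observe that the sequence $\{g(n)\}_{n\geq 0}$ is (up to passing to the eventually decreasing modification, cf. Remark \ref{rmk1}) the singular value sequence of the diagonal operator $A:=\mathrm{diag}(\{g(n)\}_{n\geq 0})$, so $\mathrm{Tr}(A^{1+1/t})=\sum_{n=0}^\infty g(n)^{1+1/t}$. Since $g\in\mathcal{G}$ and $G$ is its primitive, for the partial sums we have
\begin{equation*}
\sum_{k=0}^{n}\mu(k,A)=\sum_{k=0}^{n}g(k)=G(n+1)+O(g(0)),
\end{equation*}
because $g$ is monotone, so $\int_0^{n+1}g(s)\,ds\leq\sum_{k=0}^n g(k)\leq g(0)+\int_0^{n+1}g(s)\,ds$ (the standard integral comparison). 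Hence $\frac{1}{G(n+1)}\sum_{k=0}^n\mu(k,A)\to 1$ (using $G(n+1)\to\infty$ since $G$ diverges), and in particular $A\in\mathcal{M}_G$. Applying Lemma \ref{limit lemma} with $c=1$ gives
\begin{equation*}
\lim_{t\to\infty}\frac{1}{G(e^t)}\sum_{n=0}^\infty g(n)^{1+1/t}=\lim_{t\to\infty}\frac{1}{G(e^t)}\mathrm{Tr}(A^{1+1/t})=\Gamma(\alpha+1).
\end{equation*}

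For the second limit, recall from \eqref{pop} that $D_g1=\mathrm{diag}(g(2^0),g(2^1),g(2^1),g(2^2),g(2^2),g(2^2),g(2^2),\dots)$, i.e.\ $g(2^m)$ repeated $2^m$ times. Thus $\mathrm{Tr}((D_g1)^{1+1/t})=\sum_{m=0}^\infty 2^m g(2^m)^{1+1/t}$, and the relevant partial sums are $\sum_{k=0}^{2^{n+1}-2}\mu(k,D_g1)=\sum_{m=0}^{n}2^m g(2^m)$. The key computation is to show
\begin{equation*}
\sum_{m=0}^{n}2^m g(2^m)\sim\frac{1}{\log 2}\,G(2^{n+1}),\qquad n\to\infty.
\end{equation*}
This follows from the Cauchy condensation-type estimate: since $g$ is decreasing, $2^m g(2^{m+1})\leq\int_{2^m}^{2^{m+1}}g(s)\,ds\leq 2^m g(2^m)$, and because $g\in\mathcal{R}_{-1}$ (Proposition \ref{testing}) we have $g(2^{m+1})\sim\frac12 g(2^m)$, so $2^m g(2^m)\sim 2\int_{2^m}^{2^{m+1}}g(s)\,ds$; summing over $m\leq n$ telescopes the integrals to $2\big(G(2^{n+1})-G(1)\big)$. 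Wait—this gives the factor $2$, not $1/\log 2$; the correct bookkeeping is instead to note $\log 2\cdot\sum_{m=0}^n 2^m g(2^m)\sim\sum_{m=0}^n\int_{2^m}^{2^{m+1}}\frac{g(2^m)\cdot 2^m\log 2}{s}\cdot\frac{s}{\dots}$, i.e.\ compare $2^m g(2^m)\log 2=\int_{2^m}^{2^{m+1}}\frac{2^m g(2^m)\log 2}{s}\,ds$ with $\int_{2^m}^{2^{m+1}}g(s)\,ds$ using that on $[2^m,2^{m+1}]$ one has $g(s)\sim g(2^m)$ uniformly (by the uniform convergence theorem for regularly varying functions, \cite[Section 1.4.2]{regular}) and $\frac{2^m\log 2}{s}\in[\frac{\log 2}{2},\log 2]$ with average $1$ against $ds/s$... the cleanest route is: on $[2^m,2^{m+1}]$, $g(s)\approx g(2^m)$, hence $\int_{2^m}^{2^{m+1}}g(s)\,ds\approx g(2^m)\cdot 2^m$, giving $\sum_{m=0}^n 2^m g(2^m)\approx\sum_{m=0}^n\int_{2^m}^{2^{m+1}}g(s)\,ds=G(2^{n+1})-G(1)\sim G(2^{n+1})$, and then the factor $\log 2$ enters when re-indexing $G(2^{n+1})$ against $G(e^t)$.

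Carrying this through: set $N=2^{n+1}-1$, so that $\sum_{k=0}^{N-1}\mu(k,D_g1)=\sum_{m=0}^n 2^m g(2^m)\sim G(2^{n+1})$. We must compare with $G(N+1)=G(2^{n+1})$, which matches, so $\frac{1}{G(N+1)}\sum_{k=0}^{N-1}\mu(k,D_g1)\to 1$ along $N=2^{n+1}-1$; since the partial-sum ratio is monotone in the right way (numerator increasing, $G$ increasing and slowly varying so $G(N+1)/G(2^{n+1})\to1$ for $2^n\le N\le 2^{n+1}$), the full limit $\lim_{N\to\infty}\frac{1}{G(N+1)}\sum_{k=0}^{N}\mu(k,D_g1)=1$ holds, so $D_g1\in\mathcal{M}_G$. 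Lemma \ref{limit lemma} then yields $\lim_{t\to\infty}\frac{1}{G(e^t)}\mathrm{Tr}((D_g1)^{1+1/t})=\Gamma(\alpha+1)$. This is off by the factor $1/\log 2$, which means the correct asymptotic is actually $\sum_{m=0}^n 2^m g(2^m)\sim\frac{1}{\log 2}G(2^{n+1})$ — the discrepancy is resolved by the substitution $s=2^u$, $ds=2^u\log 2\,du$ in $G(2^{n+1})=\int_0^{2^{n+1}}g(s)\,ds=\log 2\int_{-\infty}^{n+1}2^u g(2^u)\,du$, and then comparing the integral $\int 2^u g(2^u)\,du$ with the sum $\sum 2^m g(2^m)$ via monotonicity of $u\mapsto 2^u g(2^u)$ (which is nondecreasing since $g\in\mathcal{R}_{-1}$ forces $2^ug(2^u)$ to vary slowly). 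I expect the main obstacle to be exactly this bookkeeping of the $\log 2$: one must be careful that $G(2^{n+1})=\int_0^{2^{n+1}}g$ translates, under $s=2^u$, into $\log 2$ times a sum-like integral whose value is asymptotically $\sum_{m\le n}2^m g(2^m)$, giving the claimed constant $\Gamma(\alpha+1)/\log 2$ after one more application of Lemma \ref{limit lemma} (or, equivalently, of \eqref{Gexp} to absorb $G(2^{n+1})/G(e^{(n+1)\log 2})=1$). Everything else — the integral comparisons, the passage from the subsequence $N=2^{n+1}-1$ to all $N$, and the invocation of Lemma \ref{limit lemma} — is routine.
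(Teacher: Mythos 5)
Your overall strategy matches the paper's: both limits are reduced to Lemma \ref{limit lemma} by showing that the relevant diagonal operators have Ces\`aro-type sums asymptotic to $c\cdot G(n+1)$. For the first limit your computation $\sum_{k=0}^n g(k) = G(n+1) + O(g(0))$ by integral comparison is exactly what makes the paper's claim ``trivially'' work, so that part is fine.

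For the second limit there is a genuine gap. After correctly abandoning the Cauchy-condensation attempt (indeed, $g(2^{m+1})\sim\frac12 g(2^m)$ together with $2^m g(2^{m+1})\le\int_{2^m}^{2^{m+1}}g\le 2^m g(2^m)$ only traps the ratio between $1$ and $2$ and cannot give a precise constant), you arrive at the right route: substitute $s=2^u$ to write $G(2^{n+1})=\log 2\int_{-\infty}^{n+1}2^u g(2^u)\,du$ and compare the integral with $\sum_{m\le n}2^m g(2^m)$. But you then justify that comparison by asserting that $u\mapsto 2^u g(2^u)$ ``is nondecreasing since $g\in\mathcal{R}_{-1}$ forces $2^u g(2^u)$ to vary slowly.'' Slow variation does not imply monotonicity (e.g.\ $g(t)=1/(t\log t)$ makes $2^u g(2^u)=1/(u\log 2)$ \emph{decreasing}, and non-monotone slowly varying examples exist too). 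Indeed the paper is careful to impose ``$t\mapsto tg(t)$ eventually monotone'' as an \emph{extra} hypothesis in Theorem \ref{blmt}, precisely because it does not follow from $g\in\mathcal{G}$. The correct way to compare the integral and the sum without monotonicity is the uniform convergence theorem for regularly varying functions: $g(2^{m+v})/g(2^m)\to 2^{-v}$ uniformly for $v\in[0,1]$, hence $2^u g(2^u)\sim 2^m g(2^m)$ uniformly on $[m,m+1]$, so $\int_m^{m+1}2^u g(2^u)\,du\sim 2^m g(2^m)$; since $\sum_m 2^m g(2^m)$ diverges (because $g\notin L_1$), termwise asymptotic equivalence passes to the partial sums, giving $\sum_{m=0}^n 2^m g(2^m)\sim \frac1{\log 2}G(2^{n+1})$. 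The paper simply delegates this estimate to \cite[Lemma 4.4]{principal}; your sketch reaches the same conclusion but the monotonicity claim must be replaced by the uniform convergence argument.
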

\begin{proof}
By Lemma \ref{limit lemma}, the first equality in \eqref{special value} is valid trivially.
For every $N\geq1,$ there is $n\geq1$ such that $2^{n}-1\leq N\leq 2^{n+1}-2.$ We have
$$\sum_{k=0}^N\mu(k,D_g1)=\sum_{k=0}^{n-1}2^kg(2^k)+\sum_{k=2^n}^Ng(2^n)=\sum_{k=0}^{n-1}2^kg(2^k)+O(2^ng(2^n)),\quad n\geq1.$$
It follows from \cite[Lemma 4.4]{principal} that
$$\sum_{k=0}^N\mu(k,D_g1)=\frac{G(2^n)}{\log2}+o(G(2^n)),\quad n\to\infty.$$
Clearly, $G(2^n)\sim G(N),$ as $N\to\infty.$ Therefore,
$$\lim_{N\to\infty}\frac{1}{G(N+1)}\sum_{k=0}^N\mu(k,D_g1)=\frac{1}{\log2}.$$
The second limit equality in \eqref{special value} follows from Lemma \ref{limit lemma} as well.
\end{proof}

Lemma \ref{limit lemma} and proposition \ref{special value proposition} imply the following result, which generalises \cite[Theorem 3.3]{CRSS2007}.
\begin{corollary}\label{zeta-lorentz equality} Let $G$ be as in Proposition \ref{special value proposition}, we have $\mathcal{Z}_G =\mathcal{M}_G.$
\end{corollary}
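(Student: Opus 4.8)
The goal is to prove the equality $\mathcal{Z}_G = \mathcal{M}_G$ for $G \in \Omega$ whose primitive $g$ lies in $\mathcal{G}$ and which satisfies \eqref{Gexp} for some $\alpha \ge 0$.

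The plan is to establish the two inclusions separately. The inclusion $\mathcal{Z}_G \subset \mathcal{M}_G$ is already recorded in the text (proved analogously to \cite[Theorem 4.5(i)]{CRSS2007}), so the real content is $\mathcal{M}_G \subset \mathcal{Z}_G$. I would argue as follows. Let $0 \le T \in \mathcal{M}_G$; by definition of the norm on $\mathcal{M}_G$ there is a constant $c > 0$ with $\sum_{k=0}^n \mu(k,T) \le c\,G(n+1)$ for all $n$. Equivalently, by \eqref{second definition for lorentz ideal}, $\mu(T) \prec\prec c\cdot g$ (after adjusting $c$). The idea is then to compare $\mathrm{Tr}(T^{1+1/t})$ with $\sum_{n=0}^\infty (c\,g(n))^{1+1/t}$ using the fact that submajorization is preserved under the convex increasing map $x \mapsto x^{1+1/t}$ — this is precisely \eqref{fully symmetric} applied with $p = 1+1/t$, which gives $\mathrm{Tr}(T^{1+1/t}) \le c^{1+1/t}\sum_{n=0}^\infty g(n)^{1+1/t}$ for every $t > 0$.

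With this pointwise bound in hand, I would divide through by $G(e^t)$ and take $\limsup_{t\to\infty}$. By the first limit in \eqref{special value} of Proposition \ref{special value proposition}, $\frac{1}{G(e^t)}\sum_{n=0}^\infty g(n)^{1+1/t} \to \Gamma(\alpha+1)$, while $c^{1+1/t} \to c$. Hence
$$
\|T\|_{\mathcal{Z}_G} = \limsup_{t\to\infty}\frac{\mathrm{Tr}(T^{1+1/t})}{G(e^t)} \le c\,\Gamma(\alpha+1) < \infty,
$$
so $T \in \mathcal{Z}_G$. Since $\mathcal{M}_G$ is generated (as a cone, up to scalars) by its positive part, and every $T \in \mathcal{M}_G$ decomposes into four positive pieces each of which lies in $\mathcal{Z}_G$, the full inclusion $\mathcal{M}_G \subset \mathcal{Z}_G$ follows; combined with $\mathcal{Z}_G \subset \mathcal{M}_G$ this gives $\mathcal{Z}_G = \mathcal{M}_G$.

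I expect the only mildly delicate point to be the justification that $\mathrm{Tr}(T^{1+1/t})$ is genuinely finite for each $t > 0$ and that the submajorization inequality \eqref{fully symmetric} can be invoked with the (possibly non-compact-support) comparison sequence $c\cdot g$ — but this is handled by Proposition \ref{close to L_1}, which guarantees $\mathcal{I}_g \subset \mathcal{L}_p$ for all $p > 1$, together with the fact that $\mathrm{diag}(c\cdot g) \in \mathcal{I}_g$, so $\mathrm{Tr}((c\cdot g)^{1+1/t}) < \infty$ and the standard extension of \eqref{fully symmetric} to infinite sums of nonnegative terms applies by monotone convergence on finite truncations. No step requires anything beyond the results already stated, so the proof is short; for this reason one could equally well just state the corollary and remark that it is immediate from Lemma \ref{limit lemma}, Proposition \ref{special value proposition} and \eqref{fully symmetric}.
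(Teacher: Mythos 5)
Your proof is correct and is essentially the same argument the paper gives, just with the mechanism unpacked: the paper says ``the first equality in \eqref{special value} implies $\mathcal{I}_g\subset\mathcal{Z}_G$, and since $\mathcal{Z}_G$ is a fully symmetric subspace the assertion follows from \eqref{second definition for lorentz ideal},'' while you make explicit that the fully-symmetry in question is exactly the submajorization inequality \eqref{fully symmetric} with $p=1+1/t$ applied against the comparison operator $\mathrm{diag}(c\cdot g)$. Both proofs thus rest on the same two ingredients (the first limit in Proposition \ref{special value proposition} and \eqref{fully symmetric}), so there is no genuine difference in approach.
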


\begin{proof}
We note that the first equality in \eqref{special value} implies that $\mathcal{I}_g\subset\mathcal{Z}_G.$ Since $\mathcal{Z}_G$ is a fully symmetric subspace of $\mathcal{M}_G,$ the assertion follows immediately from \eqref{second definition for lorentz ideal}.
\end{proof}
By Proposition \ref{scalar} and Corollary \ref{special value proposition}, every zeta-function residue with domain $\mathcal{Z}_G=\mathcal{M}_G$ is proportional to a Dixmier trace. By Theorem \ref{biject}, its restriction on $\mathcal{I}_g,$ up to some positive constant,  corresponds to a Banach limit. More precisely, we have the following one-to-one correspondence.
\begin{lemma}\label{blmt0}
Let $g\in\mathcal{G},$  and let $G\in\Omega$ be its primitive function satisfying \eqref{Gexp} with $\alpha\geq0.$ For every $\gamma\in EL(0,\infty),$ the functional $B_{\gamma,g}:=\frac{\log2}{\Gamma(\alpha+1)}\zeta_\gamma\circ D_g$ is a Banach limit and it can be writteen in the following form:
$$
B_{\gamma,g}(x)=\frac{\log2}{\Gamma(\alpha+1)}\gamma\Big(t\mapsto\frac{1}{G(e^t)}\sum_{n=0}^\infty x_n2^{-n/t}\big(2^ng(2^n)\big)^{1+1/t}\Big),\quad x\in\ell_\infty.
$$
Moreover,
$$ \zeta_\gamma(T)=\frac{\Gamma(\alpha+1)}{\log2}B_{\gamma,g}\Big( \Big\{\frac{1}{2^ng(2^n)}\sum_{j=2^n-1}^{2^{n+1}-2}\lambda(j,T)\Big\}_{n\geq0}\Big),\quad T\in\mathcal{I}_g.$$
\end{lemma}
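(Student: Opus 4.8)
The plan is to verify the three assertions in turn, drawing on the structural results already established. First I would check that $B_{\gamma,g}$ is a Banach limit. By Proposition \ref{scalar} and Corollary \ref{zeta-lorentz equality} the functional $\zeta_\gamma$ is a (scalar multiple of a) Dixmier trace on $\mathcal{Z}_G=\mathcal{M}_G$, hence a continuous singular trace on $\mathcal{I}_g$; composing with the Pietsch operator $D_g$ and invoking Theorem \ref{biject} shows that $\zeta_\gamma\circ D_g$ is a continuous shift-invariant functional on $\ell_\infty$. It is positive because $\zeta_\gamma$ is positive and $D_g$ maps nonnegative sequences to positive operators. To see that the normalising constant $\frac{\log 2}{\Gamma(\alpha+1)}$ makes it an \emph{extended limit} (i.e. $B_{\gamma,g}(1)=1$), I would apply $\zeta_\gamma$ to $D_g 1$ and use the second equality in Proposition \ref{special value proposition}, which gives $\zeta_\gamma(D_g 1)=\gamma\big(t\mapsto G(e^t)^{-1}\mathrm{Tr}((D_g1)^{1+1/t})\big)=\frac{\Gamma(\alpha+1)}{\log 2}$. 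A shift-invariant extended limit is, by Definition \ref{Blimit}, a Banach limit.

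Next I would derive the explicit integral/series formula for $B_{\gamma,g}$. This is a direct computation: for $x\in\ell_\infty$, the operator $D_g x$ is diagonal with the value $x_n g(2^n)$ repeated $2^n$ times (for $n\ge 1$, and once for $n=0$), so
\[
\mathrm{Tr}\big((D_g|x|)^{1+1/t}\big)=\sum_{n=0}^\infty 2^n\,|x_n|^{1+1/t}\,\big(g(2^n)\big)^{1+1/t}
=\sum_{n=0}^\infty |x_n|^{1+1/t}\,2^{-n/t}\big(2^n g(2^n)\big)^{1+1/t}.
\]
Substituting this into the definition of $\zeta_\gamma$ and extending from the positive cone by linearity (as spelled out after Definition \ref{zeta}) yields the stated form of $B_{\gamma,g}(x)$; one should note the series converges for each $t>0$ since $D_g x\in\mathcal{I}_g\subset\mathcal{L}_{1+1/t}$ by Proposition \ref{close to L_1}.

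Finally, the displayed formula for $\zeta_\gamma(T)$ on all of $\mathcal{I}_g$ follows by combining the relation $\zeta_\gamma=\frac{\Gamma(\alpha+1)}{\log 2}B_{\gamma,g}\circ(\varphi\circ D_g)^{-1}$-type inversion with the Lidskii formula. Concretely: $\zeta_\gamma$ is a continuous singular trace on $\mathcal{I}_g$, so by Theorem \ref{Lid} we have $\zeta_\gamma(T)=\theta\big(\{\tfrac{1}{2^n g(2^n)}\sum_{j=2^n-1}^{2^{n+1}-2}\lambda(j,T)\}_{n\ge0}\big)$ where $\theta=\zeta_\gamma\circ D_g$; by definition $\theta=\frac{\Gamma(\alpha+1)}{\log 2}B_{\gamma,g}$, which gives exactly the claimed identity. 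I expect the main obstacle to be bookkeeping rather than conceptual: one must be careful that $D_g x$ genuinely lies in $\mathcal{Z}_G$ so that $\zeta_\gamma$ is defined on it (this is where the hypothesis $\alpha\ge 0$ and condition \eqref{Gexp} enter, via Corollary \ref{zeta-lorentz equality}), and that the passage between $\tilde\gamma$ on $L_0^+$ and $\gamma$ on $L_\infty$ — needed to justify applying $\gamma$ to the a priori merely positive function $t\mapsto G(e^t)^{-1}\sum_n |x_n|^{1+1/t}2^{-n/t}(2^ng(2^n))^{1+1/t}$ — is legitimate, which is handled by Lemma \ref{triple}\eqref{elc} together with the uniform bound on this function coming from $D_g x\in\mathcal{Z}_G$.
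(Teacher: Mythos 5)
Your argument for the ``Banach limit'' assertion matches the paper's (via Theorem~\ref{biject} for shift-invariance and positivity, and Proposition~\ref{special value proposition} for normalisation), and your use of Theorem~\ref{Lid} together with $\theta=\zeta_\gamma\circ D_g$ for the final identity is correct (and even slightly more careful than the paper, which cites only Theorem~\ref{biject} despite the statement being in terms of $\lambda(j,T)$ rather than $\mu(j,T)$).

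However, your derivation of the explicit formula for $B_{\gamma,g}$ has a genuine gap. Your direct computation gives, for $x\ge0$,
\[
\zeta_\gamma(D_gx)=\gamma\Big(t\mapsto\frac{1}{G(e^t)}\sum_{n=0}^\infty x_n^{\,1+1/t}\,2^{-n/t}\big(2^ng(2^n)\big)^{1+1/t}\Big),
\]
whereas the lemma claims the \emph{linear-in-$x_n$} expression with $x_n$ to the first power. ``Extending from the positive cone by linearity'' cannot bridge this: the right-hand side above is not linear in $x$ (the exponent $1+1/t$ sits on $x_n$), so the linear extension of $\zeta_\gamma\circ D_g$ is not obtained simply by dropping the absolute values in your formula. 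The missing step is the uniform estimate that for $0\le a\le1$ one has $0\le a-a^{1+1/t}\le 1/t$ (by maximising $h_s(a)=a-a^{1+s}$ over $[0,1]$), so that for $0\le x\le 1$,
\[
x_n^{\,1+1/t}=x_n+O(1/t)\quad\text{uniformly in }n,
\]
and the $O(1/t)$ correction, multiplied by $\frac{1}{G(e^t)}\sum_n 2^{-n/t}(2^ng(2^n))^{1+1/t}$ (which is bounded by Proposition~\ref{special value proposition}), tends to $0$ and is therefore annihilated by $\gamma$. Only after this does one obtain, on the positive unit ball, the linear form stated in the lemma, whence the general case follows by positive homogeneity and linearity. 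Without this estimate your proof does not establish the claimed formula.
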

\begin{proof}
Fix $\gamma\in EL(0,\infty).$ Since $\zeta_\gamma$ is a positive singular trace, by Theorem \ref{biject}, $B_{\gamma,g}$ is a positive shift-invariant functional. By Proposition \ref{special value proposition}, $B_{\gamma,g}(1)=1,$ so it is a Banach limit.

Next, for each fixed $s\in(0,\infty),$ we define the continuous function $h_s:[0,1]\rightarrow[0,\infty)$ by setting
$$h_s (a):=a-a^{1+s},\hspace{0.3em} a\in[0,1].$$
Direct calculation yields that
$$\max_{0\leq a\leq1}h_s (a)=h_s \Big(\Big(\frac{1}{s+1}\Big)^{1/s}\Big)=\Big(\frac{1}{s+1}\Big)^{1/s}\frac{s}{s+1}<s.$$
Taking $s=1/t,$ then for $0\leq x\leq1,$ the following equality
\begin{equation}\label{f5}
x_{n}^{1+1/t}=x_{n}+O(1/t)
\end{equation}
holds uniformly in $n$.
Hence,
 \begin{equation*}
 \begin{split}
 \zeta_\gamma (D_gx)
 &=\gamma\Big(t\mapsto\frac{1}{G(e^t)}\sum_{n=0}^\infty 2^n(g(2^n)x_n)^{1+1/t}\Big)\\
 &=\gamma\Big(t\mapsto\frac{1}{G(e^t)}\sum_{n=0}^\infty 2^{-n/t}(2^ng(2^n))^{1+1/t}(x_n+O(1/t))\Big)\\
 &=\gamma\Big(t\mapsto\frac{1}{G(e^t)}\sum_{n=0}^\infty (2^{-n/t}2^ng(2^n))^{1+1/t}x_n\Big),\quad0\leq x\leq1.
 \end{split}
 \end{equation*}
 Since both sides are linear in $x,$ the first assertion follows immediately. The second assertion follows directly from the correspondence established in Theorem \ref{biject}.
\end{proof}

Under an additional assumption $g$ we can write Banach limits from the previous lemma in a convenient form.
\begin{theorem}\label{blmt}
Let $g\in\mathcal{G},$  and let $G\in\Omega$ be its primitive function satisfying \eqref{Gexp} with $\alpha\geq0.$ Suppose in addition that the function $t\mapsto tg(t)$ is eventually monotone. If $B_{\gamma,g}$ is the Banach limit given in Lemma \ref{blmt0} for some $\gamma\in EL(0,\infty),$ then
\begin{equation*}
B_{\gamma,g}(x)=\frac{\log2}{\Gamma(\alpha+1)}\gamma\Big(t\mapsto\frac{1}{G(e^t)}\sum_{n=0}^\infty x_n2^{-n/t}2^ng(2^n)\Big),\quad x\in\ell_\infty.
\end{equation*}
\end{theorem}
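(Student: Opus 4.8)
\textbf{Proof proposal for Theorem \ref{blmt}.}

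The plan is to show that replacing the factor $\big(2^n g(2^n)\big)^{1+1/t}$ in the formula of Lemma \ref{blmt0} by $2^n g(2^n)$ does not change the value after applying $\gamma$; the extra eventual-monotonicity hypothesis on $t \mapsto t g(t)$ is precisely what is needed to control the difference. Concretely, write $a_n := 2^n g(2^n)$, so the hypothesis says that $\{a_n\}$ is eventually monotone. Since $g \in \mathcal{G} \subset \mathcal{R}_{-1}$, one has $g(2^n) = 2^{-n} L(2^n)$ for a slowly varying $L$, hence $a_n = L(2^n)$ is itself slowly varying in $2^n$; in particular $a_n^{1/t} \to 1$ as $t \to \infty$ for each fixed $n$, and more usefully $a_n = O(n^\varepsilon)$ and $a_n^{-1} = O(n^\varepsilon)$ for every $\varepsilon > 0$ by the Potter-type bound (cf. \cite[Lemma 4.1]{CRSS2007} applied to $G$, since $a_n \asymp G(2^n)/n \cdot n = $ \dots; more directly $a_n \le G(2^{n+1}) - G(2^{n-1})$ up to constants because $G$ is the primitive of $g$, and $G(2^n) = O(2^{n\varepsilon})$).

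The main step is the estimate: for $0 \le x \le 1$,
\begin{equation*}
\Big| \frac{1}{G(e^t)}\sum_{n=0}^\infty x_n 2^{-n/t}\big(a_n^{1+1/t} - a_n\big) \Big| \le \frac{1}{G(e^t)}\sum_{n=0}^\infty 2^{-n/t} a_n \big(a_n^{1/t} - 1\big).
\end{equation*}
I would split the inner sum at $n \approx t$. For $n \le t$, write $a_n^{1/t} - 1 = O\big(\tfrac{\log a_n}{t}\big) = O\big(\tfrac{\varepsilon \log n + C}{t}\big)$, so this block contributes $O\big(\tfrac{1}{t \, G(e^t)}\sum_{n \le t} 2^{-n/t} a_n (\log n + 1)\big)$; since $\sum_{n \le t} a_n \asymp G(2^t) = G(e^{t \log 2}) \asymp G(e^t)$ by \eqref{cond02}, and the extra $\tfrac{\log n}{t}$ factor is $O(\tfrac{\log t}{t}) = o(1)$, this block vanishes after $\gamma$. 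For $n > t$, the factor $2^{-n/t} \le 2^{-1}$ forces geometric decay once $n \gtrsim t\log t$, and using $a_n = O(n^\varepsilon)$ together with $a_n^{1/t} - 1 = O(1)$ on the transitional range $t < n \lesssim t \log t$, one shows the tail is $o(G(e^t))$ as well — here the eventual monotonicity of $\{a_n\}$ lets me compare $\sum_{n > t} 2^{-n/t} a_n$ with a monotone rearrangement and bound it by (a constant times) $a_{\lfloor t\rfloor} \cdot t = O(t^{1+\varepsilon}) = o(G(e^t))$, using again $G(e^t) \ge c\, t^{1/\varepsilon'}$ eventually is false — rather $G(e^t)$ grows faster than any polynomial is also false; instead I use $G \in \mathcal{R}_0 \circ \exp$-type behaviour: $G(e^t) = G\circ\exp(t)$ with $G\circ\exp \in \mathcal{R}_\alpha$, so $G(e^t) \asymp t^\alpha \ell(t)$ and one must check $\alpha \ge 1$ or argue more carefully. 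If $\alpha < 1$ the tail estimate needs the finer bound $\sum_{n>t} 2^{-n/t} a_n = o(G(e^t))$ obtained directly from $a_n \asymp G(2^n)/n$ and $G(2^n) \asymp (n\log 2)^\alpha \ell$, giving $\sum_{n>t} 2^{-n/t} n^{\alpha-1} \asymp t^\alpha \int_1^\infty 2^{-u} u^{\alpha-1}\,du \asymp G(e^t)$ — so this block does NOT automatically vanish, and one must instead track that $a_n^{1/t}-1 = o(1)$ uniformly for $n = O(t)$ while the bulk $\sum 2^{-n/t} a_n$ lives on $n \asymp t$, yielding the product $o(1)\cdot O(G(e^t)) = o(G(e^t))$.

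So the honest structure is: (1) reduce to $0 \le x \le 1$ by linearity and positivity, as in Lemma \ref{blmt0}; (2) show $\sum_{n=0}^\infty 2^{-n/t} a_n \asymp G(e^t)$ for large $t$ (this repackages \cite[Lemma 4.4]{principal} and Proposition \ref{special value proposition}, since this sum is exactly $\mathrm{Tr}((D_g 1)^{1+1/t})$ up to the $a_n^{1/t}$ factors — indeed it equals $\sum_n 2^n g(2^n) 2^{-n/t} = \sum_n a_n 2^{-n/t}$); (3) show the weighted mass of $\{2^{-n/t} a_n\}$ concentrates on $n \asymp t$ in the sense that $\tfrac{1}{G(e^t)}\sum_{|n - t| > \delta t \text{ or } n < \delta t} 2^{-n/t} a_n \to 0$ fails in general but the weaker "$a_n^{1/t} \to 1$ uniformly on the effective support" holds; (4) combine: $a_n^{1/t} - 1 = \exp(\tfrac{\log a_n}{t}) - 1$, and on $n \le C t$ we have $\tfrac{\log a_n}{t} = O(\tfrac{\log t}{t}) \to 0$, so $\sup_{n \le Ct}(a_n^{1/t}-1) \to 0$; for $n > Ct$ with $C$ large the weight $2^{-n/t} \le 2^{-C}$ is negligible in the normalized sum. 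Letting $C \to \infty$ after $t \to \infty$ and using that $\gamma$ is an extended limit (monotone, normalized) finishes the argument.

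The main obstacle will be step (3)/(4): making rigorous that the error $\sum_n 2^{-n/t} a_n (a_n^{1/t}-1)$ is $o(G(e^t))$ when $\alpha$ can be $0$, since then the slowly-varying factor $\ell(t)$ in $G(e^t) \asymp t^\alpha \ell(t)$ could be small and the $\log t$ losses are not obviously absorbed. The eventual monotonicity of $t \mapsto t g(t)$, i.e. of $\{a_n\}$, is the crucial lever: it allows Abel summation / comparison to an integral $\int 2^{-s/t} a(s)\,ds$ and a change of variables $s = tu$ turning the sum into $t \int 2^{-u} a(tu)\,du$, at which point $a(tu)/a(t) \to u^0 = 1$ (slow variation) by the uniform convergence theorem for regularly varying functions, and the $(a^{1/t}-1)$ factor becomes $o(1)$ uniformly on compact $u$-intervals by the same uniform-convergence theorem — dominated convergence then delivers the claim cleanly without any case split on $\alpha$.
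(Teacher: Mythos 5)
Your proposal takes a genuinely different route from the paper, but it contains a concrete error that derails the final argument, and it misidentifies where the monotonicity hypothesis actually enters.

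The key mistake is in the last paragraph: you claim that after the change of variables $s=tu$ one has $a(tu)/a(t)\to u^0=1$ ``by slow variation.'' But $a_n=L(2^n)$ with $L$ slowly varying makes $a$ slowly varying only as a function of the argument $2^n$, not of $n$. As a function of $n$ (equivalently, of $s$), $a$ is regularly varying of index $\alpha-1$, not $0$: indeed $F(t)\sim G(2^t)/\log 2\in\mathcal{R}_\alpha$ and $a(s)=F'(s)$, so by the Monotone Density Theorem $a\in\mathcal{R}_{\alpha-1}$. (For the paper's own Example \ref{ex2}\eqref{g_k}, $a_n=(n\log 2)^k$ with $\alpha=k+1$.) This matters twice. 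First, the Monotone Density Theorem \emph{is} the place where the hypothesis that $t\mapsto tg(t)$ is eventually monotone is consumed in your framework; the ``Abel summation / comparison to an integral'' step you attribute it to works for any step function and needs no monotonicity. Second, with the correct index the limit in your dominated-convergence step is $\int_0^\infty 2^{-u}u^{\alpha-1}\cdot 0\,du$, and for $0\le\alpha<1$ the would-be dominating function $u^{\alpha-1}$ (and $u^{-1}$ when $\alpha=0$) is not integrable near $0$, so the ``cleanly without any case split on $\alpha$'' promise fails; one must split the $u$-range and separately exploit that $ta(t)=o(G(e^t))$ when $\alpha=0$. The earlier bound $a_n=O(n^\varepsilon)$ you use to control the tail is likewise false whenever $\alpha>1$.

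The paper avoids all of this with a much shorter argument that you did not find. After establishing two asymptotics of the same form, namely
\begin{equation*}
\sum_{n\ge0}2^{-n/t}f(n)\sim\frac{\Gamma(\alpha+1)}{\log 2}\,G(e^t)
\quad\text{and}\quad
\sum_{n\ge0}2^{-n/t}f(n)^{1+1/t}\sim\frac{\Gamma(\alpha+1)}{\log 2}\,G(e^t)
\end{equation*}
(the first via the Laplace--Stieltjes transform and the Karamata Tauberian theorem, the second being Proposition~\ref{special value proposition}), it uses the eventual monotonicity of $n\mapsto f(n)=2^n g(2^n)$ for an entirely different purpose: it guarantees that $f(n)-1$ eventually has constant sign, hence $f(n)^{1+1/t}-f(n)=f(n)(f(n)^{1/t}-1)$ eventually has constant sign in $n$, uniformly in $t$. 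Therefore $\sum_n 2^{-n/t}\lvert f(n)^{1+1/t}-f(n)\rvert$ equals, up to a bounded head, the modulus of the \emph{difference} of the two displayed sums, which is $o(G(e^t))$. No regular-variation machinery for $a$, no Potter bounds, no dominated convergence, and no case split on $\alpha$ is needed; the only case split is on whether $f(n)\le1$ eventually or $f(n)>1$ eventually. That sign-definiteness observation is the key idea your proposal misses.
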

\begin{proof}
Set $$f(t):=\sum_{n=0}^\infty2^ng(2^n)\chi_{[n,n+1)}(t),\quad t\geq 0$$
and let $F$ be the primitive function of $f.$
By \cite[Lemma 4.4]{principal}, we have
$$F(t)\sim \frac{G(2^t)}{\log2},\quad t\to\infty.$$
By direct verification, the function $t\mapsto G(2^t)$  belongs to $\mathcal{R}_\alpha.$ Let the function $$\tilde{F}(t):=\int_{0}^\infty e^{-ut}dF(u),\quad t>0$$
denote the Laplace-Stieltjes transform of $F.$ Since $G\in\mathcal{R}_0,$ by \cite[Theorem 1.3.6]{regular}, $$ G(t)=o(t^{\beta}),\quad t\to\infty, \quad \forall\beta>0.$$
Hence, by \cite[Chapter II, Theorem 2.1]{Widder book},
$$\tilde{F}(t)<\infty,\quad \forall t>0.$$
Now using \cite[Chapter IV,~Theorem 8.1]{Tauber}, we obtain
\begin{equation}\label{blmt asymptotic1}
\tilde{F}(\frac{1}{t})\sim \frac{\Gamma(\alpha+1)}{\log2}G(2^t),\quad t\rightarrow\infty.
\end{equation}
Note that
$$\tilde{F}(\frac{\log2}{t})=\int_0^\infty e^{-\frac{u\log2}{t}}dF(u)=\int_0^\infty 2^{-u/t}f(u)du.$$
It follows from \eqref{blmt asymptotic1} that
$$\int_0^\infty 2^{-u/t}f(u)du\sim \frac{\Gamma(\alpha+1)}{\log2}G(e^t),\quad t\rightarrow\infty.$$
Observe that
\begin{equation*}
2^{-1/t}\sum_{n=0}^\infty f(n)2^{-n/t}\leq\int_0^\infty2^{-u/t}f(u)du\leq\sum_{n=0}^\infty f(n)2^{-n/t},\quad t>0.
\end{equation*}
Hence,
\begin{equation}\label{f1}
 \sum_{n=0}^\infty2^{-n/t}f(n)\sim\frac{\Gamma(\alpha+1)G(e^t)}{\log2},\quad t\rightarrow\infty.
\end{equation}
Noting $f(n)=2^ng(2^n),$ $n\geq0,$ we get
\begin{equation*}
\sum_{n=0}^\infty2^{-n/t}f(n)^{1+1/t}=\sum_{n=0}^\infty2^{-n/t}(2^ng(2^n))^{1+1/t}=\mathrm{Tr}((D_g1)^{1+1/t}).
\end{equation*}
By Proposition \ref{special value proposition},
\begin{equation}\label{f2}
\sum_{n=0}^\infty2^{-n/t}f(n)^{1+1/t}\sim \frac{\Gamma(\alpha+1)G(e^t)}{\log2},\quad t\to\infty.
\end{equation}
Now set
$$N_f:=\max\{n\geq0:f(n)\leq1\}.$$
Suppose first that the function $t\mapsto tg(t)$ is nondecreasing. In particular, $n\mapsto f(n)$ is nondecreasing. If $N_f=\infty,$ then it is clear that
$$f(n)\leq1,\quad n\geq0.$$
Thus
$$\big|f(n)^{1+1/t}-f(n)\big|=f(n)-f(n)^{1+1/t},\quad n\geq0,\quad t>0.$$
Combining this with \eqref{f1} and \eqref{f2}, we conclude that
\begin{equation}\label{f3}
\sum_{n=0}^\infty2^{-n/t}\big|f(n)^{1+1/t}-f(n)\big|=\sum_{n=0}^\infty2^{-n/t}f(n)-\sum_{n=0}^\infty2^{-n/t}f(n)^{1+1/t}=o(G(e^t)),\quad t\to\infty.
\end{equation}
If $N_f<\infty,$ then $f(n)>1$ for all $n>N_f,$ and so
$$\big|f(n)^{1+1/t}-f(n)\big|=f(n)^{1+1/t}-f(n),\quad n>N_f,\quad t>0.$$
Write
\begin{equation*}
\sum_{n=0}^\infty2^{-n/t}\big|f(n)^{1+1/t}-f(n)\big|=\big(\sum_{n=0}^{N_f}+\sum_{n=N_f+1}^\infty\big)2^{-n/t}\big|f(n)^{1+1/t}-f(n)\big|.
\end{equation*}
The first summand is bounded above by O(1). The second summand equals
$$\sum_{n=N_f+1}^\infty2^{-n/t}\big(f(n)^{1+1/t}-f(n)\big)=\sum_{n=0}^\infty2^{-n/t}\big(f(n)^{1+1/t}-f(n)\big)+O(1).$$
In view of \eqref{f1} and \eqref{f2}, we also conclude \eqref{f3}.

Next, suppose without loss of generality that $t\mapsto tg(t)$ is decreasing on $[1,\infty)$ and $g(1)=f(0)\leq1.$ In this case, the mapping $n\mapsto f(n)$ is decreasing and
thus $f(n)\leq1,$ $n\geq0.$ Similarly to the case that $t\mapsto tg(t)$ is nondecreasing and $N_f=\infty,$ we conclude that \eqref{f3} holds.

Using \eqref{f3}, for every $x\in\ell_\infty,$ we obtain
$$\sum_{n=0}^\infty x_n2^{-n/t}\big(2^ng(2^n)\big)^{1+1/t}=\sum_{n=0}^\infty x_n2^{-n/t}2^ng(2^n)+o(G(e^t)),\quad t\to\infty.$$
This combined with Lemma \ref{blmt0} completes the proof.
 \end{proof}

The previous theorem enables the following criterion of measurability with respect to all extended zeta-function residues.
\begin{theorem}\label{summation}
Let $g$ and $G$ be as in Theorem \ref{blmt}. An operator $T$ in $\mathcal{I}_g$ is zeta-measurable if and only if there exists the limit
\begin{equation*}
  \lim_{r\to1-}\frac{1}{G(2^{\frac{1}{1-r}})}\sum_{n=0}^\infty a_nr^n,\quad r\in (0,1),\quad a_n:=\sum_{j=2^n-1}^{2^{n+1}-2}\lambda(j,T).
\end{equation*}
\end{theorem}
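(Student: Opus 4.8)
The plan is to translate the zeta-measurability of $T$, via Lemma \ref{blmt0} and Theorem \ref{blmt}, into a statement purely about the Banach limits $B_{\gamma,g}$ acting on the eigenvalue-block sequence, and then recognize the resulting expression as a power-series (Abel-type) summation. First I would invoke Lemma \ref{blmt0} together with the Lidskii formula from Theorem \ref{Lid} (or rather its $\zeta_\gamma$-version in Lemma \ref{blmt0}): setting $a_n:=\sum_{j=2^n-1}^{2^{n+1}-2}\lambda(j,T)$ and $b_n:=a_n/(2^n g(2^n))$, we have
\begin{equation*}
\zeta_\gamma(T)=\frac{\Gamma(\alpha+1)}{\log 2}\,B_{\gamma,g}\big(\{b_n\}_{n\geq 0}\big),\qquad T\in\mathcal I_g.
\end{equation*}
Hence $T$ is zeta-measurable if and only if the quantity $B_{\gamma,g}(\{b_n\})$ is independent of $\gamma\in EL(0,\infty)$. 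By Theorem \ref{blmt}, since $t\mapsto tg(t)$ is eventually monotone, $B_{\gamma,g}(\{b_n\})=\frac{\log 2}{\Gamma(\alpha+1)}\gamma\big(t\mapsto \frac{1}{G(e^t)}\sum_{n=0}^\infty b_n 2^{-n/t} 2^n g(2^n)\big)=\frac{\log 2}{\Gamma(\alpha+1)}\gamma\big(t\mapsto \frac{1}{G(e^t)}\sum_{n=0}^\infty a_n 2^{-n/t}\big)$. By Lemma \ref{triple}(i), the set of values $\{\gamma(h):\gamma\in EL(0,\infty)\}$ is the interval $[\operatorname{ess\,liminf} h,\operatorname{ess\,limsup} h]$; this is a singleton exactly when $\operatorname{ess\,lim}_{t\to\infty} h(t)$ exists. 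Therefore $T$ is zeta-measurable if and only if
\begin{equation*}
\lim_{t\to\infty}\frac{1}{G(e^t)}\sum_{n=0}^\infty a_n 2^{-n/t}
\end{equation*}
exists.

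The remaining work is the change of variables converting this limit as $t\to\infty$ into the stated limit as $r\to 1^-$. I would set $r=2^{-1/t}$, so that $r\in(0,1)$, $r\to 1^-$ as $t\to\infty$, and $2^{-n/t}=r^n$; moreover $1/t=-\log_2 r=\frac{\log(1/r)}{\log 2}$, so $t=\frac{\log 2}{\log(1/r)}$ and hence $e^t=2^{1/\log_2(1/r)}$, which I would prefer to record in the cleaner form $e^t = 2^{\,t/\log 2}$ — wait, more directly: from $r=2^{-1/t}$ we get $t=\tfrac{1}{\log_2(1/r)}=\tfrac{\log 2}{-\log r}$, and as $r\to 1^-$ one has $-\log r\sim 1-r$, so $t\sim \tfrac{\log 2}{1-r}$ and $e^t = 2^{t/\log 2}$, giving $G(e^t)=G\big(2^{t/\log 2}\big)$. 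The paper's stated normalization uses $G\big(2^{1/(1-r)}\big)$, so I must check that replacing $G(e^t)$ by $G\big(2^{1/(1-r)}\big)$ does not change the limit. Since $G\circ\exp\in\mathcal R_\alpha$ (equivalently $t\mapsto G(2^t)\in\mathcal R_\alpha$), and since $\tfrac{t}{\log 2}=\tfrac{1}{-\log r}$ while $\tfrac{1}{1-r}$ differ by the asymptotic relation $\tfrac{1}{-\log r}\sim\tfrac{1}{1-r}$ as $r\to1^-$, the uniform convergence theorem for regularly varying functions (\cite[Theorem 1.5.2]{regular}, or \cite[Lemma 4.1]{CRSS2007}-type estimates) gives $G\big(2^{1/(-\log r)}\big)\sim G\big(2^{1/(1-r)}\big)$. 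Thus the two normalizations are interchangeable and the existence of one limit is equivalent to the existence of the other. This substitution step, while elementary, is the one place where I must be careful, and I would present it as the technical core: establishing $G(e^t)\sim G\big(2^{1/(1-r)}\big)$ under the correspondence $r=2^{-1/t}$.

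The main obstacle I anticipate is not any single hard estimate but rather the bookkeeping around which precise function of $r$ sits in the denominator. One must confirm that the essential limit in $t$ (an a.e.-defined object on $L_\infty$) matches the ordinary limit in $r$: since $\sum_{n=0}^\infty a_n r^n$ and $G\big(2^{1/(1-r)}\big)$ are genuine (continuous) functions of $r$ on $(0,1)$ — the power series converging because $a_n=O(2^n g(2^n))$ has subexponential growth, indeed $T\in\mathcal I_g$ forces $|a_n|\le \|T\|_{\mathcal I_g}\sum_{j=2^n-1}^{2^{n+1}-2} g(j) = O(2^n g(2^n))$, and $2^n g(2^n)$ grows slower than any $\rho^{-n}$ — the essential and ordinary limits coincide, and the monotone substitution $t\mapsto r=2^{-1/t}$ is a homeomorphism $(0,\infty)\to(0,1)$ preserving the relevant filters. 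Assembling these observations: zeta-measurability $\iff$ $\gamma$-independence of $B_{\gamma,g}(\{b_n\})$ $\iff$ existence of $\lim_{t\to\infty}\frac{1}{G(e^t)}\sum_n a_n 2^{-n/t}$ $\iff$ existence of $\lim_{r\to 1^-}\frac{1}{G(2^{1/(1-r)})}\sum_n a_n r^n$, which is exactly the claim.
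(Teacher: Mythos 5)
Your proposal is correct and follows essentially the same route as the paper: translate zeta-measurability via Lemma \ref{blmt0} and Theorem \ref{blmt} into the existence of $\lim_{t\to\infty}\frac{1}{G(e^t)}\sum_n a_n 2^{-n/t}$, then substitute $r=2^{-1/t}$. The only cosmetic difference is the final equivalence $G(2^{1/(-\log r)})\sim G(2^{1/(1-r)})$: you invoke the uniform convergence theorem for regularly varying functions, whereas the paper uses the elementary two-sided bound $\tfrac{1}{1-r}-1\leq\tfrac{1}{-\log r}\leq\tfrac{1}{1-r}+1$ together with the doubling condition $G(2t)/G(t)\to1$; both are valid.
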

\begin{proof}
For every $\gamma\in EL(0,\infty),$ let $B_{\gamma,g}:=\frac{\log2}{\Gamma(\alpha+1)}\zeta_\gamma\circ D_g.$
By Theorem \ref{biject} and the Lidskii formula \eqref{Lidf}, we have
\begin{equation*}
\zeta_\gamma(T) =\frac{\Gamma(\alpha+1)}{\log2}B_{\gamma,g}\Big(\Big\{\frac{a_n}{2^ng(2^n)}\Big\}_{n\geq0}\Big).
\end{equation*}
Further, by Theorem \ref{blmt}, we obtain
\begin{equation*}
\zeta_\gamma(T)
 =\gamma\Big(t\mapsto\frac{1}{G(e^t)}\sum_{n=0}^\infty a_n2^{-n/t}\Big).
\end{equation*}
Hence, $T$ is zeta-measurable if and only if there exists the limit
\begin{equation}\label{summation method}
\lim_{t\to\infty}\frac{1}{G(e^t)}\sum_{n=0}^\infty a_n2^{-n/t}.
\end{equation}
Set $r=2^{-1/t}$, we have $t=\frac{\log2}{-\log r}.$ So
$G(e^t)=G(2^{\frac{1}{-\log r}}).$ Thus, \eqref{summation method} is equivalent to the existence of the limit
$$\lim_{r\to1-}\frac{1}{G(2^{\frac{1}{-\log r}})}\sum_{n=0}^\infty a_nr^n.$$
Note that
$$\frac{1}{1-r}-1\leq\frac{1}{-\log r}\leq\frac{1}{1-r}+1,\quad 0<r<1.$$
It follows that
$$\frac12\cdot2^{\frac{1}{1-r}}\leq 2^{\frac{1}{-\log r}}\leq 2\cdot 2^{\frac{1}{1-r}},\quad 0<r<1.$$
Since $G\in\Omega$ satisfy \eqref{cond02}, we conclude that
$$G(2^{\frac{1}{-\log r}})\sim G(2^{\frac{1}{1-r}}),\quad r\rightarrow1-.$$
This completes the proof.
\end{proof}

Below, we list several examples for functions $g\in\mathcal{G}$ satisfying the condition in Theorem \ref{blmt}.
\begin{example}\rm{}\label{ex2}
\begin{enumerate}[(i)]\item The functions
$$\psi_n(t):=\frac{1}{c_n}\chi_{[0,c_n)}(t)+\frac{\log_{[n]}t}{t}\chi_{[c_n,\infty)}(t),\quad n\geq1,$$
where $\log_{[n]}$ is the $n$-th iterations of the logarithmic function and $c_n:=e^{[n]}$ the $n$-th iterations of $\exp.$
\item\label{g_k} The functions $$g_k(t)=\frac{k^k}{e^k}\chi_{[0,e^k)}(t)+\frac{\log^kt}{t}\chi_{[e^k,\infty)}(t),
\quad k\geq0.$$
\item The function
$$g(t)=\frac{1}{e}\chi_{[0,e)}(t)+\frac{1}{t\log t}\chi_{[e,\infty)}(t).$$
\end{enumerate}
\end{example}
\section{The principal ideals $\mathcal{I}_k$}
In this section, we use the relation between asymptotics of zeta function and eigenvalues of an operator to extended measurability result to cover arbitrary operators in more general ideals.

Set
$$g_k(t)=\frac{k^k}{e^k}\chi_{[0,e^k)}(t)+\frac{\log^kt}{t}\chi_{[e^k,\infty)}(t),
\quad k\geq0.$$
and consider ideals $\mathcal{I}_k:=\mathcal{I}_{g_k}.$ For every $k\geq0,$ one easily verifies that the function $G_k(t)=\frac{\log^{k+1}t}{k+1}$ is a primitive of $g_k,$ and satisfies condition \ref{Gexp} with $\alpha=k+1.$ Moreover, we have the following one-to-one correspondence.
\begin{corollary}
For every fixed $\gamma\in EL(0,\infty)$, let $\zeta_\gamma$ be the extended $\zeta$-function residue on $\mathcal{I}_k,$ and let $B_k:=B_{\gamma,g_k}$ be the corresponding Banach limit, then
\begin{equation*}
B_k(x)=\frac{\log^{k+1}2}{k!}\gamma\Big(t\mapsto\frac{1}{t^{k+1}}\sum_{n=0}^\infty x_n2^{-n/t}(n+1)^k\Big),\quad x\in\ell_\infty,
\end{equation*}
and
\begin{equation*}
\zeta_\gamma(T)=\frac{(k+1)!}{\log^{k+1}2}B_k\Big(\Big\{\frac{1}{(n+1)^k}\sum_{j=2^{n}-1}^{2^{n+1}-2}\lambda(j,T)\Big\}_{n=0}^{\infty}\Big),\quad T\in \mathcal{I}_k.
\end{equation*}
\end{corollary}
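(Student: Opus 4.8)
The plan is to read the statement off Lemma~\ref{blmt0} and Theorem~\ref{blmt}, specialised to $g=g_k$. All of their hypotheses are in force: $g_k\in\mathcal G$, its primitive $G_k(t)=\tfrac{1}{k+1}\log^{k+1}t$ satisfies \eqref{Gexp} with $\alpha=k+1\ge0$ (so that $\mathcal I_k\subset\mathcal Z_{G_k}$ by Corollary~\ref{zeta-lorentz equality} and $\zeta_\gamma$ is defined on $\mathcal I_k$), and $t\mapsto tg_k(t)=\log^kt$ is monotone on $[e^k,\infty)$. I would also record the arithmetic simplifications $\Gamma(\alpha+1)=\Gamma(k+2)=(k+1)!$ and $G_k(e^t)=\tfrac{1}{k+1}t^{k+1}$.

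For the first formula, substituting these into the conclusion of Theorem~\ref{blmt} gives
\begin{equation*}
B_k(x)=\frac{\log2}{(k+1)!}\,\gamma\Big(t\mapsto\frac{k+1}{t^{k+1}}\sum_{n=0}^\infty x_n2^{-n/t}\,2^ng_k(2^n)\Big)=\frac{\log2}{k!}\,\gamma\Big(t\mapsto\frac{1}{t^{k+1}}\sum_{n=0}^\infty x_n2^{-n/t}\,2^ng_k(2^n)\Big),
\end{equation*}
so it only remains to pass from the weight $2^ng_k(2^n)$ to $\log^k2\,(n+1)^k$. Here $2^ng_k(2^n)=\log^k(2^n)=\log^k2\cdot n^k$ as soon as $2^n\ge e^k$, while the finitely many lower terms contribute $O(1)$; combining $(n+1)^k-n^k\le k(n+1)^{k-1}$ with the power-sum asymptotics $\sum_{n\ge0}2^{-n/t}(n+1)^m\sim\tfrac{m!\,t^{m+1}}{\log^{m+1}2}$ (obtained as in the proof of Theorem~\ref{blmt}, or from the $r\to1^-$ behaviour of $\sum r^n(n+1)^m$) yields $\sum_{n\ge0}2^{-n/t}\big|2^ng_k(2^n)-\log^k2\,(n+1)^k\big|=O(t^k)$. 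Hence for $0\le x\in\ell_\infty$ the two candidate functions of $t$ are bounded on a half-line and differ there by an $O(1/t)$ term; by Lemma~\ref{triple}\eqref{elc} and linearity of $\gamma$ on $L_\infty$ their $\gamma$-values coincide, and since both sides are linear in $x$ the first displayed identity follows for all $x\in\ell_\infty$.

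For the second formula I would start from the second assertion of Lemma~\ref{blmt0}, which with $\alpha=k+1$ reads $\zeta_\gamma(T)=\tfrac{(k+1)!}{\log2}\,B_k\big(\{\tfrac{1}{2^ng_k(2^n)}\sum_{j=2^n-1}^{2^{n+1}-2}\lambda(j,T)\}_{n\ge0}\big)$ for $T\in\mathcal I_k$. Since $T\in\mathcal I_k$ we have $|\lambda(j,T)|\le\mu(j,T)=O(g_k(j))$, so each block sum is $O(2^ng_k(2^n))=O(n^k)$, whereas $\tfrac{1}{2^ng_k(2^n)}-\tfrac{1}{\log^k2\,(n+1)^k}=O(n^{-k-1})$ for large $n$ (and it is identically $0$ when $k=0$). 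Thus the sequence $\{\tfrac{1}{2^ng_k(2^n)}\sum_j\lambda(j,T)\}$ differs from $\{\tfrac{1}{\log^k2\,(n+1)^k}\sum_j\lambda(j,T)\}$ by an element of $c_0(\mathbb N)$, on which the Banach limit $B_k$ vanishes; pulling the constant $\log^{-k}2$ through $B_k$ and multiplying by $\tfrac{(k+1)!}{\log2}$ gives $\zeta_\gamma(T)=\tfrac{(k+1)!}{\log^{k+1}2}B_k\big(\{\tfrac{1}{(n+1)^k}\sum_j\lambda(j,T)\}\big)$.

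The one step that needs genuine care rather than bookkeeping is the replacement of the weight $n^k$ (equivalently $\log^{-k}2\cdot 2^ng_k(2^n)$) by the cleaner $(n+1)^k$: one has to verify that the induced perturbation is negligible at precisely the right scale --- $o(t^{k+1})$ under the $\gamma$-limit in the first identity and a null sequence at the level of $\ell_\infty$ in the second --- which is exactly where the power-sum asymptotics and the $\mathcal I_k$-quasinorm bound on block sums enter. Everything else is substitution into the two previously established identities.
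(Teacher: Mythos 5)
Your proposal is correct and follows essentially the same route as the paper: specialise Lemma~\ref{blmt0} and Theorem~\ref{blmt} to $g=g_k$, use $2^ng_k(2^n)=\log^k2\cdot n^k$ for large $n$, control the discrepancy between $n^k$ and $(n+1)^k$ via $\sum_{n\ge0}2^{-n/t}n^{k-1}=O(t^k)$ (the paper does this by an integral comparison, you by the equivalent power-sum asymptotics), and then handle the second formula by observing that the two normalising weights differ by a null sequence which any Banach limit kills.
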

\begin{proof}
By Theorem \ref{blmt}, we have
\begin{equation}\label{banach limit expression}
\begin{split}
B_{k}(x)&=\frac{\log2}{\Gamma(\alpha+1)}\gamma\Big(t\mapsto\frac{1}{G_k(e^t)}\sum_{n=0}^\infty x_n2^{-n/t}2^ng_k(2^n)\Big)\\
&=\frac{\log2}{k!}\gamma\Big(t\mapsto\frac{1}{t^{k+1}}\sum_{n=0}^\infty x_n2^{-n/t}2^ng_k(2^n)\Big),\quad x\in\ell_\infty.
\end{split}
\end{equation}
If $k=0,$ then $2^ng_k(2^n)=1$ for all $n\geq0.$ The first assertion (and therefore, by Lemma \ref{blmt0}, the second assertion) follows immediately. Let $k\geq1,$ note that
$$2^ng_k(2^n)=\log^k2\cdot n^k,\quad n\geq e^k,$$
and that $(n+1)^k-n^k=O(n^{k-1}).$ For every $x\in\ell_\infty,$ it follows that
$$\log^k2\cdot\sum_{n=0}^\infty x_n2^{-n/t}(n+1)^k=\sum_{n=0}^\infty x_n2^{-n/t}2^ng_k(2^n)+O(\sum_{n=0}^\infty 2^{-n/t}n^{k-1}),\quad t>0.$$
We have
$$\sum_{n=0}^\infty2^{-n/t}n^{k-1}\leq\int_0^{\infty}2^{-s/t} (s+1)^{k-1}\mathrm{~d}s\leq\int_0^12^{-s/t} (s+1)^{k-1}\mathrm{~d}s+2^{k-1}\int_0^{\infty}2^{-s/t} s^{k-1}\mathrm{~d}s .$$
The first summand is bounded above by $O(1),$ the second summand equals $2^{k-1}\frac{\Gamma(k)t^k}{\log^k2}.$
Combining those estimates, we conclude that
$$\log^k2\cdot\sum_{n=0}^\infty x_n2^{-n/t}(n+1)^k=\sum_{n=0}^\infty x_n2^{-n/t}2^ng_k(2^n)+O(t^k),\quad t\to\infty.$$
Therefore,
$$\frac{\log^{k+1}2}{k!}\gamma\Big(t\mapsto\frac{1}{t^{k+1}}\sum_{n=0}^\infty x_n2^{-n/t}(n+1)^k\Big)=\frac{\log2}{k!}\gamma\Big(t\mapsto\frac{1}{t^{k+1}}\sum_{n=0}^\infty x_n2^{-n/t}2^ng_k(2^n)\Big)+\gamma(O(\frac1t)).$$
Comparing this with \eqref{banach limit expression}, the first assertion follows immediately since $\gamma(O(\frac1t))=0.$

Now we prove the second assertion. By Lemma \ref{blmt0}, $$\zeta_\gamma(T)=\frac{(k+1)!}{\log2}B_k\Big(\Big\{\frac{1}{2^ng_k(2^n)}\sum_{j=2^n-1}^{2^{n+1}-2}\lambda(j,T)\Big\}_{n=0}^\infty\Big).$$
It is easy to see that
$$ \frac{1}{2^ng_k(2^n)}\sum_{j=2^n-1}^{2^{n+1}-2}\lambda(j,T)-\frac{1}{\log^k2\cdot(n+1)^k}\sum_{j=2^n-1}^{2^{n+1}-2}\lambda(j,T)=o(1),\quad n\to\infty.$$
Since $B_k$ is Banach limit, the second assertion follows.
\end{proof}

The following lemma is a variant of classical Tauberian theorem (\cite[Chapter I,~Theorem 7.4]{Tauber}) for complex-valued sequences.
\begin{lemma}\label{tauber}
Suppose $(x_n)_{n\geq0}\subset\mathbb{C}$ and $|x_{n}|=O(n^{\alpha-1})$ for some $\alpha\geq0$, if
\begin{equation}\label{tauber1}
\lim_{r\rightarrow1-}(1-r)^{\alpha}\sum_{n=0}^{\infty}x_{n}r^n=c,
\end{equation}
then
$$\lim_{n\to\infty}{n^{-\alpha}}\sum_{j=0}^{n}x_j= \frac{c}{\Gamma(\alpha+1)}.$$
\end{lemma}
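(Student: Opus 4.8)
The statement is a Tauberian theorem: from Abel-type summability with gauge $(1-r)^{-\alpha}$ plus the one-sided-type growth bound $|x_n| = O(n^{\alpha-1})$, deduce Cesàro-type asymptotics $n^{-\alpha}\sum_{j\le n} x_j \to c/\Gamma(\alpha+1)$. Since the paper already cites the classical real-variable Tauberian theorem \cite[Chapter I, Theorem 7.4]{Tauber}, the natural approach is to reduce the complex-valued case to the real-valued case to which that theorem applies directly. First I would split $x_n = u_n + i v_n$ with $u_n = \Re x_n$, $v_n = \Im x_n$. The hypothesis \eqref{tauber1} gives, by taking real and imaginary parts (legitimate since the limit as $r\to 1-$ of a complex quantity splits into the limits of its real and imaginary parts),
\begin{equation*}
\lim_{r\to1-}(1-r)^\alpha\sum_{n=0}^\infty u_n r^n = \Re c,\qquad \lim_{r\to1-}(1-r)^\alpha\sum_{n=0}^\infty v_n r^n = \Im c.
\end{equation*}
Moreover $|u_n|\le |x_n| = O(n^{\alpha-1})$ and likewise for $v_n$, so each of the real sequences $(u_n)$, $(v_n)$ satisfies the hypotheses of the classical scalar Tauberian theorem with the Tauberian side-condition being exactly the two-sided bound $|u_n| = O(n^{\alpha-1})$ (this is a strictly stronger condition than the usual one-sided Tauberian condition $u_n \ge -K n^{\alpha-1}$, so it certainly suffices).

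**Carrying it out.** Applying \cite[Chapter I, Theorem 7.4]{Tauber} separately to $(u_n)$ and to $(v_n)$ yields
\begin{equation*}
\lim_{n\to\infty} n^{-\alpha}\sum_{j=0}^n u_j = \frac{\Re c}{\Gamma(\alpha+1)},\qquad \lim_{n\to\infty} n^{-\alpha}\sum_{j=0}^n v_j = \frac{\Im c}{\Gamma(\alpha+1)}.
\end{equation*}
Adding the first to $i$ times the second recombines the partial sums: $\sum_{j\le n} x_j = \sum_{j\le n} u_j + i\sum_{j\le n} v_j$, so $n^{-\alpha}\sum_{j=0}^n x_j \to (\Re c + i\,\Im c)/\Gamma(\alpha+1) = c/\Gamma(\alpha+1)$, which is the claim. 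One small point to state cleanly: the scalar theorem in \cite{Tauber} is typically phrased with the normalization $(1-r)^{-\alpha}$ on the Abel means and $\Gamma(\alpha+1)$ (equivalently $\Gamma(\alpha)$ after an index shift) in the Cesàro limit; I would just verify the constant matches the convention used there, or absorb the discrepancy by noting $n^{-\alpha}\sum_{j=0}^n \sim \frac{1}{\Gamma(\alpha+1)}\cdot\frac{n^\alpha}{n^\alpha}$ via the standard comparison with the prototype $x_n = \binom{n+\alpha-1}{n}$.

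**Main obstacle.** There is essentially no deep obstacle here — the content is entirely contained in the cited classical theorem, and the only work is the bookkeeping of the reduction $\mathbb{C} \rightsquigarrow \mathbb{R}$ and a check that the two-sided growth bound $|x_n| = O(n^{\alpha-1})$ implies the one-sided Tauberian condition that \cite[Chapter I, Theorem 7.4]{Tauber} requires. The one genuinely careful step is confirming that the real-variable theorem as stated in that reference has exactly the $O(n^{\alpha-1})$ growth hypothesis (and not, say, $o(n^{\alpha-1})$ or a slowly-varying gauge), so that the citation can be invoked verbatim; if the reference states it with a slowly varying function $L$, one simply specializes $L\equiv 1$. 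I do not anticipate needing any estimate beyond what is in the reference, so the proof will be short.
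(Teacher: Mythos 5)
Your proof is correct and follows the same route as the paper: split $x_n$ into real and imaginary parts, observe that the two-sided bound $|x_n|=O(n^{\alpha-1})$ yields the one-sided Tauberian condition $\Re(x_n),\Im(x_n)\ge -\kappa n^{\alpha-1}$ required by the cited real-variable theorem, apply that theorem to each part, and recombine. The paper's proof does exactly this, including the explicit remark that the two-sided bound implies the one-sided side condition, so there is nothing to add.
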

\begin{proof}
The assumption $|x_{n}|=O(n^{\alpha-1})$ implies that the series $\sum_{n=0}^{\infty}x_{n}r^n$ converges absolutely for $|r|<1$, and that there exists $\kappa>0$ satisfying $$\Re(x_n),\Im(x_n)\geq-\kappa n^{\alpha-1},\quad n\geq0.$$
Here, $\Re(x_n)$ and $\Im(x_n)$ are real and imaginary parts of $x_n,$ respectively.
By \eqref{tauber1}, we have
$$\lim_{r\rightarrow1-}(1-r)^{\alpha}\sum_{n=0}^{\infty}\Re(x_{n})r^n=\Re(c),$$
Applying the original Tauberian theorem \cite[Theorem 7.4]{Tauber} to the sequence $(\Re(x_n))_{n\geq0},$ we obtain
$$\lim_{n\to\infty}n^{-\alpha}\sum_{j=0}^{n}\Re(x_j)= \frac{\Re(c)}{\Gamma(\alpha+1)}.$$
Similarly, for the imaginary part, we have
$$\lim_{n\to\infty}n^{-\alpha}\sum_{j=0}^{n}\Im(x_j)= \frac{\Im(c)}{\Gamma(\alpha+1)}.$$
Combining those estimates, the assertion follows.
\end{proof}
The following theorem is the main result of this section, which extends \cite[Theorem~4.2]{SUZ2017}.
\begin{theorem}\label{main}
Let $V\in\mathcal{L}_{\infty},~0\leq T\in\mathcal{I}_k$ and $c\in\mathbb{C}.$ The following are equivalent:
\begin{enumerate}[(i)]
  \item\label{main1} All Dixmier traces equal $c$ on $VT$, that is
$$\lim_{n\rightarrow\infty}\frac{1}{ G_{k}(n+1)}\sum_{j=0}^n\lambda(j,VT)=c.$$
  \item\label{main2} The operator $VT$ is $\zeta$-measurable and $\zeta_{\gamma}(VT)=(k+1)!c$ for all $\gamma\in EL(0,\infty).$
  \item\label{main3} There exists the limit
  \begin{equation*}
  \lim_{t\rightarrow\infty}\frac{1}{t^{k+1}}\mathrm{Tr}(VT^{1+1/t})=k!c.
  \end{equation*}
\end{enumerate}
\end{theorem}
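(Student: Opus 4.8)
The plan is to prove the cycle $\eqref{main1}\Rightarrow\eqref{main3}\Rightarrow\eqref{main2}\Rightarrow\eqref{main1}$, reducing everything to statements about the dyadic block sums $a_n:=\sum_{j=2^n-1}^{2^{n+1}-2}\lambda(j,VT)$ and using the identity $\mathrm{Tr}(VT^{1+1/t})=\zeta_{\gamma,V}(T)$-type expressions from Section~4. First I would record the key reformulations: by \eqref{Vzeta} we have $\zeta_{\gamma,V}(T)=\zeta_\gamma(VT)$, and by the corollary preceding Lemma~\ref{tauber} (applied to $VT\in\mathcal{I}_k$, which lies in $\mathcal{I}_k$ since $\mathcal{I}_k$ is an ideal) together with the Lidskii formula \eqref{Lidf},
\begin{equation*}
\zeta_\gamma(VT)=\gamma\Big(t\mapsto\frac{1}{t^{k+1}}\sum_{n=0}^\infty a_n 2^{-n/t}\Big),\qquad \gamma\in EL(0,\infty).
\end{equation*}
Moreover $\mathrm{Tr}(VT^{1+1/t})=\zeta_{\gamma,V}$'s inner function times $t^{k+1}$ up to the normalisation; more precisely, chasing the proof of Theorem~\ref{summation} adapted to $g_k$, the function $t\mapsto \frac{1}{t^{k+1}}\mathrm{Tr}(VT^{1+1/t})$ and the function $t\mapsto \frac{1}{t^{k+1}}\sum_n a_n 2^{-n/t}$ differ by something tending to $0$ (this uses that $VT$ and its diagonal part have the same eigenvalue block sums and an estimate like \eqref{f3} for the weights $2^n g_k(2^n)$ versus $\log^k2\cdot(n+1)^k$). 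I also need the standard bound $|a_n|\le \sum_{j=2^n-1}^{2^{n+1}-2}\mu(j,VT)\le C\,2^n g_k(2^n)=O(2^n\cdot 2^{-n}\log^k 2^n)=O(n^k)$, so with $\alpha=k+1$ the sequence $(a_n)$ satisfies the hypothesis $|a_n|=O(n^{\alpha-1})$ of Lemma~\ref{tauber}.

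For \eqref{main1}$\Rightarrow$\eqref{main3}: condition \eqref{main1} says $\frac{1}{G_k(n+1)}\sum_{j=0}^n\lambda(j,VT)\to c$, i.e. $\frac{k+1}{\log^{k+1}(n+1)}\sum_{j=0}^{2^{m}-2}\lambda(j,VT)\to c$ along $n=2^m-2$; rewriting the partial sums in dyadic blocks gives $\sum_{m=0}^{M-1} a_m\sim \frac{c}{k+1}\log^{k+1}2^{M}=c\,M^{k+1}\frac{\log^{k+1}2}{k+1}$, hence $M^{-(k+1)}\sum_{m=0}^{M-1}a_m\to \frac{c\log^{k+1}2}{k+1}$. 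Now I apply Lemma~\ref{tauber} in the reverse (Abelian) direction — actually I want the Abelian implication: from $M^{-(k+1)}\sum_{m<M}a_m\to L$ deduce $\lim_{r\to1-}(1-r)^{k+1}\sum a_m r^m=\Gamma(k+2)L$. This is the classical Abelian part of the Hardy–Littlewood/Karamata theorem (it holds with no Tauberian side condition), which with $r=2^{-1/t}$, $1-r\sim \frac{\log 2}{t}$, turns into $\frac{1}{t^{k+1}}\sum_n a_n 2^{-n/t}\to \frac{1}{\log^{k+1}2}\Gamma(k+2)\cdot\frac{c\log^{k+1}2}{k+1}=k!\,c$. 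Combined with the asymptotic equivalence of $\frac{1}{t^{k+1}}\mathrm{Tr}(VT^{1+1/t})$ to this quantity, \eqref{main3} follows.

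For \eqref{main3}$\Rightarrow$\eqref{main2}: by the same asymptotic equivalence, \eqref{main3} gives $\lim_{t\to\infty}\frac{1}{t^{k+1}}\sum_n a_n 2^{-n/t}=k!c$, and since every $\gamma\in EL(0,\infty)$ agrees with the limit on convergent functions, $\zeta_\gamma(VT)=k!c$ for all $\gamma$ — wait, here I must be careful about the normalisation: tracking constants through the corollary shows $\zeta_\gamma(VT)=(k+1)!\,c$ is the correct target (the extra factor $k+1$ vs $k!$ comes from $\Gamma(\alpha+1)=\Gamma(k+2)=(k+1)!$ in Lemma~\ref{blmt0}), so I will fix the constant bookkeeping once at the start and carry it consistently; in particular the function in \eqref{main3} must itself be normalised so that its limit is $k!c$ while $\zeta_\gamma$ picks up $(k+1)!c$, consistent with the statement. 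In any case, once the inner function converges, $\zeta$-measurability of $VT$ is immediate and the common value is the stated one. For \eqref{main2}$\Rightarrow$\eqref{main1}: $\zeta$-measurability with common value $(k+1)!c$ means $\lim_{t\to\infty}\frac1{t^{k+1}}\sum_n a_n 2^{-n/t}$ exists (taking $\gamma$ to range over $EL(0,\infty)$ and using Lemma~\ref{triple}\eqref{ela}) and equals $k!c$ after renormalisation; substituting $r=2^{-1/t}$ this is exactly the Abel-type limit $\lim_{r\to1-}(1-r)^{k+1}\sum a_n r^n=\Gamma(k+2)k!c/\log^{k+1}2\cdot\log^{k+1}2 $ — i.e. a finite limit of the form in Theorem~\ref{summation} — and now Lemma~\ref{tauber} (the genuine Tauberian implication, whose side condition $|a_n|=O(n^{k})$ we verified) yields $n^{-(k+1)}\sum_{j\le n}a_j\to$ the corresponding constant. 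Undoing the dyadic blocking and using $G_k(2^m)\sim G_k(n+1)$ for $n\in[2^m-1,2^{m+1}-2]$ recovers \eqref{main1}.

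The main obstacle is the first bridge — going from $\zeta_\gamma(VT)$, defined via $\mathrm{Tr}(VT^{1+1/t})$ for the genuine operator $VT$, to the clean power series $\sum_n a_n r^n$ in the eigenvalues. This requires both the Lidskii reduction (Theorem~\ref{Lid}, via the corollary) and the replacement of the weights $2^n g_k(2^n)$ by $\log^k2\cdot(n+1)^k$ with an $o(t^{k+1})$ error, mirroring estimate \eqref{f3}; one must check that multiplying $T$ by a bounded $V$ does not spoil the block-sum bound $|a_n|=O(n^k)$, which follows from $|a_n|\le\sum_{2^n-1}^{2^{n+1}-2}\mu(j,VT)\le\|V\|_\infty\sum_{2^n-1}^{2^{n+1}-2}\mu(j,T)$ and $T\in\mathcal I_k$. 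The second delicate point is keeping the constants $\log^{k+1}2$, $\Gamma(k+2)=(k+1)!$, and $k!$ straight across the three equivalences and the substitution $r=2^{-1/t}$, $1-r\sim(\log2)/t$; the Abelian direction needed for \eqref{main1}$\Rightarrow$\eqref{main3} is elementary (partial summation / dominated convergence) and requires no Tauberian hypothesis, whereas \eqref{main2}$\Rightarrow$\eqref{main1} is where Lemma~\ref{tauber} does the real work.
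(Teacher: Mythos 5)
Your overall cycle is sound and the constants are tracked correctly, but you diverge from the paper's route in two places, and one of your bridges is justified too loosely.

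The paper proves $(i)\Rightarrow(ii)$ in one line: Proposition~\ref{scalar} together with Corollary~\ref{zeta-lorentz equality} shows that every $\zeta_\gamma$ is a scalar multiple of a Dixmier trace on $\mathcal I_k$, and evaluating both on $\mathrm{diag}(g_k)$ (Proposition~\ref{special value proposition}) fixes the constant as $(k+1)!$; hence $(i)$ gives $\zeta_\gamma(VT)=(k+1)!c$ immediately, with no Abelian argument and no passage through the dyadic power series. You instead go $(i)\Rightarrow(iii)$ by an elementary Abelian theorem applied to the block sums $a_n$; that argument is correct (the Abelian direction needs no Tauberian side condition, and the substitution $r=2^{-1/t}$, $1-r\sim(\log 2)/t$ gives the stated constant $k!c$), but it is longer and uses more machinery than the paper's. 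Similarly, for $(ii)\Leftrightarrow(iii)$ the paper only needs \eqref{vzeta}, \eqref{Vzeta} and $G_k(e^t)=t^{k+1}/(k+1)$, which give $\zeta_\gamma(VT)=\gamma\bigl(t\mapsto\frac{k+1}{t^{k+1}}\mathrm{Tr}(VT^{1+1/t})\bigr)$ directly and hence the equivalence via Lemma~\ref{triple}\eqref{ela}; your detour through $\sum_n a_n 2^{-n/t}$ works but is unnecessary there.

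The one place where your write-up is genuinely vague is the claim that $\frac{1}{t^{k+1}}\mathrm{Tr}(VT^{1+1/t})$ and $\frac{1}{t^{k+1}}\sum_n a_n 2^{-n/t}$ differ by $o(1)$, justified by hand-waving at ``an estimate like \eqref{f3}'' and ``$VT$ and its diagonal part.'' Estimate \eqref{f3} is proved for the diagonal operator $D_g 1$, not for a general non-normal $VT$, and $\mathrm{Tr}(VT^{1+1/t})$ is not $\mathrm{Tr}(|VT|^{1+1/t})$, so a direct repeat of that computation does not obviously go through. The correct justification (which the paper supplies implicitly) is the chain of identities: by \eqref{Vzeta} one has $\gamma\bigl(\frac{k+1}{t^{k+1}}\mathrm{Tr}(VT^{1+1/t})\bigr)=\zeta_\gamma(VT)$, while the Lidskii formula (Theorem~\ref{Lid}) together with Lemma~\ref{blmt0} and Theorem~\ref{blmt} gives $\zeta_\gamma(VT)=\gamma\bigl(\frac{k+1}{t^{k+1}}\sum_n a_n 2^{-n/t}\bigr)$; since this holds for every $\gamma\in EL(0,\infty)$ and $\gamma$ is linear, the difference of the two inner functions has all extended limits equal to zero, hence tends to zero. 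Your conclusion is right, but you should replace the appeal to \eqref{f3} by this argument. Your $(ii)\Rightarrow(i)$ (Theorem~\ref{summation}, Lemma~\ref{tauber} with the bound $|a_n|\le\|V\|_\infty\sum\mu(j,T)=O(n^k)$, then undoing the dyadic blocking using $G_k(2t)/G_k(t)\to1$) matches the paper exactly.
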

\begin{proof}
\eqref{main1}$\Longrightarrow$ \eqref{main2}. For every $\gamma\in EL(0,\infty)$, it follows from Theorem \ref{scalar}, that the functional $\frac{1}{(k+1)!}\zeta_\gamma$ is a Dixmier trace on $\mathcal{I}_k.$ The implication is straightforward.

\eqref{main2}$\Longrightarrow$\eqref{main1}. By Theorem \ref{summation} and its proof, we have
$$ \lim_{r\rightarrow1-}(1-r)^{k+1} \sum_{n=0}^{\infty}a_{n}r^{n}=k!c\log^{k+1}2,$$
where $a_n:=\sum_{j=2^n-1}^{2^{n+1}-2}\lambda(j,VT)$.
Since $|a_n|=O(n^k)$, then using Lemma \ref{tauber}, we obtain

$$\lim_{n\to\infty}\frac{1}{n^{k+1}}\sum_{j=0}^{n}a_{j}=\frac{c\log^{k+1} 2}{k+1},$$
that is
\begin{equation*}
  \lim_{n\to\infty}\frac{1}{G_k(2^n)}\sum_{j=0}^{2^{n+1}-2}\lambda(j,VT)=c.
\end{equation*}
Since $G_k(t)\to\infty$ as $t\to\infty,$ $\sum_{j=2^n-1}^{2^{n+1}-2}\lambda(j,VT)=o(G_k(2^n))$ and
$$\lim_{t\to\infty}\frac{G_k(2t)}{G_k(t)}=1,$$
it follows that
$$\lim_{t\to\infty}\frac{1}{G_k(n+1)}\sum_{j=0}^n\lambda(j,VT)=c.$$

This proves \eqref{main1}.

\eqref{main2}$\Longleftrightarrow$ \eqref{main3}. By \eqref{vzeta} and \eqref{Vzeta}, we have
$$\zeta_{\gamma}(VT)=\zeta_{\gamma,V}(T)=\gamma\left(t\mapsto\frac{k+1}{t^{k+1}}\mathrm{Tr}(VT^{1+1/t}) \right),\quad\forall~\gamma\in EL(0,\infty).$$
Thus, $\zeta_\gamma(VT)=(k+1)!c$ for all $\gamma\in EL(0,\infty)$ if and only if there exists the limit
$$ \lim_{t\rightarrow\infty}\frac{1}{t^{k+1}}\mathrm{Tr}(VT^{1+1/t})=k!c.$$
The proof is complete.
\end{proof}
We'll finish this section with a corollary of the main result.

Let $C:\ell_\infty\rightarrow\ell_\infty$ be the $\mathrm{Ces\grave{a}ro}$ operator defined by
$$(Cx)_n=\frac{1}{n+1}\sum_{i=0}^nx_i,\quad n\geq0.$$
Let $T_0$ be a compact operator such that $\mu(n,T_0)=\frac{1}{n+1},~n\geq0,$ then for every $T\in\mathcal{C}_0(H)$, \cite[Proposition 3.14]{DFWW2004} yields
\begin{equation}\label{tensor}
\mu(T\otimes T_0)\leq C\mu(T)\leq 2\mu(T\otimes T_0).
\end{equation}
For every $k\geq0$, we denote by $\mathcal{L}_{1,\infty}^{\otimes (k+1)}$ the algebraic tensor product
$\underbrace{\mathcal{L}_{1,\infty}\otimes\mathcal{L}_{1,\infty}\otimes\cdots\otimes\mathcal{L}_{1,\infty}}_{\mathrm{k+1~times}}$. Using induction in $k$ and \eqref{tensor}, we have that
$$T\in\mathcal{L}_{1,\infty}^{\otimes (k+1)}\Rightarrow T\in \mathcal{I}_k(H^{\otimes (k+1)}),$$
moreover, $\mathcal{L}_{1,\infty}^{\otimes (k+1)}$ generates $\mathcal{I}_k(H^{\otimes (k+1)})$.
\begin{corollary}\label{terson_product_cor}
Let $ V_0, V_1,\cdots ,V_k\in\mathcal{L}_{\infty}$ and let $0\leq T_0, T_1,\cdots, T_k\in\mathcal{L}_{1,\infty}$. If $V_iT_i,~0\leq i\leq k$ are all Dixmier-measurable, then so is the operator $\bigotimes_{i=0}^kV_iT_i.$ In particular, if $ A_0,A_1,\cdots,A_k\in \mathcal{L}_{1,\infty}$ are Dixmier-measurable, then the operator $\bigotimes_{i=0}^kA_i$ is Dixmier-measurable.
\end{corollary}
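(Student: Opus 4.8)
The plan is to reduce the statement about the $(k+1)$-fold tensor product to an application of Theorem \ref{main}, using the Connes--Moscovici-type characterization in part \eqref{main3} as the computational engine, together with the multiplicativity of the classical trace on tensor products. First I would recall that $0\le T_i\in\mathcal{L}_{1,\infty}=\mathcal I_0$, so by the remarks preceding the corollary (induction on $k$ via \eqref{tensor}) the operator $\bigotimes_{i=0}^k T_i$ lies in $\mathcal I_k(H^{\otimes(k+1)})$, and $\bigotimes_{i=0}^k V_i$ is a bounded operator on $H^{\otimes(k+1)}$; hence Theorem \ref{main} is applicable to $\bigl(\bigotimes_{i=0}^k V_i\bigr)\bigl(\bigotimes_{i=0}^k T_i\bigr)=\bigotimes_{i=0}^k V_iT_i$. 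By hypothesis each $V_iT_i$ is Dixmier-measurable in $\mathcal L_{1,\infty}$, so Theorem \ref{main} (with $k=0$) gives, for each $i$, a constant $c_i\in\mathbb C$ with
$$\lim_{t\to\infty}\frac1t\,\mathrm{Tr}\bigl((V_iT_i)^{1+1/t}\bigr)\cdot\text{(correcting for }V_i\text{ inside)}=c_i,$$
more precisely $\lim_{t\to\infty}\frac1t\,\mathrm{Tr}(V_i\,T_i^{1+1/t})=0!\,c_i=c_i$ by the equivalence \eqref{main3}$\Leftrightarrow$\eqref{main1}.

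The key computation is then the tensor-factorization of the zeta function. Since $\mathrm{Tr}$ is multiplicative on tensor products and $(A\otimes B)^{s}=A^{s}\otimes B^{s}$ for positive $A,B$ and $s>0$, and since $V_iT_i$ need not be positive I would instead work with the substituted functional: writing $V:=\bigotimes_i V_i$ and $T:=\bigotimes_i T_i$, one has $VT^{1+1/t}=\bigotimes_{i=0}^k V_i T_i^{1+1/t}$, hence
$$\mathrm{Tr}\bigl(VT^{1+1/t}\bigr)=\prod_{i=0}^k \mathrm{Tr}\bigl(V_iT_i^{1+1/t}\bigr).$$
Each factor satisfies $\mathrm{Tr}(V_iT_i^{1+1/t})=c_i t + o(t)$ as $t\to\infty$ by the $k=0$ case of \eqref{main3}. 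Multiplying the $k+1$ asymptotics gives $\mathrm{Tr}(VT^{1+1/t})=\bigl(\prod_{i=0}^k c_i\bigr)t^{k+1}+o(t^{k+1})$, i.e.
$$\lim_{t\to\infty}\frac{1}{t^{k+1}}\mathrm{Tr}\bigl(VT^{1+1/t}\bigr)=\prod_{i=0}^k c_i.$$
Setting $c:=\frac{1}{k!}\prod_{i=0}^k c_i$, this is exactly condition \eqref{main3} of Theorem \ref{main} for the ideal $\mathcal I_k$, so \eqref{main1} holds: all Dixmier traces equal $c$ on $\bigotimes_{i=0}^k V_iT_i$, which is the definition of Dixmier-measurability on $\mathcal I_k$. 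The final sentence of the corollary is the special case $V_i=I$, $T_i=A_i$ (or rather $A_i=V_iT_i$ with $V_i$ a partial isometry in the polar decomposition, noting $A_i\in\mathcal L_{1,\infty}$ Dixmier-measurable is assumed directly).

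The main obstacle is bookkeeping around positivity and the meaning of ``measurable'' for non-positive operators: the zeta residues $\zeta_\gamma$ are a priori defined on the positive cone and extended by linearity through real/imaginary and positive/negative parts, so one must justify that $\mathrm{Tr}(VT^{1+1/t})$ with $V$ bounded (not positive) is the right object — this is precisely what \eqref{vzeta}--\eqref{Vzeta} and the implication \eqref{main2}$\Leftrightarrow$\eqref{main3} in Theorem \ref{main} provide, so I would lean on those rather than re-deriving them. A second minor point is making the asymptotic multiplication rigorous when some $c_i$ may be zero or complex: since each factor is of the form $c_i t + o(t)$ with the error genuinely $o(t)$ (uniformly, from the existence of the limit), the product expands to $\prod c_i\, t^{k+1}+o(t^{k+1})$ regardless of whether individual $c_i$ vanish, because each $\mathrm{Tr}(V_iT_i^{1+1/t})=O(t)$. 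One should also note $T^{1+1/t}=\bigl(\bigotimes_i T_i\bigr)^{1+1/t}=\bigotimes_i T_i^{1+1/t}$ requires the $T_i\ge0$ hypothesis, which is given. Everything else is routine.
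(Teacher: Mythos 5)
Your proof is correct and follows essentially the same route as the paper: both reduce the assertion to Theorem \ref{main} by exploiting the factorization $\mathrm{Tr}\big(\big(\bigotimes_i V_i\big)\big(\bigotimes_i T_i\big)^{1+1/t}\big)=\prod_i\mathrm{Tr}(V_iT_i^{1+1/t})$ and the $k=0$ case of the theorem on each factor. The only cosmetic difference is that you verify condition \eqref{main3} directly by multiplying the scalar asymptotics $\mathrm{Tr}(V_iT_i^{1+1/t})=c_i t+o(t)$, whereas the paper inserts an extended limit $\gamma$ and invokes multiplicativity of $\gamma$ on convergent functions to establish condition \eqref{main2}; since conditions \eqref{main2} and \eqref{main3} are equivalent in Theorem \ref{main}, this is a reorganization rather than a genuinely different argument. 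Your note that the final sentence uses the polar decomposition $A_i=U_i|A_i|$ (rather than $V_i=I$, which would require $A_i\ge0$) is the correct way to specialize.
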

\begin{proof}
Since $\mathcal{I}_0=\mathcal{L}_{1,\infty}$, then by \cite[Theorem~4.2]{SUZ2017} or Theorem \ref{main}, there exist $c_0, c_1,\cdots, c_k\in\mathbb{C}$ such that
$$\lim_{t\rightarrow\infty}\frac1t \mathrm{Tr}(V_iT_i^{1+1/t})=c_i,\quad~0\leq i\leq k.$$
For every $\gamma\in EL(0,\infty)$, let $\zeta_\gamma$ be the extended zeta-function residue on $\mathcal{I}_k(H^{\otimes(k+1)}).$

Using \eqref{Vzeta} we obtain
\begin{equation*}
\begin{split}
\zeta_\gamma\Big(\bigotimes_{i=0}^k V_iT_i\Big)&=\zeta_{\gamma,\bigotimes_{i=0}^k V_i}\Big(\bigotimes_{i=0}^kT_i\Big) \\
     & =\gamma\Big( t\mapsto \frac{k+1}{t^{k+1}}\mathrm{Tr}^{\otimes(k+1)}\Big(\Big(\bigotimes_{i=0}^k V_i\Big)\Big(\bigotimes_{i=0}^k T_i\Big)^{1+1/t}\Big)\Big) \\
       & =(k+1)\gamma\Big( t\mapsto \prod_{i=0}^k\frac{1}{t}\mathrm{Tr}\Big(V_iT_i^{1+1/t}\Big)\Big).
\end{split}
\end{equation*}
Since $\gamma$ is multiplicative on the space of convergent (at $+\infty$) functions, we have
\begin{equation*}
      \zeta_\gamma\Big(\bigotimes_{i=0}^k V_iT_i\Big)=(k+1)\prod_{i=0}^kc_i,\quad\forall~\gamma\in EL(0,\infty).
\end{equation*}
By Theorem \ref{main}, we conclude that
$$\lim_{n\rightarrow\infty}\frac{1}{G_k(n+1)}\sum_{j=0}^n\lambda\Big(j,\bigotimes_{i=0}^k V_iT_i\Big)=\frac{1}{k!}\prod_{i=0}^kc_i,$$
the assertion follows.
\end{proof}

We conclude the paper with two applications of Theorem \ref{main} to deduce that certain geometric operators belongs to ideals $\mathcal I_k$ and are Dixmier-measurable.

\begin{example}
	For $n\ge2$, let $-\Delta_n$ be the Dirichlet Laplacian on domain of the form $\Omega_n = \{ x \in \mathbb R^n : |x_1 \cdot \dots \cdot x_n| <1\}.$ It follows from \cite[Theorem 6]{Camus2015} that the eigenvalue counting function $N_{-\Delta_n}$ has the following asymptotic:
	$$N_{-\Delta_n}(t) \sim c_n \cdot t^{n/2} \cdot (\log t)^{n-1}, \ t\to\infty,$$
	where $c_n = n^{n-1} \cdot \left(\Gamma(n/2) \sqrt{\pi^n} 2^{n-1} (n-1)!\right)^{-1}.$
	Thus,
	$$N_{(1-\Delta_n)^{n/2}}(t) \sim C_n \cdot t \cdot (\log t)^{n-1}, \ t\to\infty,$$
	where $C_n = \left(\Gamma(n/2) \sqrt{\pi^n} (n-1)!\right)^{-1}.$
	Since $n_{(1-\Delta_n)^{-n/2}}(t) \sim N_{(-\Delta_n)^{n/2}}(1/t)$ as $t\to 0$, then
	$$n_{(1-\Delta_n)^{-n/2}}(t) \sim C_n \cdot t^{-1} \cdot (\log t^{-1})^{n-1}, \ t\to0.$$
	Note that the asymptotic inverse of the distribution function $n_A$ is the singular value function $\mu(A)$ and that of $t \mapsto t^{-1} \cdot (\log t^{-1})^{n-1}$ is $g_{n-1}(t) = (t+2)^{-1} \cdot (\log (t+2))^{n-1}$. Therefore, the operator $(1-\Delta_n)^{-n/2}$ belongs to ideal $\mathcal I_{n-1}.$
	
	Via Karamata Tauberian theorem \cite[Chapter IV, Theorem 8.1]{Tauber} the asymptotic of $N_{(1-\Delta_n)^{n/2}}$ is equivalent to the following asymptotic of the partition function:
	$${\rm Tr}\left(e^{-t (1-\Delta_n)^{n/2}}\right) \sim C_n \cdot \frac{|\log t|^{n-1}}t, \ t\to0+.$$
	Since the operator zeta-function is the Mellin transform of the partition function, we obtain
	$${\rm Tr}\left( (1-\Delta_n)^{-s n/2} \right) = \frac1{\Gamma(s)}\int_0^\infty {\rm Tr}\left(e^{-t (1-\Delta_n)^{n/2}}\right) t^{s-1} dt \sim C_n \cdot \frac{(n-1)!}{(s-1)^n}, \ \Re(s) > 1.$$
	Now it follows from Theorem \ref{main} that the operator $(1-\Delta_n)^{-n/2}$ is Dixmier-measurable, that is all Dixmier traces coincide on it. Moreover,
	$${\rm Tr}_\omega\left( (1-\Delta_n)^{-n/2} \right) =  \lim_{n\rightarrow\infty}\frac{n+1}{ (\log(k+2))^{n+1}}\sum_{j=0}^k\lambda(j,(1-\Delta_n)^{-n/2})=C_n = \frac1{\Gamma(n/2) \sqrt{\pi^n} (n-1)!}.$$
\end{example}

In Corollary \ref{terson_product_cor} we proved that the tensor product of $k+1$ Dixmier-measurable operators from $\mathcal L_{1,\infty}$ is Dixmier-measurable in $\mathcal I_{k}$. The following example a similar idea to fractal strings.

\begin{example}
	Any bounded open set $\Omega \subset \mathbb R$ can be uniquely written as a countable disjoint union of open intervals of lengths $(l_j)_{j\ge1}$. Following \cite[Section 1]{Lapidus_Frank_book}, we call a (bounded) fractal string both the set $\Omega$ and the associated collection $\mathfrak L = (l_j)_{j\ge1}$. Without loss of generality we can suppose that the sequence $(l_j)_{j\ge1}$ is written in the non-increasing order. For a fractal string one can define the geometric zeta-function by setting
	$$\zeta_{\mathfrak L}(s) := \sum_{j=1}^\infty l_j^s, \ \Re(s)>1.$$
	On the other hand, there is a Dirichlet Laplacian $L$ associated with an open set $\Omega$. Thus, one can define the spectral zeta-function associated with a fractal string by setting
	$$\zeta_{\mathfrak L}^*(s) := \zeta_{L}(-s) = \sum_{j=1}^\infty \lambda(j, L)^{-s}, \ \Re(s)>1.$$
	By \cite[Theoreem 1.19]{Lapidus_Frank_book} these two functions are related as follows:
	$$\zeta_{\mathfrak L}^*(s) = \zeta_{\mathfrak L}(s) \zeta(s) \pi^{-s}, \ \Re(s)>1,$$
	where $\zeta(s)$ is the Riemann zeta-function.
	
	For bounded fractal strings $\mathfrak{L}_1$ and $\mathfrak{L}_2$ the tensor product $\mathfrak{L}_1 \otimes \mathfrak{L}_2 := \{a \cdot b : a\in \mathfrak{L}_1, b\in \mathfrak{L}_2\}$ is again a (bounded) fractal string and by \cite[Lemma 3.3.2]{Lapidus_R_Z_book} the following holds:
	$$\zeta_{\mathfrak{L}_1 \otimes \mathfrak{L}_2}(s) = \zeta_{\mathfrak{L}_1}(s) \cdot \zeta_{\mathfrak{L}_2}(s),$$
	for such $s \in \mathbb C$ that both functions on the right are defined.
	
	Let $\mathfrak L_1$ be the usual Cantor string (of length 1), that is $\mathfrak L_1 = (1/3, 1/9, 1/9, \dots)$ and let $\mathfrak L(1/3, 1/3) := \{3^{-\alpha} : \alpha \in \mathbb N_0\}$ considered as a multiset. Define recursively
	$$\mathfrak L_{n+1} := \mathfrak L_n \otimes \mathfrak L(1/3, 1/3), \ n \ge 1.$$
	By \cite[Example 3.3.7]{Lapidus_R_Z_book} geometric zeta-function $\zeta_{\mathfrak L_{n+1}}$ extends meromorphically to the whole complex plane with poles of order $n+1$ located at $\frac{\log 2}{\log 3} + \frac{2\pi i}{\log 3} \mathbb Z.$  Let $L_{n+1}$ be the Dirichlet Laplacian associated with the fractal string $\mathfrak L_{n+1}$. Then, for $d=\log 2/\log 3$ the zeta-function of an operator $L_{n+1}^{-d}$ extends meromorphically to the whole complex plane with poles of order $n+1$ located at $1 + \frac{2\pi i}{\log 2} \mathbb Z.$ By Theorem \ref{main} the operator $L_{n+1}^{-d}$ is Dixmier-measurable.
\end{example}

\providecommand{\bysame}{\leavevmode\hbox to3em{\hrulefill}\thinspace}
\providecommand{\MR}{\relax\ifhmode\unskip\space\fi MR }
\providecommand{\MRhref}[2]{%
	\href{http://www.ams.org/mathscinet-getitem?mr=#1}{#2}
}
\providecommand{\href}[2]{#2}

\end{document}